\newtheorem{theorem}{Theorem}[section]
\newtheorem{cor}{Corollary}[section]
\newtheorem{lemma}{Lemma}[section]
\newtheorem*{claim*}{Claim}
\newtheorem*{proofclaim*}{Proof of Claim}
\theoremstyle{theorem}
\newtheorem*{remark}{Remark}
\newtheorem*{Exercise 11.3.B}{Exercise 11.3.B}
\numberwithin{equation}{section}
\newcommand{\p}{\textbf{Proof. }}
\newcommand{\n}{\mathfrak{n}}
\renewcommand{\pmod}[1]{\,(\textup{mod}\,#1)}
\begin{document}
\title[ Primes \( p \) such that \( p - b \) Has a Large Power Factor and Few Other Prime Divisors]
	{  Primes \( p \) such that \( p - b \) Has a Large Power Factor and Few Other Prime Divisors}
	\author{Likun Xie}
	\address{Department of Mathematics, University of Illinois, 1409 West Green
		Street, Urbana, IL 61801, USA} 
	\email{likunx2@illinois.edu}

\begin{abstract} 
We prove lower bounds for the number of primes \( p \leq N + b \) such that \( p - b \) is divisible by \( 2^{k(N)} \) and has at most \( k \) odd prime factors (\( k \geq 2 \)), assuming \( 2^{k(N)} \leq N^\theta \) for some \( \theta > 0 \) depending on \( k \). The proof uses a variant of Chen's method, weighted sieves, and Elliott's results on primes in arithmetic progressions with large power-factor moduli.

\end{abstract}

	\keywords{Sieve methods, prime numbers, large power factors}
	\subjclass{11N35, 11N25, 11N36}
	\maketitle

	\section{Introduction}\label{sec: Introduction}
Our motivating question arises from a problem posed by Hooley \cite[p.~109]{hooley}, concerning the number of primes in sequences of the form \( a^n + b \), or equivalently, the number of primes \( p \leq x \) such that
\[
p - b = a^n.
\]
In particular, Hooley presents the following result \cite[p.~113]{hooley}.

\begin{theorem}
	Let \( b > 1 \) be an odd integer, and define \( \pi_b(x) \) to be the number of primes of the form \( 2^n + b \) with \( n \leq x \). Then
	\[
	\pi_b(x) = o(x)
	\]
	provided both the Extended Riemann Hypothesis and Hypothesis~A \cite[p.~112]{hooley} hold.
\end{theorem}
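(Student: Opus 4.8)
The plan is to bound $\pi_b(x)$ from above by sieving in the exponent variable $n$. If $2^n+b$ is prime and exceeds $z$, it has no prime factor below $z$; since $b$ is odd, $2^n+b$ is odd, so, separating off the $O(\log x)$ values of $n$ with $2^n+b\le z$,
\[
\pi_b(x)\le \#\{\,n\le x:\ q\nmid 2^n+b\ \text{for every odd prime }q<z\,\}+O(\log x).
\]
For a fixed odd prime $q$, the congruence $q\mid 2^n+b$ is equivalent to $2^n\equiv -b\pmod q$; as $n$ runs over a period, this has no solution when $-b$ is not in the subgroup $\langle 2\rangle$ of $(\mathbb{Z}/q\mathbb{Z})^\times$, and exactly one solution $n\pmod{\operatorname{ord}_q(2)}$ when $-b\in\langle 2\rangle$. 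Writing $\varepsilon_q=1$ in the latter case and $\varepsilon_q=0$ otherwise, the local density of surviving $n$ at $q$ is $1-\varepsilon_q/\operatorname{ord}_q(2)$, and a Brun- or Selberg-type upper-bound sieve should yield
\[
\pi_b(x)\ \ll\ x\prod_{q<z}\Bigl(1-\frac{\varepsilon_q}{\operatorname{ord}_q(2)}\Bigr)
\]
for a suitable sifting level $z=z(x)\to\infty$.

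It then suffices to prove $\sum_{q<z}\varepsilon_q/\operatorname{ord}_q(2)\to\infty$, so that the product tends to $0$. Writing $\operatorname{ord}_q(2)=(q-1)/i_q$, where $i_q$ is the index of $\langle 2\rangle$, the summand is $\varepsilon_q i_q/(q-1)$; conditioning on $i_q$ and using that, for a fixed index $i$, the element $-b$ lies in the unique subgroup of index $i$ with ``probability'' $1/i$, one expects $\sum_{q<z}\varepsilon_q/\operatorname{ord}_q(2)\sim\sum_{q<z}1/(q-1)\sim\log\log z$, whence the product is $\ll(\log z)^{-1}$ and $\pi_b(x)=o(x)$. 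This is where the Extended Riemann Hypothesis enters: one needs the Chebotarev density theorem, with a power-saving error term, for the joint conditions ``$\operatorname{ord}_q(2)=(q-1)/i$'' and ``$-b\in\langle 2\rangle\pmod q$'' in the Kummer-type fields $\mathbb{Q}(\zeta_{\ell i},\,2^{1/\ell i},\,(-b)^{1/\ell})$, summed over $\ell$, together with a crude bound showing that the primes with very small $\operatorname{ord}_q(2)$ --- for which $\varepsilon_q$ is almost always $0$ --- contribute only $O(1)$ to the sum.

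The main obstacle, and the reason Hypothesis~A is invoked alongside ERH, is the level of distribution of the sequence $(2^n+b)_{n\le x}$. The error in $\#\{n\le x:\ d\mid 2^n+b\}$ is governed not by $\prod_{q\mid d}\operatorname{ord}_q(2)$ but by the (possibly far smaller) least common multiple of the $\operatorname{ord}_q(2)$ with $q\mid d$; consequently the local densities are not genuinely multiplicative, and the remainder sum $\sum_{d<z}|r_d|$ is not visibly $o(x/\log x)$ without knowing that multiplicative orders of $2$ are typically large and ``typically coprime''. Hypothesis~A supplies exactly this control --- equivalently, the required GRH-type input for the attached family of Kummer fields --- legitimizing a sieve of level $z=x^{\delta}$ for some fixed $\delta=\delta(b)>0$; alternatively, one restricts the sifting primes to those $q$ with $\operatorname{ord}_q(2)$ large and absorbs the discrepancy between lcm and product into the remainder. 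With both inputs in hand the argument closes, and in fact gives the quantitative bound $\pi_b(x)\ll x/\log x$.
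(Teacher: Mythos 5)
First, a point of reference: the paper does not prove this statement. It is Hooley's theorem, quoted verbatim from \cite[p.~113]{hooley} purely as motivation for the results the paper actually establishes, so there is no in-paper proof to measure your proposal against; what follows assesses the proposal on its own terms.

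Your outline has the right skeleton --- sieve in the exponent $n$, local density $\varepsilon_q/\operatorname{ord}_q(2)$ at an odd prime $q$, and divergence of $\sum_{q<z}\varepsilon_q/\operatorname{ord}_q(2)$ via Chebotarev in Kummer fields under ERH --- but it is not a proof. The decisive step, the upper bound $\pi_b(x)\ll x\prod_{q<z}\bigl(1-\varepsilon_q/\operatorname{ord}_q(2)\bigr)$, is only asserted (``should yield''), and you yourself name the obstruction that blocks it: the sifting density is not multiplicative, because $\#\{n\le x:\ d\mid 2^n+b\}$ is governed by $\operatorname{lcm}_{q\mid d}\operatorname{ord}_q(2)$ rather than by $\prod_{q\mid d}\operatorname{ord}_q(2)$, so neither Brun's nor Selberg's sieve applies off the shelf. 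Delegating the repair to ``Hypothesis~A supplies exactly this control'' is circular unless you state Hypothesis~A and verify that it does; as written you are guessing at the content of the very hypothesis your argument is conditional on. Note also that the stated conclusion $o(x)$ does not require a full upper-bound sieve: it suffices to show that $f(n)=\#\{q\le z:\ \varepsilon_q=1,\ q\mid 2^n+b\}$ is positive for almost all $n\le x$, which a Tur\'an-type first- and second-moment computation delivers using only pairwise correlations --- a much lighter demand on the joint distribution than multiplicativity at all square-free moduli, and (to the best of my recollection) closer to how Hooley actually argues. Finally, your closing claim of the stronger bound $\pi_b(x)\ll x/\log x$ is supported by nothing in the sketch and overshoots the theorem as stated.
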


	While Hooley's original problem remains unsolved, we prove several related results concerning the existence of infinitely many primes \( p \) such that 
	\[
	p - b \text{ is divisible by a large  power factor } a^n 
	\]
	and has very few other divisors other than $a$. 
For simplicity, we consider the case where \( b \) is odd and \( a = 2 \), though the argument extends to any integer \( a \geq 2 \). We use an elementary variant of Chen’s theorem \cite{chen} and linear weighted sieve methods \cite{jurkat-Richert, halberstam}. In place of the classical Bombieri--Vinogradov theorem, we employ Elliott’s results \cite{prime_power} on the distribution of primes in arithmetic progressions with large power-factor moduli.

\begin{theorem}[{\cite[Theorem]{prime_power}}]\label{elliott}
	Let \( a \geq 2 \) be an integer. For any \( A > 0 \), there exists a constant \( B > 0 \) such that
	\[
	\sum_{\substack{d \leq x^{1/2} q^{-1} (\log x)^{-B} \\ (d, q) = 1}} \max_{\substack{(r, qd) = 1}} \max_{y \leq x} \left| \pi(y, qd, r) - \frac{\operatorname{li}(y)}{\varphi(qd)} \right| \ll \frac{x}{\varphi(q)(\log x)^A}
	\]
	uniformly for moduli \( q \leq x^{1/3} \exp\left( -(\log \log x)^3 \right) \) that are powers of \( a \).
	
	The implied constant may depend on \( a \), while \( B \) depends only on \( A \); in particular, one may take \( B = A + 6 \).
\end{theorem}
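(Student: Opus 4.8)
The plan is to reduce Theorem~\ref{elliott} to a Bombieri--Vinogradov estimate for the thin sequence of primes lying in a fixed residue class modulo $q=a^{k}$, and then to prove that estimate by opening up the classical proof of Bombieri--Vinogradov --- Vaughan's identity, the large sieve, Siegel--Walfisz, and Gallagher's device for $\max_{y}$ --- while exploiting that there are only $O(\log x)$ powers of $a$ below $x^{1/3}$.

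By partial summation and the passage from $\pi$ to $\psi(y;m,r)=\sum_{n\le y,\ n\equiv r\,(m)}\Lambda(n)$, it suffices to show
\[
\sum_{\substack{d\le D\\ (d,q)=1}}\ \max_{(r,qd)=1}\ \max_{y\le x}\ \Bigl|\psi(y;qd,r)-\frac{y}{\varphi(qd)}\Bigr|\ \ll\ \frac{x}{\varphi(q)(\log x)^{A}},\qquad D:=\frac{x^{1/2}}{q(\log x)^{B}}.
\]
Expanding modulo $qd$ and using $(d,q)=1$ to write every character as $\chi_{q}\chi_{d}$, the part of the expansion with $\chi_{q}$ principal collapses, up to errors summing to $\ll x/(\varphi(q)(\log x)^{A})$, to $\varphi(q)^{-1}\bigl(\psi(y;d,r)-y/\varphi(d)\bigr)$; summed over $d\le D\le x^{1/2}(\log x)^{-B}$ this is $\ll x/(\varphi(q)(\log x)^{A})$ by the ordinary Bombieri--Vinogradov theorem, and this is precisely where the gain $\varphi(q)^{-1}$ comes from. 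The entire difficulty is thus concentrated in the part of the expansion with $\chi_{q}$ \emph{non-principal}, where one must control
\[
\frac{1}{\varphi(q)}\sum_{\substack{\chi_{q}\bmod q\\ \chi_{q}\ne\chi_{0}}}\ \sum_{\substack{d\le D\\(d,q)=1}}\frac{1}{\varphi(d)}\sum_{\chi_{d}\bmod d}\ \max_{y\le x}\bigl|\psi(y,\chi_{q}^{*}\chi_{d}^{*})\bigr|,
\]
$\chi_{q}^{*},\chi_{d}^{*}$ denoting the primitive characters inducing $\chi_{q},\chi_{d}$. The subtlety is that this sum has $\varphi(q)-1\asymp q$ terms in $\chi_{q}$, whereas applying ordinary Bombieri--Vinogradov to each twisted sequence $\Lambda(n)\chi_{q}^{*}(n)$ separately only gives $x(\log x)^{-A}$ per term --- too large by a factor $q$.

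To recover that factor $q$ I would run Vaughan's identity on $\Lambda$, split into type-I linear sums (handled by the large-sieve mean value for short character sums together with P\'olya--Vinogradov) and type-II bilinear sums, and observe that once the primitive $\chi_{q}^{*}$ of conductor $e_{q}\mid q$ is fixed the remaining large sieve runs over the primitive characters $\chi_{d}^{*}$ of conductor $\le D\asymp x^{1/2}/q$ only; for the bilinear sums whose two variables have roughly balanced length $x^{1/2}$ the large sieve inequality then wins back a factor $q$, because its modulus-squared term $D^{2}$ dominates the variable length $x^{1/2}$ --- that is, for $q$ up to about $x^{1/4}$. Reassembling the primitive characters $\chi_{q}^{*}\chi_{d}^{*}$ modulo $e_{q}e_{d}$ (there being only $O(\log x)$ choices of $e_{q}\mid a^{k}$), invoking Siegel--Walfisz for conductors below $(\log x)^{C}$ and Gallagher's device for $\max_{y}$, one then reaches the bound; the admissible value $B=A+6$ comes out of the accumulated powers of $\log x$ --- in the large sieve, in the passage to primitive characters, and in the conversion $\operatorname{li}(y)\leftrightarrow y$.

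The main obstacle, and what makes Elliott's argument genuinely more than a rerun of the textbook proof, is pushing this from $q\le x^{1/4}$ up to $q\le x^{1/3}\exp(-(\log\log x)^{3})$. Once $q$ exceeds $x^{1/4}$ one of the two variables in the balanced bilinear sum outgrows $D^{2}$, the large sieve no longer returns the full factor $q$, and the deficit must be recovered from a zero-density estimate for the $L$-functions $L(s,\chi_{q}^{*}\chi_{d}^{*})$ --- a log-free large-sieve density theorem of Gallagher type --- which is affordable here precisely because the prime powers $q=a^{k}$ are a sparse family, and which still delivers a power saving as long as $q\le x^{1/3}$; the cube-root exponent is exactly the threshold at which the density exponent for this family (which one can take to be about $3$) ceases to give that saving, and the factor $\exp(-(\log\log x)^{3})$ is a technical cushion absorbing the secondary terms of the density estimate and of the explicit formula. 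Everything else --- the character factorisation, the collapse of the $\chi_{q}$-principal part onto ordinary Bombieri--Vinogradov, the large sieve in the $\chi_{d}$-aspect, and Gallagher's supremum device --- is routine once this density input is in hand.
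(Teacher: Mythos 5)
First, a structural point: the paper does not prove Theorem~\ref{elliott} at all --- it is quoted verbatim from Elliott \cite{prime_power}, and the only in-paper commentary is that the admissible range \( q \leq x^{1/3} \exp(-(\log\log x)^3) \) traces back to the zero-density estimate of Lemma~\ref{zero_lemma} with density exponent \( \lambda < 3 \) (so that substituting \( \lambda < 5/2 \) yields the range \( x^{2/5} \) of Theorem~\ref{guo}). Your sketch is consistent with that commentary: the factorisation of characters mod \( qd \) as \( \chi_q\chi_d \), the collapse of the \( \chi_q \)-principal part onto the classical Bombieri--Vinogradov theorem with the gain \( \varphi(q)^{-1} \), and the identification of the threshold \( x^{1/\lambda} \) as coming from a density estimate with \( \lambda \approx 3 \) are the right skeleton, and the last point is exactly what the paper itself emphasizes.

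As a proof, however, the proposal has genuine gaps at precisely the two places where the work lives. (i) In the non-principal range you assert that the large sieve ``wins back a factor \( q \)'' for balanced bilinear sums when \( q \leq x^{1/4} \); but the relevant family is \( \{\chi_q^{*}\chi_d^{*}\} \) with conductors up to \( qD = x^{1/2}(\log x)^{-B} \), and the off-the-shelf large sieve for primitive characters of conductor \( \leq Q \) contributes \( (M + Q^2) \) with \( Q = qD \), i.e.\ \( q^2D^2 \), which is a factor \( q \) too large. What is needed --- and not supplied --- is a large-sieve inequality adapted to characters whose modulus contains the fixed divisor \( q \), exploiting that this family has only \( \asymp qD^2 \) members rather than \( \asymp q^2D^2 \). (ii) For the range beyond \( x^{1/4} \) you invoke a log-free density theorem, but the quantitative deduction is missing: with \( q \) at the threshold \( x^{1/\lambda}\exp(-(\log\log x)^3) \), the quantity \( (q d T)^{\lambda}/x \) is smaller than \( 1 \) only by roughly \( \exp(-c(\log\log x)^3) \), and converting that into a saving of an arbitrary power of \( \log x \) in \( \sum_{\rho} x^{\beta - 1} \) requires a careful interplay between the density estimate, the zero-free region, and possible exceptional zeros of \( L(s, \chi_q^{*}\chi_d^{*}) \) --- none of which is carried out, and it is exactly here that the factor \( \exp(-(\log\log x)^3) \) must be shown to do its job. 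Since these are the steps that separate Elliott's theorem from the textbook Bombieri--Vinogradov theorem, your proposal should be regarded as a plausible roadmap rather than a proof; for the purposes of this paper the correct move is simply to cite \cite{prime_power}, as the authors do.
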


We briefly explain the origin of the bound on the modulus \( q \).
For any modulus \( D \), \( 0 < \alpha \leq 1 \), and \( T \geq 0 \), let \( N(\alpha, T, D) \) denote the number of zeros, counted with multiplicity, of all functions \( L(s, \chi) \) formed with characters \( \chi \pmod{D} \) that lie in the rectangle
\[
\alpha \leq \Re(s) \leq 1, \quad |\Im(s)| \leq T.
\]
The bound for the modulus \( q \leq x^{1/3} \exp\left( -(\log \log x)^3 \right) \) essentially follows from the following upper bound on \( N(\alpha, T, D) \):

\begin{lemma}[{\cite[Lemma 5]{prime_power}}]\label{zero_lemma}
	There are positive constants \( \lambda < 3 \), \( c_2 \), and \( c_3 \) such that
	\[
	N(\alpha, T, D) \leq c_2 (D T)^{\lambda(1 - \alpha)} (\log D T)^{c_3}
	\]
	uniformly for \( 0 \leq \alpha \leq 1 \) and \( T \geq 2 \).
\end{lemma}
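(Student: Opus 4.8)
The plan is to deduce the lemma from the classical zero-density estimates for Dirichlet $L$-functions — concretely from Huxley's $\tfrac{12}{5}$-density theorem — in the form appropriate to a \emph{single} modulus $D$ summed over all of its characters. The structural point is that, since in our setting $D$ is a power of the fixed base $a$ and therefore has a bounded number of prime factors, all the analytic inputs (the hybrid fourth moment of $L$ on the critical line, the Halász--Montgomery large-value estimates for Dirichlet polynomials) stay \emph{log-free} and linear, rather than quadratic, in $D$; so it is $(DT)$ — not $(D^2T)$ or $(DT)^{1+\varepsilon}$ — that governs the bound, and the exponent stays strictly below $3$.

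First I would dispose of $0 \le \alpha \le 1/2$ by the trivial count: summing the Riemann--von Mangoldt formula over the $\varphi(D)$ characters modulo $D$ gives $N(\alpha, T, D) \le N(0, T, D) \ll DT\log DT$, and $DT\log DT \le c_2(DT)^{\lambda(1-\alpha)}(\log DT)^{c_3}$ whenever $1-\alpha \ge 1/2$, provided $\lambda \ge 2$ and $c_3 \ge 1$. So it remains to treat $1/2 \le \alpha \le 1$.

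For $1/2 \le \alpha \le 1$ I would run Huxley's argument. Introduce the zero-detecting mollifier $M_Y(s,\chi) = \sum_{m \le Y}\mu(m)\chi(m)m^{-s}$, so that $L(s,\chi)M_Y(s,\chi) = \sum_n b_n\chi(n)n^{-s}$ with $b_1 = 1$, $b_n = 0$ for $1 < n \le Y$, and $|b_n| \le \tau(n)$. A standard contour shift, together with the approximate functional equation to handle the resulting critical-line integral, reduces matters to counting those zeros $\rho = \beta + i\gamma$ of $L(s,\chi)$ (with $\beta \ge \alpha$, $|\gamma| \le T$, and over all $\chi \pmod D$) at which some dyadically localized Dirichlet polynomial $F_N(s,\chi) = \sum_{N < n \le 2N}b_n\chi(n)n^{-s}$, of length $N$ with $Y \le N \le (DT)^{1/2}Y$, satisfies $|F_N(\rho,\chi)| \gg (\log DT)^{-1}$. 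This I would do by Gallagher's spaced-points lemma — thinning the ordinates to be $\gg 1$-separated, which is legitimate since a fixed $L(s,\chi)$ has $\ll \log DT$ zeros per unit interval — combined with the Halász--Montgomery large-value method, fed by the hybrid fourth moment $\int_{-T}^{T}\sum_{\chi \pmod D}|L(\tfrac12 + it,\chi)|^4\,dt \ll DT(\log DT)^{O(1)}$ (which is valid, and log-free, precisely because $\omega(D) = O(1)$). Optimizing $Y$ and the dyadic ranges as powers of $DT$ then yields
\[
N(\alpha, T, D) \ll (DT)^{\frac{12}{5}(1-\alpha)}(\log DT)^{c_3} \qquad (1/2 \le \alpha \le 1),
\]
and taking $\lambda = \tfrac{12}{5} < 3$ (after enlarging $c_3$ to absorb the range $\alpha \le 1/2$ as well) gives the lemma.

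The step I expect to be the main obstacle is keeping the whole zero-density machinery \emph{log-free and linear in $D$}: the imprimitive characters modulo $D$ contribute Euler-factor corrections that for a generic $D$ would cost a factor $D^{\varepsilon}$, and such a loss is fatal for $\alpha$ near $1$, where $(DT)^{\lambda(1-\alpha)}$ is bounded — so one genuinely exploits $\omega(D) = O(1)$ to keep these corrections $O(1)$, and likewise uses it to make the fourth moment and the large-value estimates log-free. After that one must glue Huxley's bound on $[1/2,1]$ to the trivial bound on $[0,1/2]$ into a single estimate uniform in $\alpha$, $T \ge 2$ and $D$, with one $\lambda < 3$ and one $c_3$. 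It is precisely this sub-$3$ exponent that, inserted into the usual explicit-formula/large-sieve derivation of a Bombieri--Vinogradov-type estimate, produces the admissible range $q \le x^{1/3}\exp(-(\log\log x)^3)$ recorded in Theorem~\ref{elliott}.
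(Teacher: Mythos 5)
First, a point of comparison: the paper does not prove this lemma. It is imported wholesale from Elliott \cite[Lemma 5]{prime_power}, and the sentences immediately following it in the paper tell you which classical zero-density theorems supply it --- Montgomery \cite[p.~98]{montgomery} with \( \lambda = \frac52 \) and \( c_3 = 14 \), Jutila \cite{jutlia}, or Heath-Brown \cite{heathbrown} with \( c_3 = 0 \) and any fixed \( \lambda > \frac{12}{5} \). So there is no in-paper proof to measure your sketch against; you are outlining a proof of a cited classical result. The shape of your outline is the standard one and is fine: the range \( 0 \le \alpha \le \frac12 \) does follow from the Riemann--von Mangoldt count \( N(0,T,D) \ll DT\log DT \) once \( \lambda \ge 2 \), and the range \( \alpha \ge \frac12 \) is the usual zero-detection argument (mollifier, Hal\'asz--Montgomery large values, hybrid fourth moment). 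Since any \( \lambda < 3 \) suffices here, the cleanest course is simply to quote Montgomery's Theorem 12.1 rather than to re-derive a \( \frac{12}{5} \)-type exponent.

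The substantive defect is your structural premise that the bound ``genuinely exploits \( \omega(D) = O(1) \)'' because ``in our setting \( D \) is a power of the fixed base \( a \).'' The lemma carries no such restriction --- it is asserted for an arbitrary modulus \( D \), uniformly --- and the classical versions cited above hold in that generality, so a proof valid only for prime-power \( D \) establishes a weaker statement than the one in question. Moreover, the difficulty you locate there is not real: for \( \alpha > 0 \), an imprimitive \( L(s,\chi) \) has exactly the same zeros in \( \Re(s) \ge \alpha \) as the primitive \( L(s,\chi^{*}) \) inducing it, because the extra Euler factors \( 1 - \chi^{*}(p)p^{-s} \) vanish only on the line \( \Re(s) = 0 \); the ``Euler-factor corrections'' therefore never enter the zero count at all, and the only regime where they could (\( \alpha \) near \( 0 \)) is already absorbed by your trivial bound. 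The genuine point of care in the reduction is elsewhere: if one counts zeros by passing to primitive characters of each conductor \( f \mid D \) and applying a density theorem conductor by conductor, the sum over \( f \mid D \) can cost a divisor-function factor \( d(D) \), which is fatal for \( \alpha \) near \( 1 \) where \( (DT)^{\lambda(1-\alpha)} \) is bounded; the classical treatments avoid this by running the large-value argument over the full set of characters to the single modulus \( D \) at once. Replacing your \( \omega(D) \) paragraph with this observation (or simply with a citation to Montgomery's Theorem 12.1) closes the gap.
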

As noted in \cite{prime_power}, one may   use different versions of this type of estimate—for instance, Montgomery's version \cite[p.~98]{montgomery}, which gives \( \lambda = \frac{5}{2} \) and \( c_3 = 14 \). Improvements include a result of Jutila \cite{jutlia}, who obtained \( \lambda = \frac{3(9 + \sqrt{17})}{16} \), and a version by Heath-Brown \cite{heathbrown}, which allows \( c_3 = 0 \) and any fixed \( \lambda > \frac{12}{5} \).
 Following the same method as in Elliott \cite{prime_power} leads to various upper bounds for \( q \).

Here, we adopt the version with \( \lambda < \frac{5}{2} \), which yields the modulus range \( q \leq x^{2/5} \exp\left( -(\log \log x)^3 \right) \), and apply it in our error estimates for the sieve.\footnote{This result appears in an article published in \emph{Advances in Pure Mathematics}, which we do not cite due to concerns about the journal’s reliability. As noted above, the bound essentially follows from Elliott \cite{prime_power}, using Lemma~\ref{zero_lemma} with \( \lambda < \frac{5}{2} \).}

\begin{theorem} \cite{prime_power}\label{guo}
Let \( a \geq 2 \) be an integer. If \( A > 0 \), then there exists a constant \( B = B(A) > 0 \) such that
\[
\sum_{\substack{d \leq x^{1/2} q^{-1} (\log x)^{-B} \\ (d, q) = 1}} \max_{\substack{(r, qd) = 1}} \max_{y \leq x} \left| \pi(y, qd, r) - \frac{\operatorname{li}(y)}{\varphi(qd)} \right| \ll \frac{x}{\varphi(q)(\log x)^A}
\]
holds uniformly for moduli \( q \leq x^{2/5} \exp\left(-(\log \log x)^3\right) \) that are powers of \( a \).

\end{theorem}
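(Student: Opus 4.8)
The plan is to rerun Elliott's proof of Theorem~\ref{elliott} essentially verbatim, changing only the zero-density estimate that is fed into it, and then to track how the admissible range of $q$ depends on the exponent $\lambda$. Recall the skeleton of Elliott's argument. By orthogonality of Dirichlet characters and partial summation, the left-hand side is majorised by
\[
\frac{1}{\varphi(q)}\sum_{\substack{d\le x^{1/2}q^{-1}(\log x)^{-B}\\ (d,q)=1}}\frac{1}{\varphi(d)}\sum_{\substack{\chi\bmod qd\\ \chi\ne\chi_0}}\max_{y\le x}\bigl|\psi(y,\chi)\bigr|\;+\;(\text{elementary lower-order terms}),
\]
and each $\chi\bmod qd$ is resolved as $\chi=\chi_{q'}\chi_{d'}$ with $\chi_{q'}$ primitive to a modulus $q'\mid q$ and $\chi_{d'}$ primitive to a modulus $d'\mid d$. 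Since $q$ is a power of $a$ there are only $O(\log q)$ possible $q'$, and after interchanging summations the primitive characters that occur have conductors $q'd'\le qD_0=x^{1/2}(\log x)^{-B}$, each weighted by $\ll(\log x)/\varphi(d')$ coming from $\sum_{d\le D_0,\,d'\mid d}\varphi(d)^{-1}$. One then splits according to the size of the conductor: in the range where many characters are available one uses a Vaughan-type decomposition together with the large sieve exactly as in the Bombieri--Vinogradov theorem, while in the complementary range the character is, up to a small-conductor twist, one of the $O(\varphi(q))$ characters modulo (a divisor of) $q$, and since the large sieve is ineffective over a single modulus one passes instead to the explicit formula $\psi(y,\chi^{*})=-\sum_{|\gamma|\le T}y^{\rho}/\rho+O\bigl(yT^{-1}(\log qy)^{2}\bigr)$ and estimates a weighted sum of $y^{\beta}$ over the zeros $\rho=\beta+i\gamma$. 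This last step is the \emph{only} place where the hypothesis on $q$ is used, and it enters through the zero-density bound of Lemma~\ref{zero_lemma}.

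Next I would carry out the zero-sum estimate with $N(\alpha,T,D)\ll(DT)^{\lambda(1-\alpha)}(\log DT)^{c_3}$ inserted. Zeros in the classical effective zero-free region, including the possible Siegel zero, are controlled by Siegel's theorem and contribute $\ll x\varphi(q)^{-1}(\log x)^{-A}$ with an ineffective constant, exactly as in the Siegel--Walfisz step of Bombieri--Vinogradov. For the remaining zeros with $\beta\le 1-c/\log x$, integrating by parts in $\alpha$ against $dN(\alpha,T,D)$ and taking $T$ a suitable power of $\log x$, the contribution is $\ll x\varphi(q)^{-1}(\log x)^{-A}$ as soon as $q$ does not exceed, up to a factor $\exp(-(\log\log x)^{3})$ that absorbs the accumulated powers of $\log x$ and $\log\log x$, roughly $x^{1/\lambda}$ --- this is precisely the balance that turns the exponent $\lambda<3$ of Lemma~\ref{zero_lemma} into the range $q\le x^{1/3}\exp(-(\log\log x)^{3})$ of Theorem~\ref{elliott} (and, consistently, Heath-Brown's $\lambda>\tfrac{12}{5}$ would give $x^{5/12}$). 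Feeding in instead the variant of Lemma~\ref{zero_lemma} with $\lambda<\tfrac{5}{2}$ discussed above --- valid uniformly in the same ranges of $\alpha$ and $T$, and available e.g.\ from Jutila's or Heath-Brown's density estimates --- the identical computation yields $q\le x^{1/\lambda}\exp(-(\log\log x)^{3})$, hence in particular $q\le x^{2/5}\exp(-(\log\log x)^{3})$ since $1/\lambda>2/5$.

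The remaining work is bookkeeping: I would check that the Vaughan/large-sieve treatment of the $d'$-aspect still closes with $d'\le x^{1/2}q^{-1}(\log x)^{-B}$ over the enlarged range of $q$, that the handling of $\max_{y\le x}$ (via the uniformity of the explicit formula in $y$, or Gallagher's device) is unaffected, and that some admissible $B=B(A)$ --- Elliott's $B=A+6$ or a minor variant --- still works, which is all Theorem~\ref{guo} asks for. The main obstacle is not conceptual but a matter of care: one must verify that no step of Elliott's argument other than the invocation of Lemma~\ref{zero_lemma} silently relies on $q\le x^{1/3}$, and that the interplay among the length $x^{1/2}q^{-1}$ of the $d$-sum, the size of $q$, the height $T$, and the exponent $\lambda$ remains consistent throughout. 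Since the dependence of the final range on the zero-density exponent is simply $q\le x^{1/\lambda}\exp(-(\log\log x)^{3})$, the case $\lambda<\tfrac{5}{2}$ follows mutatis mutandis.
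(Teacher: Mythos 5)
Your proposal takes exactly the route the paper itself relies on: Theorem~\ref{guo} is not proved in detail here, but is justified by the remark (and accompanying footnote) that one reruns Elliott's argument from \cite{prime_power} verbatim, replacing Lemma~\ref{zero_lemma} by a zero-density estimate with exponent \( \lambda < \tfrac{5}{2} \), so that the admissible range \( q \leq x^{1/\lambda}\exp(-(\log\log x)^3) \) widens to \( q \leq x^{2/5}\exp(-(\log\log x)^3) \). Your sketch of Elliott's structure (character factorisation using that \( q \) is a power of \( a \), large sieve in the \( d \)-aspect, explicit formula plus zero-density for the characters of conductor dividing \( q \)) and of where the hypothesis on \( q \) enters is consistent with that, so the proposal is correct and essentially identical in approach.
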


An application of a linear weighted sieve yields the following result, proved in Section~\ref{section_weighted_sieve}.
\begin{theorem}\label{intro_theorem_s3}
Let \( b > 0 \) be an odd integer, and let \( \epsilon > 0 \) be a small fixed constant. Suppose \( k: \mathbb{N} \to \mathbb{N} \) is an arithmetic function satisfying
\[
2^{k(N)} \leq N^{a - \epsilon}, \quad \text{where } a = \frac{1}{6}.
\]

Let \( s_3(N  ) \) denote the number of primes \( p \leq N + b \) such that \( p - b \) is divisible by \( 2^{k(N)} \) and has at most three odd prime factors.

Define \( N' := N / 2^{k(N)} \). Then for sufficiently large $N$,  
\[s_3(N   ) \geq \frac{40\theta}{\left( \frac{2}{5} + 6\theta \right)^2} \log 3 \cdot \prod_{p > 2} \left(1 - \frac{1}{(p - 1)^2} \right) \prod_{2 < p \mid b} \frac{p - 1}{p - 2} \cdot \frac{N'}{(\log N)(\log N')},\]
where
\[
\theta = \frac{\epsilon}{2(1 - a + \epsilon)(1 - a)}.
\]

\end{theorem}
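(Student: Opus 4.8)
The plan is to set up a weighted linear sieve in the style of Chen's theorem, sieving the sequence $\mathcal{A} = \{ p - b : p \le N+b,\ p \equiv b \pmod{2^{k(N)}} \}$, or equivalently, writing $p - b = 2^{k(N)} m$ with $m \le N'$, sieving the shifted-prime-type sequence $\{ m \}$ over odd primes. First I would record that Theorem~\ref{guo} is exactly the level-of-distribution input needed: for a power-factor modulus $q = 2^{k(N)} \le N^{a-\epsilon}$ with $a = 1/6$, the primes $p \le N+b$ in the progression $r \pmod{q d}$ are equidistributed over residues on average over $d \le x^{1/2} q^{-1} (\log x)^{-B}$. Converting this into a usable level $D = x^{\alpha}$ for the sieve variable $d$ running over products of odd primes, one gets $D = N^{1/2 - (a - \epsilon) - o(1)} = N^{1/3 + \epsilon - o(1)}$ up to the $\log$ loss; more precisely the exponent is $\tfrac25 \cdot \tfrac12$ scaled — I would track the constant $\theta = \frac{\epsilon}{2(1-a+\epsilon)(1-a)}$ carefully, since it is precisely the gain in the exponent of the level coming from the slack $\epsilon$ and the $2/5$ in Theorem~\ref{guo}, and it is what appears in the final constant.

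Next I would invoke the Jurkat--Richert / Halberstam--Richert linear weighted sieve (as in \cite{jurkat-Richert, halberstam}), with dimension $\kappa = 1$ and the sifting function $F, f$ of the linear sieve, applied with sifting parameter $z = D^{1/u}$ for a suitable $u$. The weighted sieve gives a lower bound for the number of $p \le N+b$ such that $m = (p-b)/2^{k(N)}$ has all prime factors $> z$ and, via the Buchstab-type weight $\sum_{z \le q < D^{1/2}} (1 - \tfrac{\log q}{\log D^{1/2}})$-removal, at most $R$ prime factors where $R$ is determined by how large $D^{1/2}/z$ can be taken. The target "at most three odd prime factors" forces the relation between $u = \log D / \log z$ and $R = 3$: roughly one needs $z \ge N^{1/(R+1) \cdot (\text{level exponent})}$ so that a product of four primes each $> z$ would exceed $N'$, hence is impossible. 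Choosing $z = (N')^{\beta}$ for the optimal $\beta$ and plugging into the linear sieve functions $F(u)$ at the relevant argument produces the main term $c \cdot \frac{N'}{\log N \log N'}$ times the singular series $\prod_{p>2}(1 - (p-1)^{-2}) \prod_{2 < p \mid b} \frac{p-1}{p-2}$, which is the standard twin-prime-type constant for the shift by $b$ restricted to odd moduli. The numerical constant $\frac{40\theta}{(2/5 + 6\theta)^2}\log 3$ should emerge from evaluating the weighted-sieve lower bound: the $\log 3$ is the Buchstab integral $\int (\cdots) \frac{dt}{t}$ evaluated at the three-prime-factor threshold, and $\frac{40\theta}{(2/5+6\theta)^2}$ is what the sieve's $F$-function contributes once the level $D$ in terms of $N'$ (not $N$) is expressed via $\theta$; I would verify $2/5 + 6\theta = \frac{2}{5} \cdot \frac{1}{1-a+\epsilon}\cdot(\cdots)$ reduces correctly.

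The main obstacle I expect is bookkeeping the transition from the modulus $N$ (the variable in Theorem~\ref{guo}) to $N' = N/2^{k(N)}$ (the variable in which the sieve dimension and level are naturally expressed), while keeping the level of distribution honest: the sieve "sees" $d$ up to roughly $N^{1/2}/2^{k(N)}$, but the sifting variable $m$ only ranges up to $N'$, so the effective level relative to the length $N'$ of the sequence is $D \approx (N')^{\alpha}$ with $\alpha = \tfrac12 \cdot \frac{\log N}{\log N'} - \frac{k(N)\log 2}{\log N'} + o(1)$, and one must check this is comfortably above $\tfrac12$ so that a $3$-almost-prime (over odd primes) result is reachable — this is exactly where $2^{k(N)} \le N^{1/6-\epsilon}$ is used, giving $\log N' \ge (5/6 + \epsilon)\log N$ and hence enough room. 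A secondary technical point is ensuring the weight function and the truncation of Buchstab iterations are chosen so that $R = 3$ and not $R = 4$; this pins down $\theta$'s role and explains why the hypothesis has $a = 1/6$ rather than something larger. The remaining steps — verifying the linear sieve axioms $(\Omega_1), (\Omega_2(\kappa=1)), (R)$ hold for $\mathcal{A}$ with the density $\omega(d)/d = \frac{d}{\varphi(d)}\cdot\frac{1}{d} = \frac{1}{\varphi(d)}$ on squarefree odd $d$ coprime to $b$, and computing the singular series as the stated Euler product — are routine and I would relegate them to lemmas, citing \cite{halberstam} for the sieve machinery and \cite{chen} for the almost-prime weight trick.
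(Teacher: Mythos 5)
Your overall framework matches the paper's: sieve $A=\{(p-b)/2^{k(N)}\}$ over odd primes coprime to $b$, feed Theorem~\ref{guo} (via the adaptation of \cite[Lemma 3.5]{halberstam} to power-factor moduli) into the Halberstam--Richert weighted linear sieve \cite[Theorem 9.1]{halberstam}, and track the level of distribution relative to $N'$; your identification of $\theta$ as the exponent gain and of $\alpha=\tfrac25+\theta$ (relative to $N'$) is exactly what the paper does, as is the singular series computation.

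There is, however, one step in your sketch that would fail as described: you enforce ``at most three odd prime factors'' by demanding that $z$ be large enough that a product of four primes exceeding $z$ cannot fit below $N'$, i.e.\ $z\gtrsim (N')^{1/4}$. With level $D=(N')^{2/5+\theta}$ this gives sifting parameter $s=\log D/\log z\le \tfrac{8}{5}+O(\theta)<2$, where the linear lower-bound function satisfies $f(s)=0$, so the unweighted mechanism produces nothing. The paper instead sifts only up to $z=X^{1/v}$ with $\alpha v=4$ (so $z\approx (N')^{1/10}$, far below $(N')^{1/4}$) and obtains the bound on the number of prime factors \emph{entirely} from Richert's weights: the conditions $1-\lambda\left(K+1-u\right)\le 0$ (so that elements with more than $K$ prime factors receive non-positive weight) and $1-\tfrac23\lambda>0$ (so the main term stays positive), with $\alpha u=\tfrac43$, are what force $a\le \frac{3K-7}{6K-6}=\frac16$ for $K=3$ and produce the constant $2v\bigl(1-\tfrac23\lambda\bigr)\log 3=\frac{40\theta}{(\frac25+\theta)(\frac25+6\theta)}\log 3$. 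The $\log 3$ comes from $f(\alpha v)=f(4)=\tfrac{2e^\gamma}{4}\log 3$ in \cite[Lemma 9.1]{halberstam}, not from a Buchstab integral at a ``three-prime-factor threshold.'' You should also account for prime factors counted with multiplicity (elements divisible by $p^2$ for $X^{1/v}\le p<X^{1/u}$), which the paper absorbs into an error $O\bigl(N'^{1-1/v}\log N'\bigr)$; this is minor but needed for the statement as phrased.
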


By applying similar methods and adjusting the sieve parameters, we obtain the following corollary, which gives results under various upper bounds on \( 2^{k(N)} \).

	Let \( s_K(N) \) denote the number of primes \( p \leq N + b \) such that \(p-b\) is divisible by $2^{k(N)}$, 
and has at most \( K \) odd prime factors.
\begin{cor}\label{intro_cor_sk}
Let \( b > 0 \) be an odd integer, and let \( \epsilon > 0 \) be a small fixed constant. Suppose \( k: \mathbb{N} \to \mathbb{N} \) is an arithmetic function satisfying
\[
2^{k(N)} \leq N^{a - \epsilon}, \quad \text{for some } 0<a <1.
\]

Define \( N' := N / 2^{k(N)} \). Then for \( K = 4, 5, 6, 7 \), and all sufficiently large \( N \), we have
\[	s_K(N  ) \geq \frac{8\theta(1 + 3K)^3\log 3 }{\left[8 + 3\theta(1 + 4K + 3K^2)\right]^2}   \prod_{p > 2} \left(1 - \frac{1}{(p - 1)^2}\right) \prod_{2 < p \mid b} \frac{p - 1}{p - 2} \cdot \frac{N'}{(\log N)(\log N')},\]
where
\[
\theta = \frac{\epsilon}{2(1 - a + \epsilon)(1 - a)}, \quad \text{and} \quad a = \frac{3K - 7}{6K - 6}.
\]
The corresponding values of \( a \) for each \( K \) are given below:

\begin{center}
	\begin{tabular}{|c|c|c|c|c|}
		\hline
		\( K \) & 4 & 5 & 6 & 7 \\
		\hline
		\( a \) & \( \dfrac{5}{18} \) & \( \dfrac{1}{3} \) & \( \dfrac{11}{30} \) & \( \dfrac{7}{18} \) \\
		\hline
	\end{tabular}
\end{center}

For \( K = 8 \), suppose instead that
\[
2^{k(N)} \leq N^{2/5} \exp\left(-(\log \log N)^3\right),
\]
then for all sufficiently large \( N \), we have
\[	s_8(N  ) \geq \frac{52}{3} \cdot \prod_{p > 2} \left(1 - \frac{1}{(p - 1)^2}\right) \prod_{2 < p \mid b} \frac{p - 1}{p - 2} \cdot \frac{N'}{(\log N)(\log N')}.\]

\end{cor}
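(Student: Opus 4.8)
The plan is to follow the template of Theorem~\ref{intro_theorem_s3} but with the sieve parameters retuned so as to allow $K$ odd prime factors instead of three. Write $p = b + 2^{k(N)} m$ with $m \le N'$, so that counting primes $p \le N+b$ with $p-b$ divisible by $2^{k(N)}$ and having at most $K$ odd prime factors amounts to sieving the sequence $\mathcal{A} = \{\, b + 2^{k(N)} m : m \le N',\ b+2^{k(N)}m \text{ prime}\,\}$ by the odd primes, and asking that the cofactor $m$ survive with few prime divisors. As in the proof of Theorem~\ref{intro_theorem_s3}, the key analytic input is Theorem~\ref{guo}: since the sieving modulus for $\mathcal{A}$ carries the fixed factor $q = 2^{k(N)} \le N^{a-\epsilon}$, the level of distribution we may legitimately take is $x^{1/2} q^{-1} (\log x)^{-B}$, i.e.\ roughly $N^{1/2 - (a-\epsilon)}$ against the total length $N'$; expressed as a power of $N'$ this is a level of distribution $N'^{\,1/2 + \theta}$ with $\theta = \epsilon/\bigl(2(1-a+\epsilon)(1-a)\bigr)$ — exactly the $\theta$ appearing in the statement. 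So one feeds the sequence $\mathcal{A}$, with density function $\omega(p) = p/(p-1)$ on odd primes $p\nmid b$ (and the adjustment at $p \mid b$ responsible for the factor $\prod_{2<p\mid b}\frac{p-1}{p-2}$), into a linear weighted sieve of Richert type (Jurkat--Richert \cite{jurkat-Richert}, Halberstam--Richert \cite{halberstam}).

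The next step is to choose the weight. To produce an almost-prime with at most $K$ prime factors from a sequence of level $D = N'^{\,\tau}$ one uses a Richert weight of the shape $1 - \frac{1}{u}\sum_{\substack{p \mid m,\ p < D^{1/u}}} \bigl(1 - \tfrac{\log p}{\log D^{1/u}}\bigr)$ and balances the parameter $u$ against the sieve's exponent $\tau = \tfrac12+\theta$; the condition that an $m$ with more than $K$ odd prime factors receives non-positive weight forces a relation between $u$, $\tau$ and $K$, which is precisely where the cutoff $a = \frac{3K-7}{6K-6}$ (and hence $a = \tfrac16$ when $K$ is effectively $3$ in the previous theorem) comes from: solving for the threshold on $2^{k(N)}$ that still lets the weighted sum be positive gives the tabulated values of $a$. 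The main term then comes out as $X \cdot V(D) \cdot \bigl(f(\tau u)/\tau u + o(1)\bigr)$-type expression with $X \asymp N'/\log N$ and $V(D) \asymp (\log N')^{-1}$ times the singular series $\prod_{p>2}\bigl(1-\tfrac{1}{(p-1)^2}\bigr)\prod_{2<p\mid b}\tfrac{p-1}{p-2}$, and after inserting the optimal $u$ one reads off the numerical constant $\frac{8\theta(1+3K)^3\log 3}{[8+3\theta(1+4K+3K^2)]^2}$; the appearance of $\log 3$ is the usual artifact of the Richert sieve's lower-bound function at its relevant argument, matching the $\log 3$ in Theorem~\ref{intro_theorem_s3}. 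For $K = 4,5,6,7$ the bound on $q$ from Theorem~\ref{guo} that one actually needs is the comparatively mild $q \le N^{a-\epsilon}$ with these $a < 2/5$, so Theorem~\ref{guo} applies directly; for $K = 8$ one is pushed to the very edge $a = 2/5$, so one must instead invoke Theorem~\ref{guo} in its full strength $q \le x^{2/5}\exp(-(\log\log x)^3)$, which is why the hypothesis changes and the constant $\frac{52}{3}$ (now independent of $\theta$, since $\theta$ has been absorbed into the limiting value as $a \to 2/5$) replaces the $\theta$-dependent expression.

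Concretely the steps are: (i) set up $\mathcal{A}$ and identify the local densities and the sifting function $V(z)$, with the $b$-dependent correction; (ii) verify Axiom $(R)$ with level of distribution $D = N'^{\,1/2+\theta}$ by reducing the error term for $\mathcal{A}$ to the Bombieri--Vinogradov-type estimate of Theorem~\ref{guo} with $q = 2^{k(N)}$, taking $A$ large enough to beat the $\log$ powers from the weights; (iii) apply the linear weighted sieve with Richert's weights, optimizing the inner parameter $u$ and the decomposition threshold to guarantee that any surviving $m$ has at most $K$ odd prime factors — this is the step that pins down the admissible range $a = \frac{3K-7}{6K-6}$; (iv) evaluate the resulting main term and simplify the constant. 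The main obstacle is step (iii): getting the weighted sieve inequality to certify "$\le K$ odd prime factors" while keeping the main term positive requires the delicate balancing of $u$, the level exponent $\tfrac12+\theta$, and $K$, and it is exactly this balancing — together with the need, for $K=8$, to have $\theta$ as large as the $N^{2/5}$-level permits — that determines both the table of $a$-values and the transition in hypotheses at $K=8$. A secondary technical point is controlling the contribution of the $p \mid b$ primes and of the prime $2$ uniformly, but this is handled exactly as in the proof of Theorem~\ref{intro_theorem_s3}.
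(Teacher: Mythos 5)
Your overall architecture matches the paper's: the corollary is proved by feeding the sequence $\left\{ (p-b)/2^{k(N)} \right\}$ into the Halberstam--Richert weighted linear sieve, using Theorem~\ref{guo} (via Lemma~\ref{Lemma_guo}) as the level-of-distribution input, and retuning the parameters of the $s_3$ theorem for general $K$; the $K=8$ transition and the role of $\log 3$ are also correctly identified. However, there is a concrete error at the single most important quantitative step: the level of distribution. The admissible moduli range is $d \leq N^{1/2} 2^{-k(N)} (\log N)^{-C_1}$, and expressed as a power of $N'$ this is
\[
N'^{\,\alpha} \quad \text{with} \quad \alpha = \frac{1}{2} - \frac{a}{2(1-a)} + \theta = \frac{4}{3K+1} + \theta,
\]
not $N'^{\,1/2+\theta}$ as you claim. (Writing $b = a-\epsilon$, the exponent is $\frac{1/2 - b}{1-b} = \frac12 - \frac{b}{2(1-b)} < \frac12$, strictly below $\frac12$, whereas your $\frac12+\theta$ exceeds it.) This is not a cosmetic slip: the entire content of the corollary is that the level exponent \emph{decreases} as $a$ grows, which is what forces more prime factors $K$ for larger powers of $2$ and is exactly where the relation $a = \frac{3K-7}{6K-6}$ comes from (equivalently $\alpha = \frac{4}{3K+1}+\theta$ with $\alpha v = 4$, $\alpha u = \frac43$, and the weight-positivity condition $1-\lambda(K+1-u)<0$ together with $1-\frac23\lambda>0$). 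With a level of $N'^{1/2+\theta}$, which is essentially independent of $a$ as $\epsilon \to 0$, the balancing of $u$, the level, and $K$ that you invoke in step~(iii) cannot produce the table of $a$-values, so your sketch is internally inconsistent at precisely the step you identify as the main obstacle.

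Two secondary points. First, your weight $1 - \frac{1}{u}\sum_{p \mid m,\ p < D^{1/u}}\bigl(1 - \frac{\log p}{\log D^{1/u}}\bigr)$ omits the lower sifting cutoff $X^{1/v}$ and fixes the outer coefficient at $\frac{1}{u}$; the paper needs a free parameter $\lambda$ (chosen just above $\frac{1}{K+1-u}$) to satisfy both the almost-prime condition and the positivity of $1-\frac23\lambda$ simultaneously, and the stated constants $\frac{8\theta(1+3K)^3\log 3}{[8+3\theta(1+4K+3K^2)]^2}$ come from evaluating $2v\bigl(1-\frac23\lambda\bigr)\log 3$ at that choice. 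Second, the paper's proof is genuinely a two-line parameter substitution into the already-established bound \eqref{weight_final_form_2} from the $K=3$ theorem, rather than a from-scratch rerun of the sieve; re-deriving everything is legitimate but unnecessary once that theorem is in place.
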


The number of odd prime divisors can be further reduced to two by adapting Chen's method. There are two natural sieving approaches.

The first is to sieve the sequence $\frac{p - b}{2^{k(N)}}$, where $p - b$ is divisible exactly by $2^{k(N)}$. This leads to the same weight function used in Chen’s original method and counts primes $p \leq N + b$ such that $p - b = 2^{k(N)} P_2$, where $P_2$ denotes an integer with at most two prime factors.

The second approach allows additional powers of 2 to divide $\frac{p - b}{2^{k(N)}}$, sieving over   sequence $\frac{p - b}{2^{k(N)}}$ where $p - b$ is divisible by $2^{k(N)}$, but not necessarily exactly. In this case, we count primes $p \leq N + b$ such that $p - b = 2^{k(N) + m} P_2$ for some $m \geq 0$, and the weight function must be appropriately modified.

The first approach leads to the following result.

	\begin{theorem}\label{intro_theorem_n2}
	Let \( b > 0 \) be an odd integer, and let \( k: \mathbb{N} \to \mathbb{N} \) be an arithmetic  function such that for some constant \( 0 < c < 1 \), we have
\[
cN^a < 2^{k(N)} \leq N^a , \quad \text{where } a = \frac{1}{87}.
\]

Define \( N' := N / 2^{k(N)} \), and let \( n_2(N ) \) denote the number of primes \( p \leq N + b \) such that
\[
p - b = 2^{k(N)} P_2\left(N'^{85/688}\right),
\]
where \( P_r(z) \) denotes a number that is a product of at most \( r \)   primes, all greater than \( z \).
 Then for sufficiently large $N$, 
\[	n_2(N ) \geq 0.0016 \prod_{p > 2} \left(1 - \frac{1}{(p - 1)^2} \right) \prod_{2 < p \mid b} \frac{p - 1}{p - 2} \cdot \frac{N'}{(\log N')(\log N)}.\]

\end{theorem}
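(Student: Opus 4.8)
## Proof Proposal for Theorem~\ref{intro_theorem_n2}

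The plan is to adapt Chen's switching argument to the sequence $\mathcal{A} = \{\,(p-b)/2^{k(N)} : p \leq N+b,\ 2^{k(N)} \parallel (p-b)\,\}$, which has approximately $N' = N/2^{k(N)}$ terms and, by Theorem~\ref{guo}, admits a level of distribution essentially $N'^{\,1-a}$ relative to the prime $p$ running up to $N$; here the constraint $a = 1/87$ is precisely what is needed so that the sieving level $D \approx N'^{\,1-a}$ exceeds $N'$ raised to the power dictated by the Chen parameters. First I would set up Chen's weighted sum
\[
S = \sum_{\substack{p \leq N+b \\ 2^{k(N)} \parallel (p-b)}} \left( 1 - \tfrac{1}{2}\!\!\sum_{\substack{q \mid (p-b)/2^{k(N)} \\ z_1 \leq q < z_2}}\!\! 1 \right),
\]
with $z_1 = N'^{\,1/8}$-ish and $z_2 = N'^{\,85/688}$ to be pinned down, designed so that $S > 0$ forces the existence of primes $p$ with $(p-b)/2^{k(N)}$ free of prime factors below $z_1$ and having at most two prime factors above $z_2$ — the threshold $85/688$ in the statement is the optimized $z_2$ once one works through the inequality $3 \times 85/688 > \text{(level exponent)}$. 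The main term comes out to a positive multiple of the singular series $\prod_{p>2}(1 - (p-1)^{-2})\prod_{2<p\mid b}\frac{p-1}{p-2} \cdot N'/((\log N')(\log N))$, with the extra $\log N$ (rather than $\log N'$) reflecting that $p$ ranges up to $N$ while we sieve a sequence of size $N'$.

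The proof breaks into three estimates in the usual Chen pattern. (i) \emph{Lower bound for the main sum} $S_1 = \sum 1$: apply the linear (Jurkat--Richert) sieve to $\mathcal{A}$ with level $D = N'^{\,1-a}(\log N)^{-B}$, using Theorem~\ref{guo} to control the remainder $\sum_{d \leq D}|r_d|$; the sifting function $f$ of the linear sieve evaluated at $s = \log D/\log z_1$ gives the constant. (ii) \emph{Upper bound for the subtracted sum} $S_2 = \sum_{z_1 \leq q < z_2} \#\{p : q \mid (p-b)/2^{k(N)}\}$: bound this by Selberg's upper-bound sieve (or the linear sieve upper bound) applied to each residue class, summing $F$ over the dyadic range $[z_1, z_2)$ via a Mertens-type estimate; this is where the constant $\tfrac12$ and the ranges are tuned so $S_1 - \tfrac12 S_2 > 0$. (iii) \emph{Chen's switch}: the only way $S > 0$ could fail to yield the desired $P_2$ is through $p$ with $(p-b)/2^{k(N)} = q_1 q_2 q_3$, a product of exactly three primes in certain ranges; one re-counts these by switching the roles of $p$ and the cofactor — viewing $q_3$ as the sifting variable in a new bilinear sum — and shows via a Buchstab/Selberg argument plus Theorem~\ref{guo} that their contribution is smaller than the slack in (i)--(ii). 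Numerically optimizing $a$, $z_1$, $z_2$, and the switching cutoffs yields the explicit $1/87$, $85/688$, and the constant $0.0016$.

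The hard part will be step (iii), the switching estimate, combined with the bookkeeping that the modulus in Theorem~\ref{guo} is restricted to \emph{powers of the base $2$} — so the level-of-distribution input applies to the congruence $p \equiv b \pmod{2^{k(N)} d}$ only after one factors out the power of $2$ cleanly and checks $(d,2)=1$ throughout; the condition $2^{k(N)} \parallel (p-b)$ (exact divisibility) must be handled by an inclusion--exclusion over extra powers of $2$, each term of which still falls under Theorem~\ref{guo} with a shifted modulus $2^{k(N)+j}$, and one must verify the resulting $q = 2^{k(N)+j}$ stays within the range $q \leq x^{2/5}\exp(-(\log\log x)^3)$. The lower-order constraint $cN^a < 2^{k(N)}$ is what guarantees $\log N' \asymp \log N$ so that the two logarithms in the denominator are genuinely comparable and the final constant is uniform. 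I expect steps (i) and (ii) to be essentially routine given the cited sieve machinery; the novelty and the risk both lie in carrying Chen's switching through with the power-factor modulus in place of the Bombieri--Vinogradov range.
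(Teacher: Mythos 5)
Your overall architecture coincides with the paper's: Chen's weight on the sequence $A = \{(p-b)/2^{k(N)} : 2^{k(N)}\,\|\,(p-b)\}$, a linear-sieve lower bound for $S(A,\mathcal{P},z)$, a linear-sieve upper bound for $\sum_{z\le q<y} S(A_q,\mathcal{P},z)$ summed by partial summation, and a switched count of the triple products $p_1p_2p_3$, with Elliott's power-factor analogue of Bombieri--Vinogradov (Theorem~\ref{guo}) supplying the remainder estimates. However, three of your quantitative anchors are wrong in ways that would derail the numerology if carried through. First, the level of distribution is not $N'^{1-a}$: Theorem~\ref{guo} restricts the sieve moduli to $d \le x^{1/2}q^{-1}(\log x)^{-B}$, so with $x=N$ and $q = 2^{k(N)} \asymp N^{a}$ the level is $D \approx N^{1/2}/2^{k(N)} = N'^{\,1/2 - a/(2(1-a))} = N'^{\,85/172}$ for $a = 1/87$ --- just under $N'^{1/2}$, nowhere near $N'$. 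Second, $85/688$ is the \emph{lower} sieving limit $z = N'^{\delta}$ (it equals $\tfrac14\cdot\tfrac{85}{172}$, chosen so that $s = \log D/\log z = 4$, the endpoint at which $f(s) = 2e^{\gamma}\log(s-1)/s$ still applies and is maximal), while the upper cutoff is $y = N'^{1/3}$; you have the two reversed (your ``$z_2 = N'^{85/688}$'' is smaller than your ``$z_1 \approx N'^{1/8}$''), and your heuristic ``$3\times 85/688 > $ level exponent'' is false since $255/688 < 85/172$ --- the reduction to $P_2$ comes from the weight function, not from $z^3 > D$. Third, the switching step cannot be powered by Theorem~\ref{guo}, which estimates $\pi(y,qd,r)$ for genuine primes; the switched sequence $B = \{2^{k(N)}p_1p_2p_3 + b\}$ requires a bilinear-form inequality of Bombieri--Vinogradov type for $\sum_{d<D^*}\max_{(a,d)=1}\bigl|\sum_{n}\sum_{p}\cdots\bigr|$, which the paper takes from \cite[Theorem~10.7]{nathanson}; since $(d,2b)=1$ for $d \mid P(y)$, the factor $2^{k(N)}$ is inverted modulo $d$ and absorbed into the residue class, so no power-factor version is needed there. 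Finally, the inclusion--exclusion over extra powers of $2$ that you flag as delicate reduces to the single difference $\pi(N+b,2^{k(N)}d,b) - \pi(N+b,2^{k(N)+1}d,b)$, both moduli of which lie comfortably within Elliott's range; with these corrections your plan is the paper's proof.
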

The second approach yields the following result. Note that the constant in the lower bound for \( s_2(N) \) is exactly twice that of \( n_2(N) \) under the same condition on \( 2^{k(N)} \).

	\begin{theorem}
	Let \( b > 0 \) be an odd integer, and let \( k: \mathbb{N} \to \mathbb{N} \) be an arithmetic  function such that for some constant \( 0 < c < 1 \), we have
\[
cN^a < 2^{k(N)} \leq N^a , \quad \text{where } a = \frac{1}{87}.
\]

	Define \( N' := N / 2^{k(N)} \). Then for sufficiently large $N$, 
	\[	s_2(N ) \geq 0.0032 \prod_{p > 2} \left(1 - \frac{1}{(p - 1)^2} \right) \prod_{2 < p \mid b} \frac{p - 1}{p - 2} \cdot \frac{N'}{(\log N)(\log N')}.\]
	\end{theorem}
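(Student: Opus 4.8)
The plan is to run Chen's switching method on the sequence $\mathcal{A} = \{ (p-b)/2^{k(N)} : p \leq N+b,\ 2^{k(N)} \mid p-b \}$, using the \emph{second} sieving convention (allowing extra powers of $2$), which is what distinguishes $s_2(N)$ from $n_2(N)$. Set $N' = N/2^{k(N)}$ and $z = N'^{\,1/u}$ for a parameter $u$ to be optimized near the value dictated by $a = 1/87$. The classical Chen weight is
\[
w(n) = 1 - \tfrac{1}{2}\sum_{\substack{z \leq p < N'^{1/3} \\ p \mid n}} 1,
\]
and the point is that if $S(\mathcal{A}, z) - \tfrac12 \sum_{z \le p < N'^{1/3}} S(\mathcal{A}_p, z) - \tfrac12 \Omega > 0$, where $\Omega$ bounds the contribution of $n \in \mathcal{A}$ with $z \le p_1 < N'^{1/3} \le p_2 \le p_3$ and $n = p_1 p_2 p_3$ (times a possible power of $2$), then there exist $p$ with $(p-b)/2^{k(N)}$ having at most two odd prime factors, each exceeding $z$. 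First I would estimate the main term $S(\mathcal{A},z)$ from below and $S(\mathcal{A}_p, z)$ from above by the linear (Jurkat--Richert) sieve, with the level of distribution supplied by Theorem~\ref{guo}: since $q = 2^{k(N)} \le N^{a}$ with $a = 1/87 < 2/5$, the admissible level for the $d$-sum is $x^{1/2} q^{-1} (\log x)^{-B}$, i.e.\ essentially $N'^{1/2 - \delta}$ after accounting for $q \approx N'^{\,a/(1-a)}$; this is where the somewhat awkward exponent $85/688$ and the value $a = 1/87$ come from, and it is the same bookkeeping already carried out for Theorem~\ref{intro_theorem_n2}.

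The heart of the argument — and the step I expect to be the main obstacle — is bounding $\Omega$, the count of $p \le N+b$ for which $(p-b)/2^{k(N)} = 2^m p_1 p_2 p_3$ with $z \le p_1 < N'^{1/3} \le p_2 \le p_3$. Following Chen, one reverses the roles of the sieved variable and the prime: writing $p_1 p_2 p_3 \cdot 2^{k(N)+m} = p - b$, one fixes $p_1$ (and $m$) and sieves the sequence $\{ p_1 \cdot 2^{k(N)+m} \cdot n + b : n \le N'/(p_1 2^m),\ n \text{ has no prime factor} < N'^{1/3} \}$ for the condition that it be prime, detecting via a weighted upper-bound sieve the solutions with $n = p_2 p_3$. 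The subtlety specific to this paper is the extra sum over $m \ge 0$: the modulus of the relevant arithmetic progression is $p_1 2^{k(N)+m}$, which must still be below the level of distribution in Theorem~\ref{guo}. Since $\sum_m 2^{-m}$ converges and the constraint $2^{k(N)} \le N^{1/87}$ leaves room, the $m$-sum contributes a convergent geometric factor; this is precisely the mechanism that doubles the constant relative to $n_2(N)$ (where $m = 0$ is forced), matching the remark that $s_2 \ge 2 n_2$ under the same hypothesis. I would handle the $p_1 \in [z, N'^{1/3})$ double-counting issue and the Buchstab-type estimates exactly as in Chen's original paper and in the $n_2$ case, invoking the linear sieve's upper and lower bounds with the functions $F, f$ at the appropriate arguments.

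Once $S(\mathcal{A},z)$, $\sum_p S(\mathcal{A}_p, z)$, and $\Omega$ are each expressed as a constant times $\dfrac{N'}{(\log N')(\log N)} \prod_{p>2}\bigl(1 - \tfrac{1}{(p-1)^2}\bigr)\prod_{2<p\mid b}\tfrac{p-1}{p-2}$, the remaining work is purely numerical: choose $u$ (equivalently $z$) to make the combination $S(\mathcal{A},z) - \tfrac12\sum_p S(\mathcal{A}_p,z) - \tfrac12\Omega$ positive with the largest possible explicit constant. With $a = 1/87$ this optimization yields the stated $0.0032$; I would present the optimization as a one-variable calculation and verify the sign and the constant, noting that the factor $2$ over Theorem~\ref{intro_theorem_n2} is transparent from the geometric $m$-sum and does not require re-optimizing $u$. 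The singular series and the passage from "at most two odd prime factors, each $> z = N'^{1/3-\varepsilon'}$" to the clean statement "$P_2$" are identical to the $n_2$ case and need no new ideas.
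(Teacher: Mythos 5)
Your overall architecture matches the paper's: sieve the set $A' = \{(p-b)/2^{k(N)} : 2^{k(N)} \mid p-b\}$ with a Chen-type weight whose third term is summed over the extra powers $2^m$, estimate the first two sieve sums via Theorems~\ref{lower_bound_S(A,P,z)} and~\ref{upper_bound2}, and treat the switched sets $B_m = \{2^{k(N)+m}p_1p_2p_3 + b\}$ as in Theorem~\ref{upper_bound_3}. However, there are two genuine problems. First, you misattribute the doubling of the constant to the geometric $m$-sum in the negative term $\Omega$. That cannot be the mechanism: a larger negative term would only \emph{hurt} the lower bound. The actual source of the factor $2$ is that $|A'| = \pi(N+b,2^{k(N)},b)$ has main term $\operatorname{li}(N+b)/\varphi(2^{k(N)})$, which is exactly twice the main term of $|A|$ (exact divisibility), since $1/\varphi(2^k) = 2\left(1/\varphi(2^k) - 1/\varphi(2^{k+1})\right)$; hence \emph{both} $S(A',\mathcal{P},z)$ and $\sum_q S(A'_q,\mathcal{P},z)$ double, and the point of the $m$-sum analysis is to show the third term does not \emph{more} than double, so that all three terms scale together and $f'_{a,\delta} \geq 2f_{a,\delta}$.

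Second, "$\sum_m 2^{-m}$ converges" does not by itself control $\sum_{m=0}^{M} S(B_m,\mathcal{P},y^{(m)})$. Each $B_m$ is sieved up to a different level $y^{(m)} = (N'/2^m)^{1/3}$, so the resulting bound is $\asymp C_{\delta,m} N'_m/\log N'_m$ with $C_{\delta,m} = \int_{\delta \log N'/\log N'_m}^{1/3} \frac{\log(2-3\beta)}{\beta(1-\beta)}\,d\beta$, and both the integral and the $\log N'_m$ in the denominator vary with $m$. One must prove $C_{\delta,m}/\log N'_m \leq C_{\delta}/\log N'$ for all $0 \leq m \leq M$, which the paper does in Lemma~\ref{lemma_S/T} by showing $x \mapsto x\int_{x\delta}^{1/3} f(t)\,dt$ is nonincreasing; this holds only under the hypothesis $\delta \geq \frac{1}{3e}$, which must be (and is) verified for $\delta = \frac{85}{688}$. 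Without this monotonicity step the geometric series gives you a bound of the form $2T \cdot \max_m \left(\frac{C_{\delta,m}\log N'}{C_\delta \log N'_m}\right)$ with no a priori control on the maximum, and the claimed constant $0.0032$ does not follow.
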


\subsection*{Notation}

Throughout this paper, \( p, p_1, p_2, p_3 \), and \( q \) denote prime numbers.  We write \( \varphi(n) \) for Euler’s totient function, \( \mu(n) \) for the M\"obius function, \( \nu(n) \) for the number of distinct prime divisors of \( n \), and \( d(n) \) for the number of divisors of \( n \). The logarithmic integral is denoted by \( \operatorname{li}(x) = \int_2^x \frac{dt}{\log t} \).

\section{\texorpdfstring{$p - b$ divisible   by $2^{k(N)}$ and at most two odd prime divisors}{p - b divisible exactly by 2^k(N) and at most two odd prime divisors}}

Chen’s theorem \cite{chen} states that every sufficiently large even integer \( N \) can be written as
\[
N = p + P_2,
\]
where \( P_2 \) denotes an almost prime of order 2, i.e., a product of at most two  primes. A similar argument shows that for any \( k \geq 1 \), there are infinitely many primes \( p \) such that \( p + 2k = P_2 \).

In this section, we adapt Chen’s method to show that there are infinitely many primes \( p \) such that \( p - b \) is divisible by a large power of \( 2 \) and has at most two odd prime factors. Our approach follows the framework of \cite[Chapter~10]{nathanson}, beginning with the linear sieve as presented in \cite[Chapter~11]{opera}; see also \cite[Theorem~9.7]{nathanson}.

Let \( A = \{a(n)\}_{n=1}^\infty \) be a sequence of non-negative real numbers such that
\[
|A| := \sum_{n=1}^\infty a(n) < \infty.
\]
When referring to a set \( A \), we identify it with its characteristic function, defined by
\[
a(n) =
\begin{cases}
	m_n, & \text{if } n \in A \text{ with multiplicity } m_n, \\
	0, & \text{otherwise}.
\end{cases}
\]
Let \( \mathcal{P} \) be a set of primes, and for \( z \geq 2 \), define
\[
P(z) := \prod_{\substack{p \in \mathcal{P} \\ p < z}} p, \quad 
S(A, \mathcal{P}, z) := \sum_{\substack{n \geq 1 \\ (n, P(z)) = 1}} a(n).
\]

Suppose \( g(d) \) is a multiplicative function such that
\[
0 \leq g(p) < 1 \quad \text{for all } p \in \mathcal{P},
\]
and that
\begin{equation}\label{condition_epsilon}
	\prod_{\substack{w \leq p < z \\ p \in \mathcal{P}}} \left(1 - g(p)\right)^{-1}
	\leq \frac{\log z}{\log w} \left( 1 + \frac{L}{\log w} \right)
\end{equation}
for every \( 2 \leq w < z \), where \( L \geq 1 \) is a constant.

Define
\[
X := |A| = \sum_{n=1}^\infty a(n), \quad \text{and} \quad V(z) := \prod_{p \mid P(z)} (1 - g(p)).
\]
For each \( d \mid P(z) \), define
\[
|A_d| := \sum_{\substack{n \geq 1 \\ d \mid n}} a(n) = g(d) X + r(d),
\]
where \( r(d) \) is the remainder term.

We state the following theorem for the linear sieve. The original result is due to Jurkat and Richert \cite{jurkat-Richert}; see also \cite[Theorem~11.6]{opera} for a more general formulation applicable to sieves of various dimensions. Although the derivations differ slightly, the upper and lower bound functions are the same. Here, we follow the formulation in \cite{opera}.

\begin{theorem}\cite[Theorem 11.6]{opera}\label{Jurkat}
	Let \[s = \frac{\log D}{\log z} .\] Then, if \( s \geq 1 \), we have the upper bound
	\[
	S(A, \mathcal{P}, z) \leq X V(z) \left(F(s) + O\left((\log D)^{-1/6}\right)\right) + O\left(\sum_{\substack{d \mid P(z) \\ d < D}} |r(d)|\right),
	\]
	and if \( s \geq 2 \), the lower bound
	\[
	S(A, \mathcal{P}, z) \geq X V(z) \left(f(s) + O\left((\log D)^{-1/6}\right)\right) + O\left(\sum_{\substack{d \mid P(z) \\ d < D}} |r(d)|\right),
	\]
	where \( F(s) \) and \( f(s) \) are the standard upper and lower bound sieve functions as defined in  \cite[(12.1)--(12.2)]{opera}; see also \cite[(9.27)--(9.28)]{nathanson}.
\end{theorem}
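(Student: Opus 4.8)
The plan is to prove this by the classical argument for the one-dimensional (linear) sieve of Jurkat and Richert \cite{jurkat-Richert}, in the form streamlined by Rosser and Iwaniec; since the statement is quoted verbatim from \cite[Theorem~11.6]{opera}, in the paper itself I would simply cite that reference, but here is the shape of the argument. The combinatorial core is the Buchstab identity
\[
S(A, \mathcal{P}, z) = S(A, \mathcal{P}, w) - \sum_{\substack{w \leq p < z \\ p \in \mathcal{P}}} S(A_p, \mathcal{P}, p), \qquad 2 \leq w < z,
\]
which, applied repeatedly, rewrites $S(A, \mathcal{P}, z)$ as a Legendre-type alternating sum $\sum_{d \mid P(z)} \mu(d)\,|A_d|$. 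The point is not to iterate indefinitely: following Rosser's construction I would truncate the iteration along a decreasing sequence of levels $D = D_0 > D_1 > D_2 > \cdots$, producing upper and lower sieve weights $\lambda_d^{\pm}$ supported on squarefree $d \mid P(z)$ with $d < D$, satisfying $|\lambda_d^{\pm}| \leq 1$ and the fundamental inequalities $\sum_{d \mid n} \lambda_d^{-} \leq \sum_{d \mid (n, P(z))} \mu(d) \leq \sum_{d \mid n} \lambda_d^{+}$ for every $n \geq 1$. Summing against $a(n)$ then gives the sandwich
\[
\sum_{\substack{d \mid P(z) \\ d < D}} \lambda_d^{-} |A_d| \;\leq\; S(A, \mathcal{P}, z) \;\leq\; \sum_{\substack{d \mid P(z) \\ d < D}} \lambda_d^{+} |A_d|.
\]

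Next I would substitute $|A_d| = g(d) X + r(d)$. Since $|\lambda_d^{\pm}| \leq 1$ and the weights are supported on $d < D$, the remainder contributes at most $O\bigl(\sum_{d \mid P(z),\, d < D} |r(d)|\bigr)$, which is the error term in the statement. The main terms are $X \sum_{d} \lambda_d^{\pm} g(d)$, and the real content is to evaluate these: under the dimension-one hypothesis \eqref{condition_epsilon} with $L$ bounded, one has
\[
\sum_{\substack{d \mid P(z) \\ d < D}} \lambda_d^{+} g(d) = V(z)\bigl(F(s) + O((\log D)^{-1/6})\bigr), \qquad \sum_{\substack{d \mid P(z) \\ d < D}} \lambda_d^{-} g(d) = V(z)\bigl(f(s) + O((\log D)^{-1/6})\bigr),
\]
with $s = \log D / \log z$. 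Here $V(z) = \prod_{p \mid P(z)}(1 - g(p))$ is pulled out as the expected density, and \eqref{condition_epsilon} is exactly the input that allows one to replace the partial products $\prod_{w \leq p < z}(1 - g(p))^{-1}$ by $\log z/\log w$ up to errors of size $O(1/\log w)$ — the linear ($\kappa = 1$) Mertens-type behaviour. Feeding these estimates into the truncated Buchstab iteration, the renormalised combinatorial sums converge, as $\log D \to \infty$, to the solutions of the coupled system of difference-differential equations
\[
(s F(s))' = f(s-1), \qquad (s f(s))' = F(s-1),
\]
which, together with the boundary data of \cite[(12.1)--(12.2)]{opera}, characterise the linear sieve functions $F$ and $f$. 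The restriction $s \geq 1$ (respectively $s \geq 2$) is the range in which $F$ (respectively $f$) takes its standard value and in which the level-$D$ Rosser weights give a genuine bound.

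I expect the main obstacle to be the quantitative bookkeeping inside the truncated iteration: one has to show that after $O(\log\log D)$ Buchstab steps the truncated combinatorial sum differs from the continuous solution by at most $O((\log D)^{-1/6})$, controlling simultaneously (i) the accumulation over all levels of the $O(1/\log w)$ errors coming from \eqref{condition_epsilon}, (ii) the discretisation error in passing from sums over primes $p < z$ to the continuous variable $s$, and (iii) the tail from the truncated terms, bounded via \eqref{condition_epsilon} and $|\lambda_d^{\pm}| \leq 1$. The exponent $1/6$ is simply a convenient value that survives all three error sources and is not claimed to be sharp. In practice I would not reprove any of this: I would cite \cite[Chapter~11]{opera} directly, or deduce the statement from the general $\kappa$-dimensional sieve given there by specialising to $\kappa = 1$ and observing that a bounded $L$ affects only the implied constants.
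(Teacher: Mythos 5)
Your proposal is correct and aligns with the paper, which offers no proof of this statement at all: it is quoted directly from \cite[Theorem 11.6]{opera} and the citation serves as the proof. Your sketch — Buchstab iteration, truncated Rosser weights \(\lambda_d^{\pm}\) with \(|\lambda_d^{\pm}|\leq 1\) giving the remainder bound, main-term evaluation under the one-dimensional condition \eqref{condition_epsilon}, and the difference-differential system characterising \(F\) and \(f\) — is an accurate outline of the argument in that reference, and your concluding decision to cite it rather than reprove it is exactly what the paper does.
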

\begin{lemma}\label{initial_value}
\[
F(s) = \frac{2e^\gamma}{s}, \quad \text{for } 1 \leq s \leq 3,
\]
\[
f(s) = \frac{2e^\gamma \log(s - 1)}{s}, \quad \text{for } 1 \leq s \leq 4,
\]
where \( \gamma \) is Euler’s constant.
\end{lemma}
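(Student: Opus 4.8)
The plan is to read these explicit values straight off the differential-difference characterization of the linear sieve functions. Recall from \cite[(12.1)--(12.2)]{opera} (equivalently \cite[(9.27)--(9.28)]{nathanson}) that $F$ and $f$ form the unique continuous pair on $(0,\infty)$ satisfying
$$s F(s) = 2e^\gamma, \qquad f(s) = 0 \qquad (0 < s \le 2),$$
together with
$$\bigl(s F(s)\bigr)' = f(s-1), \qquad \bigl(s f(s)\bigr)' = F(s-1) \qquad (s > 2).$$
The constant $2e^\gamma$ is built into the definition (it is the normalization forced by the fundamental lemma of sieve theory, i.e.\ Mertens' theorem); the remaining content of the lemma is an elementary integration of these shift equations.

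First I would push the formula for $F$ from $(0,2]$ out to $(0,3]$. For $s \in (2,3]$ the shifted argument $s-1$ lies in $(1,2]$, where $f$ vanishes, so $\bigl(s F(s)\bigr)' = f(s-1) = 0$ on $(2,3]$; hence $sF(s)$ is constant there, and by continuity at $s = 2$ this constant equals $2F(2) = 2e^\gamma$. Therefore $F(s) = 2e^\gamma/s$ for all $0 < s \le 3$, in particular on $[1,3]$.

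Next I would evaluate $f$ on $(2,4]$. For $s \in (2,4]$ the argument $s-1$ lies in $(1,3]$, so by the previous step $F(s-1) = 2e^\gamma/(s-1)$, whence $\bigl(s f(s)\bigr)' = 2e^\gamma/(s-1)$ on $(2,4]$. Integrating from $s = 2$ and using $f(2) = 0$,
$$s f(s) = \int_2^{s} \frac{2e^\gamma}{t-1}\,dt = 2e^\gamma\bigl(\log(s-1) - \log 1\bigr) = 2e^\gamma \log(s-1),$$
so $f(s) = \dfrac{2e^\gamma \log(s-1)}{s}$ for $2 \le s \le 4$, which agrees with the value $f \equiv 0$ on $(0,2]$ at the common endpoint $s = 2$.

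There is no real obstacle here; the only points needing care are (i) tracking which sub-interval the shifted argument $s-1$ lands in, so that the correct branch of $F$ or $f$ is substituted into each shift equation, and (ii) noting that the constant of integration is fixed by the normalization $sF(s) = 2e^\gamma$ rather than left free. One should also keep in mind that the $\log$-formula for $f$ is the operative one only on $[2,4]$: on $(0,2]$ one has $f \equiv 0$, and the expression $2e^\gamma \log(s-1)/s$ reproduces this only at $s = 2$.
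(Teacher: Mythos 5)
Your derivation is correct and is essentially the content that the paper delegates to its citations of \cite[(12.1)--(12.2)]{opera} and \cite[Theorem 9.8]{nathanson}: the paper's proof is a one-line reference, and you have simply carried out the standard integration of the delay-differential system that those sources use to define $F$ and $f$. Your closing caveat is also well taken --- the expression $2e^\gamma\log(s-1)/s$ genuinely represents $f$ only on $[2,4]$ (on $(0,2]$ one has $f\equiv 0$), and the paper only ever invokes this formula in the range $2\le s\le 4$, so nothing is lost.
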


\begin{proof}
	This follows directly from \cite[(12.1)--(12.2)]{opera} or from \cite[Theorem 9.8]{nathanson}.
\end{proof}

	\begin{theorem}\label{main_theorem}
		Let \( b > 0 \) be an odd integer, and let \( k: \mathbb{N} \to \mathbb{N} \) be an arithmetic  function such that for some constant \( 0 < c < 1 \), we have
	\[
	cN^a < 2^{k(N)} \leq N^a , \quad \text{where } a = \frac{1}{87}.
	\]
	
	Define \( N' := N / 2^{k(N)} \), and let \( n_2(N ) \) denote the number of primes \( p \leq N + b \) such that
	\[
	p - b = 2^{k(N)} P_2\left(N'^{85/688}\right),
	\]
	where \( P_r(z) \) denotes a number that is a product of at most \( r \)   primes, all greater than \( z \).
	Then for sufficiently large $N$, 
	\[	n_2(N ) \geq 0.0016 \prod_{p > 2} \left(1 - \frac{1}{(p - 1)^2} \right) \prod_{2 < p \mid b} \frac{p - 1}{p - 2} \cdot \frac{N'}{(\log N')(\log N)}.\]

	\end{theorem}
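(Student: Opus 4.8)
The plan is to adapt Chen's switching method to the sequence $\mathcal{A} = \{\,p - b : p \le N+b,\ p \equiv b \pmod{2^{k(N)}}\,\}$, divided by $2^{k(N)}$; equivalently, set $\mathcal{A} = \{\,n \le N' : 2^{k(N)}n + b \text{ is prime}\,\}$ where $N' = N/2^{k(N)}$, and sieve $\mathcal{A}$ by the set $\mathcal{P}$ of odd primes not dividing $2b$. We want to count $n \le N'$ with $2^{k(N)}n + b$ prime and $n = P_2(z)$ for $z = N'^{85/688}$. Write $z_1 = N'^{1/2}$ (roughly) and introduce Chen's weight
\[
W(n) = 1 - \tfrac{1}{2}\sum_{\substack{z \le q < z_1 \\ q \mid n}} 1,
\]
so that $W(n) > 0$ forces $n$ to have at most two prime factors $\ge z$ once one checks that a product of three or more primes in $[z, z_1)$ would make $N' < z^3$ impossible for the relevant range, together with the observation that primes in $[z_1, N')$ dividing $n$ are at most one in number. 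The quantity to bound below is $\sum_{n \in \mathcal{A},\, (n,P(z))=1} W(n)$, which splits as
\[
S(\mathcal{A},\mathcal{P},z) - \tfrac{1}{2}\sum_{z \le q < z_1} S(\mathcal{A}_q, \mathcal{P}, z) =: S_1 - \tfrac12 S_2.
\]

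First I would handle $S_1$ by the lower-bound linear sieve (Theorem~\ref{Jurkat}). The density function is $g(p) = \tfrac{1}{p-1}$ for odd $p \nmid b$ (the number of residues $n \bmod q$ with $2^{k(N)}n + b \equiv 0$ is governed by the progression $2^{k(N)}n \equiv -b$, giving the usual $1/(p-1)$ after accounting for $(q, 2^{k(N)}) = 1$), so condition~\eqref{condition_epsilon} holds with $L = O(1)$, and $X = \mathrm{li}(N')/\varphi(2^{k(N)}) \cdot (1 + o(1))$ by Theorem~\ref{guo} — this is exactly where Elliott's result replaces Bombieri--Vinogradov, and the constraint $2^{k(N)} \le N^{1/87}$ (well inside $N^{2/5}\exp(-(\log\log N)^3)$) guarantees the error $\sum_{d < D} |r(d)|$ is $O(N'/(\log N')^{A})$ when $D \le x^{1/2}q^{-1}(\log x)^{-B}$; translating $x = N$, $q = 2^{k(N)}$ this gives a usable level of distribution $D = N'^{\,\alpha_0}$ with $\alpha_0$ close to (but slightly below) $\tfrac12$ after losing a factor from $q$. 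With $s = \log D/\log z$ one gets $S_1 \ge X V(z)(f(s) + o(1))$ using $f(s) = 2e^\gamma \log(s-1)/s$ from Lemma~\ref{initial_value}.

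For $S_2$ I would apply the upper-bound linear sieve to each $S(\mathcal{A}_q,\mathcal{P},z)$, bound the main term by $X V(z) \cdot F(\cdot)$ summed against $\sum_{z \le q < z_1} \tfrac{g(q)}{1-g(q)}$, and then — this is the delicate part — estimate the resulting sum over $q$. The naive bound is too lossy; Chen's insight is to \emph{switch}: write $S_2 = \sum_{q} \#\{(n, q) : n \in \mathcal{A},\ q \mid n,\ q \in [z,z_1),\ (n/q \text{ sieved})\}$ and reinterpret this as counting primes $p = 2^{k(N)} q m + b$ with $m$ running over a sieved set, so that the inner sum becomes a sieve of the sequence $\{2^{k(N)} q m + b\}$ of level depending on $N'/q$. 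One bounds this bilinear-looking quantity by combining the linear sieve upper bound with a Bombieri--Vinogradov-type input for the doubled modulus $2^{k(N)} q$, using that $q < z_1 = N'^{1/2}$ keeps the combined modulus below the Elliott range, and then sums over $q$ via Mertens' theorem, producing a factor $\log(\cdot)/\log(\cdot)$ of logarithms. The upshot is $\tfrac12 S_2 \le X V(z)\big(\tfrac12 \int \cdots + o(1)\big)$ for an explicit integral in the sieve functions.

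The final step is numerical: choosing the exponents $a = 1/87$, $z = N'^{85/688}$, $z_1 = N'^{1/2}$ (so $s$ and the auxiliary ratio take specific values making $f(s)$ and the $S_2$-integral explicitly computable via Lemma~\ref{initial_value}), one checks $S_1 - \tfrac12 S_2 \ge \delta \cdot X V(z)$ for a positive $\delta$, then uses $XV(z) \asymp \prod_{p>2}(1 - \tfrac{1}{(p-1)^2}) \prod_{2<p\mid b}\tfrac{p-1}{p-2} \cdot \tfrac{N'}{(\log N')(\log N)}$ (the singular series emerging from $V(z)$ and Mertens, with $\log N$ coming from $\varphi(2^{k(N)})^{-1}\mathrm{li}(N')$ and the relation $\log N \asymp \log N'$ up to the factor $a$), and finally absorbs the $W(n) > 0 \Rightarrow$ at-most-two-prime-factors combinatorics to conclude $n_2(N) \ge 0.0016 \cdot (\text{that product})$. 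I expect the main obstacle to be the $S_2$/switching estimate: making the level-of-distribution bookkeeping for the modulus $2^{k(N)} q$ fit inside Theorem~\ref{guo}'s range while keeping the constant large enough that $S_1 - \tfrac12 S_2$ stays positive is what forces $a$ as small as $1/87$ and pins down the exponent $85/688$, and verifying the three-large-prime-factors exclusion rigorously in the presence of the extra factor $2^{k(N)}$ requires care about whether $N' < z^3$ actually holds on the nose.
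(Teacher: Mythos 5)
There is a genuine gap, and it sits exactly where you predicted the "delicate part" would be. You replace Chen's weight by the simple Kuhn-type weight $1-\tfrac12\sum_{z\le q<z_1,\,q\mid n}1$ with $z_1=N'^{1/2}$. Combinatorially this does force $P_2$ (modulo the prime-power issue: $n=q^2m$ with $q\in[z,z_1)$ still has positive weight, which is why the paper's weight \eqref{n_2_weight} counts $q^k\|n$ with multiplicity $k$ and bounds the excess by $O(N'/z)$). But analytically the term $\sum_{z\le q<N'^{1/2}}S(A_q,\mathcal P,z)$ cannot be estimated: the total level of distribution supplied by Theorem~\ref{guo} is $N^{1/2}2^{-k(N)}(\log N)^{-B}\approx N'^{1/2-a/(2(1-a))}$, so once $q$ is anywhere near $N'^{1/2}$ the modulus $2^{k(N)}q$ alone already exhausts (indeed exceeds) that range and there is no room left for the sieve divisors $d$. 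Your proposed rescue --- ``switching'' on this sum and invoking ``a Bombieri--Vinogradov-type input for the doubled modulus $2^{k(N)}q$, using that $q<N'^{1/2}$ keeps the combined modulus below the Elliott range'' --- is exactly the step that fails: it is not below the Elliott range, and no equidistribution theorem of the required strength is available for individual moduli of size $N^{1/2}$.

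The paper's proof avoids this by keeping the linearly-sieved primes in the shorter range $z\le q<y=N'^{1/3}$ (so each $D_q=D/q$ stays usable, cf.\ Theorem~\ref{upper_bound2}) and adding a third term to the weight that counts representations $n=p_1p_2p_3$ with $z\le p_1<y\le p_2\le p_3$. It is \emph{that} term, not the $\sum_q S(A_q,\cdot)$ term, which is handled by switching: one passes to the set $B=\{2^{k(N)}p_1p_2p_3+b\}$ and bounds $S(B,\mathcal P,y)$ from above (Theorem~\ref{upper_bound_3}), with the remainder controlled not by Theorem~\ref{guo} but by the bilinear form inequality of Lemma~\ref{bilinear_form_inequality}, exploiting the factorization $n=p_1\cdot(p_2p_3)$ with $p_1$ confined to short intervals. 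This bilinear ingredient is entirely absent from your outline, and without it the switched sum cannot be estimated; the choice $y=N'^{1/3}$ and the resulting three-term weight are not optional refinements but the mechanism that makes the $P_2$ conclusion reachable at all.
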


	\begin{proof}
Throughout the proof, let \( p \) and \( q \) denote primes. Define
\begin{equation}\label{definition_y}
	N' = \frac{N}{2^{k(N)}}, \quad 
	y = N'^{1/3}, \quad 
	z = N'^{\delta},
\end{equation}
where \( 0 < \delta < \frac{1}{3} \) is a parameter to be chosen later.

We define the weight function
\begin{equation}\label{n_2_weight}
	w(n) = 1 
	- \frac{1}{2} \sum_{\substack{z \leq q < y \\ q^k \,\|\, n}} k
	- \frac{1}{2} \sum_{\substack{p_1 p_2 p_3 = n \\ z \leq p_1 < y \leq p_2 \leq p_3}} 1.
\end{equation}

Let \( \mathcal{P} \) denote the set of primes \( p > 2 \) with \( (p, b) = 1 \), and define
\[
P(z) := \prod_{\substack{p \in \mathcal{P} \\ p < z}} p = \prod_{\substack{(p, 2b) = 1 \\ p < z}} p.
\]

Consider the set
\begin{equation}\label{definition_A_set}
	A := \left\{ \frac{p - b}{2^{k(N)}} : 0 < p - b \leq N,\ 2^{k(N)} \,\|\, (p - b) \right\},
\end{equation}
and let \( A_d \) denote the subset of \( A \) consisting of elements divisible by \( d \). Then,
\[
|A| = \pi(N + b, 2^{k(N)}, b) - \pi(N + b, 2^{k(N)+1}, b).
\]
For each \( n = \frac{p - b}{2^{k(N)}} \in A \), we have \( (n, 2b) = 1 \). Indeed, if some odd prime \( q \mid (n, b) \), then \( q \mid p \), which implies \( p = q \leq b \), contradicting \( p > b \).
	
If \( n \leq N' \), \( (n, 2b) = (n, P(z)) = 1 \), and \( w(n) > 0 \), then \( n \) is either \( 1 \), a prime \( p_1 \geq z \), or a product \( p_1 p_2 \), where \( p_1, p_2 \) are primes satisfying \( z \leq p_1 < y \leq p_2 \).
Define
\[
S := \left\{ 1,\ p_1,\ p_1 p_2 : p_1, p_2 \geq z,\ \text{both } p_1 \text{ and } p_2 \text{ prime} \right\}.
\]

Then we have
\begin{align*}
	n_2(N) &\geq \sum_{\substack{n \in A \\ n \in S \\ (n, P(z)) = 1}} 1 
	\geq \sum_{\substack{n \in A \\ n \in S \\ (n, P(z)) = 1}} w(n) 
	\geq \sum_{\substack{n \in A \\ (n, P(z)) = 1}} w(n) \\
	&= \sum_{\substack{n \in A \\ (n, P(z)) = 1}} 1 
	- \frac{1}{2} \sum_{\substack{n \in A \\ (n, P(z)) = 1}} 
	\sum_{\substack{z \leq q < y \\ q^k \parallel n}} k 
	- \frac{1}{2} \sum_{\substack{n \in A \\ (n, P(z)) = 1}} 
	\sum_{\substack{p_1 p_2 p_3 = n \\ z \leq p_1 < y \leq p_2 \leq p_3}} 1.
\end{align*}
Now, we express these three sums as sieving functions.

The first sum becomes
\[
\sum_{\substack{n \in A \\ (n, P(z)) = 1}} 1   = S(A, \mathcal{P}, z).
\]

Next, we split the second sum into two parts 
\begin{equation}\label{second_sum}
	\sum_{\substack{n \in A \\ (n, P(z)) = 1}} \sum_{\substack{z \leq q < y \\ q^k \Vert n}} k
	= \sum_{\substack{n \in A \\ (n, P(z)) = 1}} \sum_{\substack{z \leq q < y \\ q \mid n}} 1
	+ \sum_{\substack{n \in A \\ (n, P(z)) = 1}} \sum_{\substack{z \leq q < y \\ q^k \Vert n,\ k \geq 2}} (k - 1).
\end{equation}
The first part in \eqref{second_sum} is given by 
\[
\sum_{\substack{n \in A \\ (n, P(z)) = 1}} \sum_{\substack{z \leq q < y \\ q \mid n}} 1
= \sum_{\substack{z \leq q < y}} S(A_q, \mathcal{P}, z).
\]
The second part in \eqref{second_sum} can be estimated as
\begin{align*}
	\sum_{\substack{n \in A \\ (n, P(z)) = 1}} \sum_{\substack{z \leq q < y \\ q^k \Vert n,\ k \geq 2}} (k - 1) 
	&\leq \sum_{z \leq q < y} \sum_{k = 2}^\infty \sum_{\substack{n \leq N' \\ q^k \mid n}} (k - 1) \\
	&\leq N' \sum_{z \leq q < y} \sum_{k = 2}^\infty \frac{k - 1}{q^k} 
	= N' \sum_{z \leq q < y} \frac{1}{(q - 1)^2} 
	< \frac{2N'}{z} = 2N'^{7/8}.
\end{align*}

\label{def_B}
For the third sum, let \( B \) be the set of integers of the form
\[
2^{k(N)} p_1 p_2 p_3 + b,
\]
where the primes \( p_1, p_2, p_3 \) satisfy
\[
z \leq p_1 < y \leq p_2 \leq p_3, \quad 2^{k(N)} p_1 p_2 p_3 \leq N.
\]
An element \( p \in B \) is prime if and only if
\[
\frac{p - b}{2^{k(N)}} = p_1 p_2 p_3 \in A, \quad \text{with } z \leq p_1 < y \leq p_2 \leq p_3.
\]

Now we compute the  third sum 
\begin{align*}
	\sum_{\substack{n \in A \\ (n, P(z)) = 1}} \sum_{\substack{p_1 p_2 p_3 = n \\ z \leq p_1 < y \leq p_2 \leq p_3}} 1
	&= \sum_{\substack{p_1 p_2 p_3 \in A \\ z \leq p_1 < y \leq p_2 \leq p_3}} 1
	= \sum_{p \in B} 1 \\
	&= \sum_{\substack{p \in B \\ p < y}} 1 + \sum_{\substack{p \in B \\ p \geq y}} 1
	\leq  y + \sum_{\substack{n \in B \\ (n, P(y)) = 1}} 1 \\
	&= N'^{1/3} + S(B, \mathcal{P}, y).
\end{align*}

Thus, we obtain the lower bound for \( n_2(N) \) in terms of sieving functions 
\begin{align}\label{sieving_functions_final_n_2}
	n_2(N) \geq \ 
	& S(A, \mathcal{P}, z)
	- \frac{1}{2} \sum_{z \leq q < y} S(A_q, \mathcal{P}, z)
	- \frac{1}{2} S(B, \mathcal{P}, y) \nonumber \\
	& - N'^{7/8} - \frac{1}{2} N'^{1/3}.
\end{align}

In the next section, we prove bounds for the sieving functions. From Theorems~\ref{lower_bound_S(A,P,z)}, \ref{upper_bound2}, and \ref{upper_bound_3}, we obtain the following estimates 
\[
S(A, \mathcal{P}, z) \geq \left( e^\gamma C_1 + O\left((\log N)^{-1/6}\right) \right) \frac{N' V(z)}{\log N},
\]
\[
\sum_{\substack{z \leq q < y}} S(A_q, \mathcal{P}, z) 
\leq \left( 4 e^\gamma C_2 + O\left((\log N)^{-1/6}\right) \right) \frac{N' V(z)}{\log N},
\]
\begin{align*}
S(B, \mathcal{P}, y) 
&\leq \left( 4\delta C_\delta e^\gamma + O\left((\log N)^{-1/6}\right) \right) \frac{N' V(z)}{\log N'} \\
&\leq \left( \frac{4\delta C_\delta e^\gamma}{1 - a} + O\left((\log N)^{-1/6}\right) \right)   \frac{N' V(z)}{\log N}.
\end{align*}

Here the constants are defined as follows 
\[
C_1 := \frac{4\delta(1 - a)}{1 - 2a} \log\left( \frac{1 - 2a}{2\delta(1 - a)} - 1 \right),
\]
\[
C_2 := \frac{\delta(1 - a)}{1 - 2a} \left( 
\log \frac{1}{3\delta} - 
\log \left( \frac{1 - \frac{2}{3} \cdot \frac{1 - a}{1 - 2a}}{1 - 2\delta \cdot \frac{1 - a}{1 - 2a}} \right)
\right),
\]
\[
C_\delta := \int_{\delta}^{1/3} \frac{\log(2 - 3\beta)}{\beta(1 - \beta)} \, d\beta.
\]

Applying these bounds to \eqref{sieving_functions_final_n_2}, we obtain 
\begin{align}\label{expression_Final}
n_2(N) \geq \left[ C_1 - 2C_2 - \frac{2\delta C_\delta}{1 - a} + O\left((\log N)^{-1/6}\right) \right] \frac{e^\gamma N' V(z)}{\log N}.
\end{align}

To ensure a positive lower bound for \( s(N, 2^{k(N)}, b) \), we seek values of \( \delta \) and \( a \) such that the main term in \eqref{expression_Final} is positive. Define the function
\begin{align}\label{f_a_d_lowerbound}
	f_{a,\delta}  
	&:= C_1 - 2C_2 - \frac{2\delta C_\delta}{1 - a} \nonumber \\
	&= \frac{4\delta(1 - a)}{1 - 2a} \log\left( \frac{1 - 2a}{2\delta(1 - a)} - 1 \right)  
	  - \frac{2\delta(1 - a)}{1 - 2a} \left( 
	\log \frac{1}{3\delta} - 
	\log \left( \frac{1 - \frac{2}{3} \cdot \frac{1 - a}{1 - 2a}}{1 - 2\delta \cdot \frac{1 - a}{1 - 2a}} \right)
	\right) 
	- \frac{2\delta C_\delta}{1 - a}.
\end{align}
We choose \( a \) and \( \delta \) to satisfy the conditions in Theorem~\ref{lower_bound_S(A,P,z)} and Theorem~\ref{upper_bound2}:
\[
\frac{1}{8} - \frac{a}{8(1 - a)} \leq \delta < \frac{1}{6} - \frac{a}{2(1 - a)}, \quad \text{and} \quad 0 < a < \frac{1}{10}.
\]
To maximize the lower bound in \eqref{f_a_d_lowerbound}, we take the smallest admissible value of \( \delta \), namely
\[
\delta = \frac{1}{8} - \frac{a}{8(1 - a)}.
\]
Setting \( a = \frac{1}{87} \), we get \( \delta = \frac{85}{688} \). A numerical evaluation yields
\[
f_{\frac{1}{87},\frac{85}{688}}  \geq 0.0001055968\ldots > 0.
\]

Hence, for sufficiently large \( N \),
\[
C := f_{\frac{1}{87},\frac{85}{688}}  + O\left((\log N)^{-1/6}\right) \geq  0.0001 > 0.
\]
Substituting \( V(z) \) via \eqref{function_V(z)} and simplifying the constants,  the constant in \eqref{expression_Final} becomes
\[
\frac{2C}{\delta} = 0.00161882 > 0.0016.
\]
 Therefore, for sufficiently large \( N \), we obtain the lower bound
\[
n_2(N) \geq 0.0016 \prod_{p > 2} \left( 1 - \frac{1}{(p - 1)^2} \right) 
\prod_{  p \mid b} \frac{p - 1}{p - 2} 
\cdot \frac{N'}{ ( \log  N')(\log N) }.
\]

	\end{proof}

Let \( s_K(N) \) denote the number of primes \( p \leq N + b \) such that \( p - b \) is divisible by \( 2^{k(N)} \) and has at most \( K \) odd prime factors. In particular, \( s_2(N) \) counts those with
\[
\frac{p - b}{2^{k(N)}} = 2^m P_2 \quad \text{for some } m \geq 0,
\]
while \( n_2(N) \) counts only those with exact divisibility,
\[
\frac{p - b}{2^{k(N)}} = P_2.
\]
To handle the broader range counted by \( s_2(N) \), we adjust the weight function. As we will show, any bound of the form
\[
n_2(N) \geq A_1 \frac{N'}{(\log N')(\log N)}
\]
implies
\[
s_2(N) \geq A_2 \frac{N'}{(\log N')(\log N)} \quad \text{with } A_2 = 2A_1,
\]
under the same conditions on \( 2^{k(N)} \).

\begin{theorem}\label{proposition_computation2}
	Let \( b > 0 \) be an odd integer, and let \( k: \mathbb{N} \to \mathbb{N} \) be an arithmetic  function such that for some constant \( 0 < c < 1 \), we have
\[
cN^a < 2^{k(N)} \leq N^a , \quad \text{where } a = \frac{1}{87}.
\]

Define \( N' := N / 2^{k(N)} \). Then for sufficiently large $N$, 
\[	s_2(N ) \geq 0.0032 \prod_{p > 2} \left(1 - \frac{1}{(p - 1)^2} \right) \prod_{2 < p \mid b} \frac{p - 1}{p - 2} \cdot \frac{N'}{(\log N)(\log N')}.\]
\end{theorem}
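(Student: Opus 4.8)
The plan is to mirror the proof of Theorem~\ref{main_theorem} (i.e.\ Theorem~\ref{intro_theorem_n2}) almost verbatim, changing only the set we sieve and the bookkeeping that reflects the weakened divisibility condition. First I would set $N' = N/2^{k(N)}$, $y = N'^{1/3}$, $z = N'^{85/688}$, and use the \emph{same} weight function $w(n)$ as in \eqref{n_2_weight}, but now applied to the enlarged set
\[
A' := \left\{ \frac{p - b}{2^{k(N)}} : 0 < p - b \leq N,\ 2^{k(N)} \mid (p - b) \right\},
\]
in which divisibility by $2^{k(N)}$ is required but not exact divisibility. The key arithmetic observation is that an element $n = (p-b)/2^{k(N)}$ of $A'$ with $(n, P(z)) = 1$ and $w(n) > 0$ is, as before, either $1$, a prime $p_1 \geq z$, or a product $p_1 p_2$ with $z \leq p_1 < y \leq p_2$ --- but now $n$ may also carry extra factors of $2$, so $p - b = 2^{k(N)+m} P_2$ for some $m \geq 0$, which is exactly what $s_2(N)$ counts. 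Hence the chain of inequalities leading to \eqref{sieving_functions_final_n_2} goes through identically with $A$ replaced by $A'$, giving
\[
s_2(N) \geq S(A', \mathcal{P}, z) - \tfrac12 \sum_{z \leq q < y} S(A'_q, \mathcal{P}, z) - \tfrac12 S(B', \mathcal{P}, y) - N'^{7/8} - \tfrac12 N'^{1/3},
\]
where $B'$ is the analogue of $B$ allowing an additional power of $2$ (the bound $|B' \cap [0,y)| \leq y$ and the $N'^{7/8}$ tail estimate are unchanged since they only use $n \leq N'$).

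The heart of the argument is the comparison of main terms. Since $2 \notin \mathcal{P}$, every set $A'_d$ (for $d \mid P(z)$, hence $d$ odd) has
\[
|A'_d| = \pi(N+b, 2^{k(N)} d, b) = \frac{\operatorname{li}(N+b)}{\varphi(2^{k(N)} d)} + r'(d),
\]
whereas in the exact-divisibility case $|A_d| = \left(\frac{1}{\varphi(2^{k(N)}d)} - \frac{1}{\varphi(2^{k(N)+1}d)}\right)\operatorname{li}(N+b) + r(d)$. Because $\varphi(2^{k(N)+1}d) = 2\varphi(2^{k(N)}d)$, the main term of $|A'_d|$ is exactly twice that of $|A_d|$, and the density function $g(d)$ --- which only depends on the ratio $|A'_d|/|A'|$ versus $|A_d|/|A|$ --- is \emph{the same} for both sequences. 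Consequently the sieve functions satisfy $S(A', \mathcal{P}, z) = 2\,S(A, \mathcal{P}, z)$, $\sum_{z\le q<y} S(A'_q,\mathcal{P},z) = 2\sum_{z\le q<y} S(A_q,\mathcal{P},z)$, and $S(B',\mathcal{P},y) = 2\,S(B,\mathcal{P},y)$ up to the same admissible error terms, since $B'$'s main term (summed over the allowed powers $m \geq 0$ with $2^{k(N)+m} p_1 p_2 p_3 \leq N$) again doubles relative to $B$'s. The error terms $\sum_{d < D} |r'(d)|$ are controlled by exactly the same appeal to Theorem~\ref{guo} (Elliott's theorem with $\lambda < 5/2$), applied with $q = 2^{k(N)} \leq N^{1/87}$, which comfortably satisfies the hypotheses.

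Therefore every bound in Theorem~\ref{main_theorem}'s proof gets multiplied by $2$: from \eqref{expression_Final} we obtain
\[
s_2(N) \geq \left[ C_1 - 2C_2 - \frac{2\delta C_\delta}{1-a} + O\!\left((\log N)^{-1/6}\right)\right] \frac{2 e^\gamma N' V(z)}{\log N},
\]
with the \emph{same} constants $C_1, C_2, C_\delta$ and the same choice $a = \tfrac{1}{87}$, $\delta = \tfrac{85}{688}$, so $f_{1/87,\,85/688} \geq 0.0001055968\ldots$ as before. Substituting $V(z)$ via \eqref{function_V(z)} and the factor of $2$, the leading constant is $\frac{4C}{\delta} = 2 \cdot 0.00161882\ldots > 0.0032$, which yields the claimed bound. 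The only point requiring genuine care --- and the one I would write out most carefully --- is verifying that the density $g$ and the admissible level $D$ are genuinely unchanged when passing from $A$ to $A'$, i.e.\ that doubling the main term of every $|A'_d|$ simultaneously leaves $g(p) = \frac{1}{p-1}$ (for $p \in \mathcal{P}$) and the condition \eqref{condition_epsilon} with the same $L$ intact; this is what legitimizes quoting Theorems~\ref{lower_bound_S(A,P,z)}, \ref{upper_bound2}, and \ref{upper_bound_3} verbatim rather than re-deriving them. Everything else is a routine transcription, and no new sieve input is needed.
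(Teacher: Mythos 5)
There is a genuine gap, and it sits precisely at the step you flag as ``the key arithmetic observation.'' With the \emph{unmodified} weight $w(n)$ from \eqref{n_2_weight}, the implication ``$n \in A'$, $(n,P(z))=1$, $w(n)>0$ $\Rightarrow$ $n$ has at most two odd prime factors'' is false. Take $n = 2^m p_1 p_2 p_3$ with $m \geq 1$ and $z \leq p_1 < y \leq p_2 \leq p_3$: the third term of $w(n)$ detects only \emph{exact} representations $p_1p_2p_3 = n$, so it vanishes for such $n$, while the middle term contributes only $\tfrac12$ (from $q = p_1$); hence $w(n) = \tfrac12 > 0$ even though $p - b = 2^{k(N)+m}p_1p_2p_3$ has three odd prime factors and must not be counted by $s_2(N)$. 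The inequality $\sum_{n \in A',\, n \in S} w(n) \geq \sum_{n \in A'} w(n)$ therefore fails, and these bad $n$ are not negligible (they contribute on the order of the main term). The paper repairs this by replacing $w$ with the modified weight $w'(n)$ of \eqref{w(n)_modified}, which subtracts a separate term for each representation $n = 2^m p_1p_2p_3$, $0 \leq m \leq M$, with $m$-dependent thresholds $y^{(m)} = (N'/2^m)^{1/3}$ (using that $p_1 \leq (p_1p_2p_3)^{1/3} \leq y^{(m)}$).

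The second, consequent gap is your claim that the third sieve term ``again doubles.'' Once the weight is corrected, that term becomes $\sum_{m=0}^{M} S(B_m, \mathcal{P}, y^{(m)})$, and bounding it by twice the $m=0$ bound is not a formality: one must show $\frac{C_{\delta,m}}{\log N'_m} \leq \frac{C_\delta}{\log N'}$ for all $m$, where $C_{\delta,m} = \int_{\delta \log N'/\log N'_m}^{1/3} \frac{\log(2-3\beta)}{\beta(1-\beta)}\,d\beta$, before the geometric series $\sum 2^{-m} < 2$ can be invoked. This is the paper's Lemma~\ref{lemma_S/T}, proved via a monotonicity argument for $F(x) = x\int_{x\delta}^{1/3} f(t)\,dt$ that requires $\delta \geq \frac{1}{3e}$ --- a condition that $\delta = \frac{85}{688}$ satisfies only by a narrow margin, so it cannot be waved through. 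Your treatment of the first two sieve terms (doubling of the main terms of $|A'_d|$ via $\varphi(2^{k+1}d) = 2\varphi(2^k d)$, unchanged density $g$, and the appeal to Theorem~\ref{guo} for the remainders) does match the paper and is correct; the missing ingredients are the modified weight and Lemma~\ref{lemma_S/T}.
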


 \begin{proof}
We proceed similarly to the proof of Theorem~\ref{main_theorem}, with the necessary modifications.

Define
\[
N'_m := \frac{N'}{2^m}, \qquad
y^{(m)} :=   {N'_m } ^{1/3}, \qquad
M := \left\lfloor \frac{(1 - 3\delta) \log N'}{\log 2} \right\rfloor.
\]
We begin by replacing the set \( A \) from \eqref{definition_A_set} with
\[
A' = \left\{ \frac{p - b}{2^{k(N)}} : 0 < p - b \leq N,\ 2^{k(N)} \mid p - b \right\}.
\]
In the weight function \eqref{n_2_weight}, we additionally subtract contributions from representations of the form
\[
\frac{p - b}{2^{k(N)}} = 2^m p_1 p_2 p_3.
\]
If \( n = 2^m p_1 p_2 p_3 \) with \( z \leq p_1 < y \leq p_2 \leq p_3 \), then it must follow that
\[
z \leq p_1 < y^{(m)} \leq p_2 \leq p_3.
\]
We then use the modified weight function \( w'(n) \), in place of \( w(n) \) from \eqref{n_2_weight}, given by
\begin{equation}\label{w(n)_modified}
	w'(n) = 1 
	- \frac{1}{2} \sum_{\substack{z \leq q < y \\ q^k \mid\mid n}} k 
	- \frac{1}{2} \sum_{m = 0}^{M} \sum_{\substack{p_1 p_2 p_3 2^m = n \\ z \leq p_1 < y^{(m)} \leq p_2 \leq p_3}} 1,
\end{equation}
where \( y \) and \( z \) are as defined in \eqref{definition_y}. Note that \( 2^M \leq N'^{1 - 3\delta} \).
Then we have
\begin{align}\label{eq_s_2}
	s_2(N) 
	&\geq S(A', \mathcal{P}, z) 
	- \frac{1}{2} \sum_{\substack{z \leq q < y}} S(A'_q, \mathcal{P}, z) 
	- \frac{1}{2} \sum_{m = 0}^M \left[ \left( \frac{N'}{2^m} \right)^{1/3} + S(B_m, \mathcal{P}, y^{(m)}) \right]
	- N'^{7/8}\nonumber \\
		&\geq  S(A', \mathcal{P}, z) 
		- \frac{1}{2} \sum_{\substack{z \leq q < y}} S(A'_q, \mathcal{P}, z) 
		- \frac{1}{2} \sum_{m = 0}^M     S(B_m, \mathcal{P}, y^{(m)})  -N'^{1/3}
		- N'^{7/8}
\end{align}
where
\[
B_m = \left\{ 2^{k(N) + m} p_1 p_2 p_3 + b : z \leq p_1 < y^{(m)} \leq p_2 \leq p_3,\ p_1 p_2 p_3 \leq N'_m \right\}.
\]
Recall from Theorems~\ref{lower_bound_S(A,P,z)} and~\ref{upper_bound2} that
\[
S(A, \mathcal{P}, z) \geq \left( e^\gamma C_1 + O\left((\log N)^{-1/6}\right) \right) \frac{N' V(z)}{\log N},
\]
\[
\sum_{\substack{z \leq q < y}} S(A_q, \mathcal{P}, z) 
\leq \left( 4 e^\gamma C_2 + O\left((\log N)^{-1/6}\right) \right) \frac{N' V(z)}{\log N}.
\]
Since
\[
\frac{1}{\varphi(2^k)} = 2\left( \frac{1}{\varphi(2^k)} - \frac{1}{\varphi(2^{k+1})} \right),
\]
the main terms in \( |A'| \) and \( |A'_q| \), as given by the Prime Number Theorem, are twice those in \( |A| \) and \( |A_q| \), respectively.

Consequently, the bounds for \( S(A', \mathcal{P}, z) \) and \( \sum_{z \leq q < y} S(A_q', \mathcal{P}, z) \) have main terms that are twice those of the corresponding bounds for \( S(A, \mathcal{P}, z) \) and \( \sum_{z \leq q < y} S(A_q, \mathcal{P}, z) \). Thus, for \( N \) sufficiently large,
\begin{equation}\label{sum_1}
	S(A', \mathcal{P}, z) \geq \left( 2 e^\gamma C_1 + O\left((\log N)^{-1/6}\right) \right) \frac{N' V(z)}{\log N},
\end{equation} 
\begin{equation}\label{sum_2}
	\sum_{\substack{z \leq q < y}} S(A'_q, \mathcal{P}, z) 
	\leq \left( 8 e^\gamma C_2 + O\left((\log N)^{-1/6}\right) \right) \frac{N' V(z)}{\log N}.
\end{equation}

We now estimate the sum
\[
\sum_{m = 0}^M S(B_m, \mathcal{P}, y^{(m)}).
\]
By an argument analogous to Theorem~\ref{upper_bound_3}, we have
\[
S(B_m, \mathcal{P}, y^{(m)}) 
\leq \left( 4\delta e^\gamma + O\left((\log N)^{-1/6}\right) \right) \frac{C_{\delta, m} N'_m V(z)}{\log N'_m},
\]
where
\[
C_{\delta, m} := \int_{\delta \frac{\log N'}{\log N'_m}}^{1/3} \frac{\log(2 - 3\beta)}{\beta(1 - \beta)} \, d\beta.
\]

The constant \( C_{\delta, m} \) arises from evaluating the double integral (analogous to \eqref{integral}) using the change of variables \( t = N'_m{}^\alpha \) and \( u = N'_m{}^\beta \):
\begin{align*}
	\int_{z}^{y^{(m)}} \int_{y^{(m)}}^{\left( \frac{N'_m}{u} \right)^{1/2}} \frac{1}{\log(N'_m / ut)} \, d(\log \log t )\, d(\log \log u)
	&= \int_{N'^\delta}^{N'_m{}^{1/3}} \int_{N'_m{}^{1/3}}^{\left( \frac{N'_m}{u} \right)^{1/2}} \frac{1}{\log(N'_m / ut)} \, d(\log \log t )\, d(\log \log u) \\
	&= \frac{1}{\log N'_m} \int_{\delta \frac{\log N'}{\log N'_m}}^{1/3} \int_{1/3}^{\frac{1 - \beta}{2}} \frac{d\alpha \, d\beta}{\alpha \beta (1 - \alpha - \beta)} \\
	&= \frac{C_{\delta, m}}{\log N'_m}.
\end{align*}
Summing over \( m \), we obtain
\[	\sum_{m = 0}^M S(B_m, \mathcal{P}, y^{(m)}) 
\leq  \sum_{m = 0}^M \frac{C_{\delta, m} N'_m}{\log N'_m} 
\left( 4\delta e^\gamma + O\left((\log N)^{-1/6}\right) \right) V(z).\]
Compare this with the bound from Theorem~\ref{upper_bound_3}:
\begin{align}\label{compare}
S(B, \mathcal{P}, y) 
&\leq \left( 4\delta C_\delta e^\gamma + O\left((\log N)^{-1/6}\right) \right) \frac{N' V(z)}{\log N'} \nonumber\\
&\leq \left( \frac{4\delta C_\delta e^\gamma}{1 - a} + O\left((\log N)^{-1/6}\right) \right)   \frac{N' V(z)}{\log N}.
\end{align}

Define
\[
S := \sum_{m = 0}^{M} \frac{C_{\delta, m} N'_m}{\log N'_m}, \quad 
T := \frac{C_{\delta} N'}{\log N'}.
\]
We now show the following elementary lemma, which establishes that \( S/T < 2 \).

\begin{lemma}\label{lemma_S/T}
	Suppose \( \delta \geq \frac{1}{3e} \). Then we have
	\[
	S \leq 2\left(1 - \frac{1}{2^M}\right) T.
	\]
\end{lemma}

\begin{proof}
	Let \( f(t) = \frac{\log(2 - 3t)}{t(1 - t)} \), and define \( g(t) = t f(t) = \frac{\log(2 - 3t)}{1 - t} \). Let \( x = \frac{\log N'}{\log(N'_m)} \). We want to show that
	\[
	x \int_{x\delta}^{1/3} f(t)\,dt \leq \int_{\delta}^{1/3} f(t)\,dt \quad \text{for } 1 \leq x \leq \frac{1}{3\delta}.
	\]
	
	A direct computation gives
	\[
	g'(t) = \frac{-\frac{3(1 - t)}{2 - 3t} + \log(2 - 3t)}{(1 - t)^2} < 0,
	\]
	so \( g(t) \) is strictly decreasing.	
Define
\[
I(x) := \int_{x\delta}^{1/3} f(t)\,dt, \quad 
F(x) := x I(x), \quad \text{for } 1 \leq x \leq \frac{1}{3\delta}.
\]
Then
\begin{align*}
	F'(x) 
	&= I(x) + x \cdot \frac{d}{dx} I(x) 
	= I(x) - x\delta f(x\delta) \\
	&= \int_{x\delta}^{1/3} \frac{g(t)}{t}\,dt - x\delta f(x\delta) \\
	&\leq g(x\delta) \int_{x\delta}^{1/3} \frac{1}{t}\,dt - x\delta f(x\delta) \\
	&= x\delta f(x\delta) \left( \log\left(\frac{1}{3x\delta}\right) - 1 \right) \leq 0,
\end{align*}
where the last inequality uses the assumption \( \delta \geq \frac{1}{3e} \), so that \( 3x\delta \geq \frac{1}{e} \) and hence \( \log\left(\frac{1}{3x\delta}\right) \leq 1 \).

Therefore, \( F(x) \leq F(1) \) for all \( 1 \leq x \leq \frac{1}{3\delta} \), and thus
\[
\frac{C_{\delta, m}}{\log N'_m} \leq \frac{C_\delta}{\log N'} \quad \text{for all } 1 \leq m \leq M.
\]
This implies
\[
S = \sum_{m=0}^{M} \frac{C_{\delta, m} N'_m}{\log N'_m} 
\leq \frac{C_\delta}{\log N'} \sum_{m=0}^{M} N'_m 
= T \sum_{m=0}^{M} \frac{1}{2^m} 
= 2\left(1 - \frac{1}{2^{M+1}}\right) T.
\]

\end{proof}

Therefore, by Lemma~\ref{lemma_S/T} and comparison with \eqref{compare}, we obtain
\begin{equation} \label{sum_3}
	\sum_{m = 0}^M S(B_{m}, \mathcal{P}, y^{(m)}) 
	\leq  \left( \frac{8\delta  C_\delta e^\gamma}{1 - a} \left(1 - \frac{1}{2^M}\right)  + O\left((\log N)^{-1/6} \right) \right) \frac{N' V(z)}{\log N}.
\end{equation}
Recall from \eqref{expression_Final} that
\[
n_2(N) 
\geq \left(f_{a,\delta}  + O\left((\log N)^{-1/6} \right) \right) \frac{e^\gamma N' V(z)}{\log N},
\]
where
\[
f_{a,\delta}  := C_1 - 2 C_2 - \frac{2 \delta C_\delta}{1 - a}.
\]

By combining \eqref{eq_s_2}, \eqref{sum_1}, \eqref{sum_2}, and \eqref{sum_3}, we obtain
\[
s_2(N) 
\geq \left(f'_{a,\delta}  + O\left((\log N)^{-1/6} \right) \right) \frac{e^\gamma N' V(z)}{\log N},
\]
where
\[
f'_{a,\delta}  := 2 C_1 - 4 C_2 - \left(1 - \frac{1}{2^M}\right)   \frac{4 \delta C_\delta}{1 - a} > 2 f_{a,\delta} .
\]
We obtain the value of \( f'_{a,\delta}  \) by scaling \( f_{a,\delta}  \) by a factor of \( 2 \). Therefore, we adopt the same parameters as in the previous theorem. Let \( a = \frac{1}{87} \) and \( \delta = \frac{85}{688} \). Note that \( \delta = \frac{85}{688} > \frac{1}{3e} \), so the condition of Lemma~\ref{lemma_S/T} is satisfied. Then,
\[
f'_{a,\delta}  > 2f_{a,\delta} = 0.0002111936\ldots.
\]

Let $N$ be sufficiently large  such that
\[
C' = f'_{a,\delta}  + O\left((\log N)^{-1/6}\right) = 0.0002 > 0.
\]
As before, substituting \( V(z) \) using \eqref{function_V(z)} and collecting constants, the constant in the lower bound for \( s_2(N) \) becomes
\[
\frac{2C'}{\delta} = 0.00323765 > 0.0032.
\]
This completes the proof.
\end{proof}

\subsection{Bounds for the Sieving Functions}

We now derive bounds for the relevant sieving functions, beginning with an expression for \( V(z) \). By Mertens' estimate,
\begin{align}
	V(z) &= \prod_{p \mid P(z)} \left(1 - g(p)\right) \nonumber \\
	&= \prod_{q \mid b} \left(1 - \frac{1}{q - 1}\right)^{-1} \prod_{2 < p < z} \left(1 - \frac{1}{p - 1}\right) \nonumber \\
	&= 2 \prod_{q \mid b} \left(1 - \frac{1}{q - 1}\right)^{-1} 
	\prod_{p > 2} \left(1 - \frac{1}{(p - 1)^2}\right) 
	\cdot \frac{e^{-\gamma}}{\log z} \left(1 + O\left( \frac{1}{\log z} \right)\right),
	\label{function_V(z)}
\end{align}
and it follows that \eqref{condition_epsilon} is satisfied.

	Let \( k: \mathbb{N} \to \mathbb{N} \) be an arithmetic function such that for some   constant \( 0 < c < 1 \),
\begin{equation}\label{condition_2^k}
	c N^a < 2^{k(N)} \leq N^a.
\end{equation} 
Let \( A \) be the set defined in \eqref{definition_A_set}:
\[
A := \left\{ \frac{p - b}{2^{k(N)}} : 0 < p - b \leq N,\ 2^{k(N)} \,\|\, (p - b) \right\}.
\]

We now establish a lower bound for the sieving function \( S(A, \mathcal{P}, z) \).

\begin{theorem}\label{lower_bound_S(A,P,z)}
	Let \( a, \delta \) be constants satisfying
	\[
	0 < a < \frac{2}{5}, \quad \frac{1}{8} - \frac{a}{8(1-a)} \leq \delta < \frac{1}{4} - \frac{a}{4(1-a)}.
	\]
	Let \( N' = \frac{N}{2^{k(N)}} \) and \( z = N'^\delta \).
	Then for  sufficiently large \( N \), we have
	\[
	S(A, \mathcal{P}, z) \geq \left( e^\gamma C_1 + O\left((\log N)^{-1/6}\right) \right) \frac{N' V(z)}{\log N},
	\]
	where
	\[	C_{1 } := \frac{4\delta (1 - a)}{1 - 2a} \log\left(\frac{1 - 2a}{2\delta(1 - a)} - 1\right).\]
\end{theorem}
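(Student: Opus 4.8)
The plan is to apply the lower-bound half of the Jurkat--Richert linear sieve (Theorem~\ref{Jurkat}) to the sequence $A$ defined in \eqref{definition_A_set}, with the sieve parameter $z = N'^\delta$ and a suitable level of distribution $D$. First I would record the standard setup: for $d \mid P(z)$ with $(d,2b)=1$, the counting function $|A_d|$ is
\[
|A_d| = \pi(N+b, 2^{k(N)}d, b) - \pi(N+b, 2^{k(N)+1}d, b),
\]
so the natural density function is $g(d) = \frac{1}{\varphi(d)}$ on primes $p \nmid 2b$ (note $g$ is supported away from $2$ and from primes dividing $b$, matching the definition of $\mathcal P$), and the main term is $g(d) X$ with $X = |A| \sim \bigl(\tfrac{1}{\varphi(2^{k(N)})} - \tfrac{1}{\varphi(2^{k(N)+1})}\bigr)\operatorname{li}(N+b)$. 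One checks \eqref{condition_epsilon} holds with some absolute $L$ via Mertens, exactly as in the computation of $V(z)$ in \eqref{function_V(z)}; this is what makes the sieve one-dimensional. The remainder is $r(d) = r_1(2^{k(N)}d) - r_2(2^{k(N)+1}d)$ where $r_i$ are the error terms against $\operatorname{li}$ in the two arithmetic progressions.

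Next I would handle the error sum $\sum_{d < D,\, d \mid P(z)} |r(d)|$ using Theorem~\ref{guo} (Elliott's estimate with the $\lambda < 5/2$ exponent). Since $2^{k(N)} \leq N^a$ with $a < 2/5$, and the power-of-two modulus $q = 2^{k(N)}$ satisfies $q \leq N^{2/5}\exp(-(\log\log N)^3)$ for large $N$, Theorem~\ref{guo} gives cancellation in $\sum_{d \leq (N+b)^{1/2} q^{-1}(\log)^{-B}, (d,q)=1} \max |r(d)| \ll \frac{N}{\varphi(q)(\log N)^{A}}$ for any $A$. So I may take the level of distribution to be essentially $D = N^{1/2} 2^{-k(N)} (\log N)^{-B} = N'^{1/2} N^{1/2} 2^{-2k(N)} (\log N)^{-B}$, i.e. $\log D = (\tfrac12 - \tfrac{a(1-a)}{1-a}\cdot\text{correction})\log N' + o(\log N')$ — more carefully, writing $2^{k(N)} = N'^{a/(1-a)}$ (since $N = N' \cdot 2^{k(N)}$ forces $\log 2^{k(N)} = \frac{a}{1-a}\log N'$ when $2^{k(N)} = N^a$), one gets $\log D \sim \bigl(\tfrac12 - \tfrac{3a/2}{1-a}\bigr)\log N' \cdot \frac{1-a}{1} $; I would just track this carefully so that
\[
s := \frac{\log D}{\log z} = \frac{1}{2\delta}\cdot\frac{1 - 2a}{1 - a}\cdot(1 + o(1)).
\]
The constraint $\delta \geq \tfrac18 - \tfrac{a}{8(1-a)} = \tfrac{1-2a}{8(1-a)}$ is precisely what guarantees $s \geq 2$ (with room to spare: it gives $s \geq 4$ up to $o(1)$), so the lower-bound sieve applies, and the upper constraint $\delta < \tfrac{1-2a}{4(1-a)}$ keeps $s > 2$ strictly.

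Then I would plug into Theorem~\ref{Jurkat}: $S(A,\mathcal P,z) \geq XV(z)(f(s) + O((\log D)^{-1/6})) + O(\sum|r(d)|)$. Since $s = \frac{1-2a}{2\delta(1-a)}(1+o(1)) \in (2,4]$ by the hypotheses on $\delta,a$, Lemma~\ref{initial_value} gives $f(s) = \frac{2e^\gamma \log(s-1)}{s}$. Substituting $X \sim \frac{N'}{2\log N}$ (using $\varphi(2^{k}) = 2^{k-1}$ and $\operatorname{li}(N+b) \sim \frac{N}{\log N}$, so $X \sim \frac{N}{2^{k(N)}\log N} = \frac{N'}{\log N}$ — I need to be careful about the factor $2$ here: $\frac{1}{\varphi(2^k)} - \frac{1}{\varphi(2^{k+1})} = \frac{1}{2^{k-1}} - \frac{1}{2^k} = \frac{1}{2^k}$, so actually $X \sim \frac{\operatorname{li}(N)}{2^{k(N)}} \sim \frac{N'}{\log N}$, no stray factor) yields
\[
S(A,\mathcal P,z) \geq \Bigl(\tfrac{2e^\gamma \log(s-1)}{s} + O((\log N)^{-1/6})\Bigr)\frac{N' V(z)}{\log N},
\]
and $\frac{2\log(s-1)}{s} = \frac{2}{s}\log(s-1) = \frac{4\delta(1-a)}{1-2a}\log\bigl(\frac{1-2a}{2\delta(1-a)} - 1\bigr) = C_1$, giving exactly the claimed bound. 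The main obstacle is the bookkeeping in step two: one must verify that the level of distribution extracted from Theorem~\ref{guo} — after accounting for the factor $2^{k(N)}$ eating into the modulus, the coprimality condition $(d,q)=1$ (harmless since $d \mid P(z)$ is odd), and the two progressions mod $2^{k(N)}d$ and $2^{k(N)+1}d$ — really is $D = (N')^{(1-2a)/(2(1-a)) - o(1)}$ and hence $s \to \frac{1-2a}{2\delta(1-a)}$, since the entire value of $C_1$ hinges on this exponent; everything else (Mertens for $V(z)$ and \eqref{condition_epsilon}, evaluation of $f(s)$, asymptotics of $X$) is routine.
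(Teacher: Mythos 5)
Your proposal is correct and follows essentially the same route as the paper: apply the Jurkat--Richert lower bound to $A$ with level of distribution $D \asymp N^{1/2}2^{-k(N)}(\log N)^{-B}$ supplied by Theorem~\ref{guo}, compute $s = \frac{\log D}{\log z} \to \frac{1-2a}{2\delta(1-a)} \in (2,4]$, and evaluate $f(s)$ via Lemma~\ref{initial_value} to obtain $e^\gamma C_1$, with the remainder sum absorbed by Elliott's estimate. The only blemishes are bookkeeping slips you yourself flag as needing care --- the intermediate rewriting of $D$ (it should be $N'^{1/2}2^{-k(N)/2}(\log N)^{-B}$, not $N'^{1/2}N^{1/2}2^{-2k(N)}(\log N)^{-B}$) and the attribution of the two constraints on $\delta$ is reversed (the lower bound $\delta \geq \tfrac{1-2a}{8(1-a)}$ forces $s \leq 4$, which is what validates the explicit formula for $f$, while the upper bound $\delta < \tfrac{1-2a}{4(1-a)}$ forces $s > 2$, which is what makes the lower-bound sieve applicable) --- but you land on the correct value of $s$ and the correct range $(2,4]$, so the argument goes through exactly as in the paper.
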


\begin{proof}
Let \( C > 0 \) be a constant to be determined later. By Theorem~\ref{guo}, we have
\begin{align*}
	|A| &= \pi(N + b, 2^{k(N)}, b) - \pi(N + b, 2^{k(N)+1}, b) \nonumber \\
	&= \frac{\operatorname{li}(N + b)}{2^{k(N)}} + O\left(\frac{N}{\varphi(2^{k(N)}) (\log N)^C}\right).  
\end{align*}
Define
\begin{equation}\label{definition_delta}
	\delta(N, q, a) := \pi(N, q, a) - \frac{\operatorname{li}(N)}{\varphi(q)}.
\end{equation}
For any \( d \mid P(z) \), we have \( (d, 2b) = 1 \), and
\[
|A_d| = \pi(N + b, 2^{k(N)} d, b) - \pi(N + b, 2^{k(N)+1} d, b),
\]
so the remainder term is given by
\[
r(d) := |A_d| - \frac{|A|}{\varphi(d)}  
= \delta(N + b, 2^{k(N)} d, b) - \delta(N + b, 2^{k(N)+1} d, b) + O\left(\frac{N}{\varphi(2^{k(N)} d)(\log N)^C}\right).
\]
Let \( B(C) \) denote the constant from Theorem~\ref{guo}, and define the level of distribution
\[
D := \frac{N^{1/2}}{2^{k(N) + 1} (\log N)^{B(C)}} 
= \frac{N'^{1/2}}{2^{k(N)/2 + 1} (\log N)^{B(C)}},
\]
where \( N' = \frac{N}{2^{k(N)}} \). We now apply the linear sieve (Theorem~\ref{Jurkat}) with \( z = N'^{\delta} \) to the set \( A \). We compute
\[
s = \frac{\log D}{\log z} 
= \frac{1}{2\delta} 
- \frac{\log 2^{k(N)/2}}{\delta \log N'} 
- \frac{\log\left(2 (\log N)^{B(C)}\right)}{\delta \log N'}.
\]
By \eqref{condition_2^k}, there exists a constant \( 0 < c' < 1 \) such that
\[
c' N'^{\frac{a}{2(1 - a)}} \leq 2^{k(N)/2} \leq N'^{\frac{a}{2(1 - a)}}.
\]
This yields the following bounds for \( s \):
\[\frac{1}{2\delta} - \frac{a}{2\delta(1-a)} - \frac{\log \left(2 (\log N)^{B(C) }\right)}{\log N'^\delta} 
\leq s \leq 
\frac{1}{2\delta} - \frac{a}{2\delta(1-a)} - \frac{\log c' + \log \left(2 (\log N)^{B(C) }\right)}{\log N'^\delta}.\]
The condition \( 2 \leq s \leq 4 \) is ensured by the inequality
\[
2 < \frac{1}{2\delta} - \frac{a}{2\delta(1-a)} \leq 4,
\]
which is equivalent to the assumption
\[
\frac{1}{8} - \frac{a}{8(1-a)} \leq \delta < \frac{1}{4} - \frac{a}{4(1-a)}.
\]

Under these conditions, we may apply the explicit form of the lower bound sieve function \( f(s) \) for \( 2 \leq s \leq 4 \), given in Lemma~\ref{initial_value}:
\[
f(s) = \frac{2e^\gamma \log(s - 1)}{s} 
= e^\gamma \cdot \frac{4\delta(1 - a)}{1 - 2a} \log\left(\frac{1 - 2a}{2\delta(1 - a)} - 1\right) + O\left(\frac{\log \log N}{\log N}\right)
= e^\gamma C_1 + O\left(\frac{\log \log N}{\log N}\right).
\]

Since \( 2^{k(N)+1} D \leq \frac{N^{1/2}}{(\log N)^{B(C)}} \), by Theorem~\ref{guo} and the bound \( \sum_{d < D} \frac{1}{\varphi(d)} \ll \log N \), the error term satisfies
\[
  \sum_{\substack{d < D \\ d \mid P(z)}} |r(d)| 
\ll \frac{N}{\varphi(2^{k(N)}) (\log N)^C} 
+ \frac{N}{\varphi(2^{k(N)}) (\log N)^C} \cdot \log N
\ll \frac{N}{\varphi(2^{k(N)}) (\log N)^{C - 1}}.
\]
Then,
\begin{align*}
	S(A, \mathcal{P}, z) 
	&\geq  \left(f(s) + O\left((\log D)^{-1/6}\right)\right) |A| V(z) + O\left(  \sum_{\substack{d < D \\ d \mid P(z)}} |r(d)| \right) \\
	&= \left(e^\gamma C_1 + O\left((\log N)^{-1/6}\right)\right) \frac{N' V(z)}{\log N}  
	+ O\left(\frac{N}{\varphi(2^{k(N)}) (\log N)^{C - 1}}\right).
\end{align*}

Taking \( C = 4 \), we obtain, for sufficiently large \( N \),
\[
S(A, \mathcal{P}, z) 
\geq \left(e^\gamma C_1  + O\left((\log N)^{-1/6}\right)\right) \frac{N' V(z)}{\log N}.
\]

\end{proof}

Next we give an estimate for the second sum in \eqref{sieving_functions_final_n_2}. 
	
	\begin{theorem}\label{upper_bound2}
Let \( a, \delta \) be constants satisfying
\[
\frac{1}{8} - \frac{a}{8(1 - a)} \leq \delta < \frac{1}{6} - \frac{a}{2(1 - a)}, \quad \text{and} \quad 0 < a < \frac{1}{10}.
\]
Let \( N' = \frac{N}{2^{k(N)}} \), \(y = N'^{1/3}\) and \( z = N'^{\delta}.\)
Then for   sufficiently large \( N \), we have
\[
\sum_{\substack{z \leq q < y}} S(A_q, \mathcal{P}, z) 
\leq \left(4 e^\gamma C_{2 } + O\left((\log N)^{-1/6}\right) \right) \frac{ N' V(z)}{  \log N},
\]
		where 
	\[	C_{2 }:= \frac{ \delta(1-a)}{1-2a} \left( \log \frac{1}{3\delta} - \log \left( \frac{1 - \frac{2}{3}\cdot \frac{ 1-a}{1-2a}  }{1- 2\delta \cdot\frac{1-a}{1-2a}}\right) \right).\]
	\end{theorem}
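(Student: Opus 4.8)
The plan is to apply the upper-bound linear sieve of Theorem~\ref{Jurkat} to each set $A_q$ separately, for $q$ a prime in $[z,y)$, then to sum the resulting estimates over $q$ and recognize the sum as a constant multiple of an explicit integral in the variable $\beta = \log q/\log N'$, which a partial-fractions computation identifies with $2C_2/\delta$.

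First I would pin down the main term and an admissible level of distribution for each $A_q$. For a prime $q$ with $z\le q<y$ and any $d\mid P(z)$ we have $(d,2bq)=1$ and $|(A_q)_d|=|A_{qd}|=\pi(N+b,2^{k(N)}qd,b)-\pi(N+b,2^{k(N)+1}qd,b)$; writing the modulus as $2^{k(N)}$ times the odd integer $qd$ and invoking Theorem~\ref{guo} (with $2^{k(N)}$ in the role of the power-of-$2$ modulus and $qd$ in the role of the free variable) gives
\[
|A_q|=\frac{\operatorname{li}(N+b)}{2^{k(N)}(q-1)}+O\!\left(\frac{N}{\varphi(2^{k(N)})\varphi(q)(\log N)^{C}}\right),
\]
so the sifting density is again $g(d)=1/\varphi(d)$ on $d\mid P(z)$ and $V(z)$ is unchanged. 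Setting $D_q:=N^{1/2}\big(2^{k(N)+1}q(\log N)^{B(C)}\big)^{-1}$ guarantees $2^{k(N)}qd\le N^{1/2}(\log N)^{-B(C)}$ for every $d<D_q$; summing the remainders $r_q(d)$ first over $d<D_q$ and then over $q$, and noting that each integer $m=qd$ has at most $O(1/\delta)$ factorizations of the required shape, one more application of Theorem~\ref{guo} bounds the total error by $O\!\big(N\varphi(2^{k(N)})^{-1}(\log N)^{1-C}\big)$, which is negligible once $C$ is taken large.

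Next I would compute the sieve parameter. Writing $q=N'^{\beta}$ with $\delta\le\beta<1/3$ and using $\log N=(1-a)^{-1}\log N'\,(1+o(1))$, one finds
\[
s_q=\frac{\log D_q}{\log z}=\frac{1}{\delta}\left(\frac{1-2a}{2(1-a)}-\beta\right)+O\!\left(\frac{\log\log N}{\log N}\right).
\]
The hypotheses $\tfrac18-\tfrac{a}{8(1-a)}\le\delta<\tfrac16-\tfrac{a}{2(1-a)}$ and $0<a<\tfrac1{10}$ are exactly what is needed so that this interval for $\delta$ is nonempty and $1\le s_q\le 3$ holds (up to $o(1)$) uniformly for $\beta\in[\delta,1/3)$: the lower bound on $\delta$ controls the extreme $\beta=\delta$ where $s_q$ is largest, the upper bound controls $\beta\to1/3$ where $s_q$ is smallest. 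Hence Lemma~\ref{initial_value} applies with $F(s_q)=2e^{\gamma}/s_q+o(1)$, and since $\log D_q\asymp\log N$ throughout, Theorem~\ref{Jurkat} gives, uniformly in $q$,
\[
S(A_q,\mathcal{P},z)\le\left(\frac{2e^{\gamma}\delta}{\frac{1-2a}{2(1-a)}-\beta}+O\!\big((\log N)^{-1/6}\big)\right)\frac{N'V(z)}{(q-1)\log N}+O\!\Big(\textstyle\sum_{d<D_q}|r_q(d)|\Big).
\]

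Finally I would sum over the primes $q\in[z,y)$. By Mertens' theorem and partial summation,
\[
\sum_{z\le q<y}\frac{1}{q-1}\cdot\frac{1}{\frac{1-2a}{2(1-a)}-\frac{\log q}{\log N'}}=\left(\int_{\delta}^{1/3}\frac{d\beta}{\beta\big(\frac{1-2a}{2(1-a)}-\beta\big)}\right)\!(1+o(1)),
\]
and a partial-fractions evaluation of the integral (legitimate because $\frac{1-2a}{2(1-a)}>\tfrac13$ when $a<\tfrac14$) yields exactly $2C_2/\delta$ with $C_2$ as in the statement. Combining with the aggregated remainder bound from the second paragraph gives
\[
\sum_{z\le q<y}S(A_q,\mathcal{P},z)\le\Big(4e^{\gamma}C_2+O\!\big((\log N)^{-1/6}\big)\Big)\frac{N'V(z)}{\log N}.
\]
The step I expect to be the main obstacle is the uniform bookkeeping of the sieve remainders: one must arrange the double sum $\sum_{q}\sum_{d<D_q}|r_q(d)|$ so that the moduli $2^{k(N)}qd$ land in the range covered by Theorem~\ref{guo}, absorb the bounded multiplicity coming from the factorization $m=qd$, and check that the $O((\log D_q)^{-1/6})$ term from the linear sieve is genuinely uniform over $q<N'^{1/3}$; the remaining ingredients — a Mertens-type estimate and a partial-fractions integral — are routine.
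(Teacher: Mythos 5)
Your proposal is correct and follows essentially the same route as the paper's proof: the linear sieve is applied to each $A_q$ with level $D_q \asymp N^{1/2}\bigl(2^{k(N)}q(\log N)^{B}\bigr)^{-1}$, the hypotheses on $a$ and $\delta$ are used exactly as you describe to place $s_q$ in $[1,3]$ so that $F(s_q)=2e^{\gamma}/s_q$ applies, the remainders are aggregated via Theorem~\ref{guo}, and the sum over $q$ is converted by Mertens/partial summation into the same partial-fractions integral equal to $2C_2/\delta$ (the paper merely uses the variable $\alpha=\log t/\log(N'/2^{k(N)})$ in place of your $\beta=\log q/\log N'$, a trivial reparametrization). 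Your multiplicity bound $O(1/\delta)$ for the factorizations $m=qd$ is even more than needed, since $q$ is the unique prime factor of $m$ exceeding $z$, so the factorization is unique.
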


	\begin{proof}
Let \( A_q \) be the subset of \( A \) consisting of elements divisible by \( q \). We apply the linear sieve (Theorem~\ref{Jurkat}) to the set \( A_q \) to obtain an upper bound for 
\( S(A_q, \mathcal{P}, z) \), where \( q \) is a prime satisfying \( z \leq q < y \).

Let \( d \mid P(z) \). Since \( d \) is composed only of primes less than \( z \), and \( q \geq z \) is prime, it follows that \( (q, d) = 1 \). Therefore, the remainder term \( r_q(d) \) is given by
\begin{align*}
	r_q(d) &= |A_{q d}| - \frac{|A_q|}{\varphi(d)} \\
	&= \delta(N + b, 2^{k(N)} q d, b) - \delta(N + b, 2^{k(N) + 1} q d, b) \\
	&\quad - \frac{\delta(N + b, 2^{k(N)} q, b)}{\varphi(d)} 
	+ \frac{\delta(N + b, 2^{k(N) + 1} q, b)}{\varphi(d)},
\end{align*}
where \( \delta(N, q, a) \) is defined in \eqref{definition_delta}.
Let \( C > 0 \) be a constant to be chosen later, and define the level of distribution
\[
D_q := \frac{N^{1/2}}{2^{k(N) + 1} (\log N)^{B(C+1)} q} 
= \frac{N'^{1/2}}{2^{k(N)/2 + 1} (\log N)^{B(C+1)} q},
\]
where \( B(C+1) \) is the constant appearing in Theorem~\ref{guo}.

The error term for \( S(A_q, \mathcal{P}, z) \) is given by
\[
R_q : = \sum_{\substack{d < D_q \\ d \mid P(z)}} |r_q(d)|.
\]
Summing over all primes \( z \leq q < y \), and letting \( s_q = \frac{\log D_q}{\log z} \), we obtain
\begin{equation}\label{eq_1}
	\sum_{\substack{z \leq q < y}} S(A_q, \mathcal{P}, z)
	\leq \sum_{\substack{z \leq q < y}} \left(F(s_q) + O((\log D_q)^{-1/6})\right) |A_q| V(z) 
	+ O\left(\sum_{z \leq q < y} R_q\right).
\end{equation}

Using Theorem~\ref{guo} and the estimate \( \sum_{d \leq x} \frac{1}{\varphi(d)} \ll \log x \), we obtain
\begin{align*} 
	\sum_{z \leq q < y} R_q 
	&\ll \sum_{z \leq q < y} \sum_{\substack{d < D_q \\ d \mid P(z)}} 
	\left( \left| \delta(N + b, 2^{k(N)} q d, b) \right| + \left| \delta(N + b, 2^{k(N) + 1} q d, b) \right| \right) \nonumber \\
	&\quad + \sum_{z \leq q < y} \sum_{\substack{d < D_q \\ d \mid P(z)}} 
	\left( \frac{\left| \delta(N + b, 2^{k(N)} q, b) \right|}{\varphi(d)} 
	+ \frac{\left| \delta(N + b, 2^{k(N)+1} q, b) \right|}{\varphi(d)} \right) \nonumber \\
	&\ll \frac{N}{2^{k(N)} (\log N)^{C+1}} 
	+ \frac{N}{2^{k(N)} (\log N)^{C+1}} \sum_{d < N^{1/2}} \frac{1}{\varphi(d)} \nonumber \\
	&\ll \frac{N}{2^{k(N)} (\log N)^C}.
\end{align*}
We compute \( s_q \) as follows 
\begin{align*}
	s_q &= \frac{\log D_q}{\log z} 
	= \frac{\log \left( \frac{N'^{1/2}}{2^{k(N)/2} q} \right)}{\log N'^\delta} 
	- \frac{\log \left( 2 (\log N)^{B(C+1)} \right)}{\log N'^\delta}.
\end{align*}
As noted previously, for some constant \( 0 < c' < 1 \), we have
\[
c' N'^{\frac{a}{2(1 - a)}} \leq 2^{k(N)/2} \leq N'^{\frac{a}{2(1 - a)}}.
\]
Moreover, since \( z \leq q < y \), we obtain the following bounds 
\[
\frac{1}{6\delta} - \frac{a}{2\delta(1 - a)} - \frac{\log \left( 2 (\log N)^{B(C+1)} \right)}{\log N'^\delta}
\leq s_q \leq 
\frac{1}{2\delta} - 1 - \frac{a}{2\delta(1 - a)} - \frac{\log c' + \log \left( 2 (\log N)^{B(C+1)} \right)}{\log N'^\delta}.
\]
The condition \( 1 \leq s_q \leq 3 \) is guaranteed by the following inequalities 
\[
\frac{1}{6\delta} - \frac{a}{2\delta(1 - a)} > 1,
\qquad
\frac{1}{2\delta} - 1 - \frac{a}{2\delta(1 - a)} \leq 3,
\]
which hold under the assumptions
\[
\frac{1}{8} - \frac{a}{8(1 - a)} \leq \delta < \frac{1}{6} - \frac{a}{2(1 - a)}, 
\qquad \text{and} \qquad 0 < a < \frac{1}{10}.
\]

Therefore, we may apply the explicit form of the upper bound linear sieve function \( F(s) \) for \( 1 \leq s \leq 3 \), as given in Lemma~\ref{initial_value}:
\[
F(s_q) = \frac{2e^\gamma}{s_q} 
= \frac{2e^\gamma \log N'^\delta}{\log\left( \frac{N'^{1/2}}{2^{k(N)/2} q} \right)} 
+ O\left( \frac{\log \log N}{\log N} \right).
\]
Moreover, for each prime \( q \), we have
\begin{align*}
	|A_q| 
	&= \pi(N + b, 2^{k(N)} q, b) - \pi(N + b, 2^{k(N)+1} q, b) \\
	&= \frac{\operatorname{li}(N + b)}{2^{k(N)} \varphi(q)} 
	- \delta(N + b, 2^{k(N)} q, b) + \delta(N + b, 2^{k(N)+1} q, b).
\end{align*}
Substituting this into \eqref{eq_1}, we obtain 
\begin{align}\label{eq_3}
	&\sum_{z \leq q < y} \left(F(s_q) +  O\left((\log D_q)^{-1/6}\right)  \right) |A_q|\nonumber\\ 
	=& \sum_{z \leq q < y} \left( 
	\frac{2e^\gamma \log N'^\delta}{\log\left( \frac{N'^{1/2}}{2^{k(N)/2} q} \right)} 
	+ O\left((\log N)^{-1/6}\right)  
	\right) \frac{\operatorname{li}(N + b)}{2^{k(N)} \varphi(q)} \nonumber \\
	&\quad + O\left( \sum_{z \leq q < y} 
	\left\{ \left| \delta(N + b, 2^{k(N)} q, b) \right|  
	+ \left| \delta(N + b, 2^{k(N)+1} q, b) \right| \right\}
	\right) \nonumber \\
	=& \sum_{z \leq q < y} \left( 
	\frac{2e^\gamma \log N'^\delta}{\log\left( \frac{N'^{1/2}}{2^{k(N)/2} q} \right)} 
	+ O\left((\log N)^{-1/6}\right)  
	\right) \frac{\operatorname{li}(N + b)}{2^{k(N)}} 
	\left( \frac{1}{q} + O\left( \frac{1}{q^2} \right) \right) \nonumber \\
	&\quad + O\left( \sum_{z \leq q < y}\left\{
	\left| \delta(N + b, 2^{k(N)} q, b) \right| 
	+ \left| \delta(N + b, 2^{k(N)+1} q, b) \right| \right\}
	\right).
\end{align}	
The error terms in \eqref{eq_3} can be estimated using the following bounds
 
\[
\sum_{z \leq q < y} \frac{1}{q^2 \log\left( \frac{N'^{1/2}}{2^{k(N)/2} q} \right)} 
\ll \frac{1}{\log N'^{\frac{1}{6} - \frac{a}{2(1-a)}}} \sum_{z \leq q < y} \frac{1}{q^2} 
\ll \frac{1}{N'^\delta \log N'},
\]
\[
\sum_{z \leq q < y} \frac{1}{q} = O(1),
\]
and by Theorem~\ref{guo},
\[
\sum_{z \leq q < y} \delta(N + b, 2^{k(N)} q, b), \ 
\sum_{z \leq q < y} \delta(N + b, 2^{k(N)+1} q, b) 
\ll \frac{N}{2^{k(N)} (\log N)^{C+1}}.
\]

Now we evaluate the main term. 
Let
\[
S(t) := \sum_{q < t} \frac{1}{q} = \log \log t + d + O\left( \frac{1}{\log t} \right),
\]
for some absolute constant \( d \).
Using integration by parts, we obtain
\[
\sum_{z \leq q < y} \frac{1}{q \log\left( \frac{N'^{1/2}}{2^{k(N)/2} q} \right)} 
= \int_z^y \frac{dS(t)}{\log\left( \frac{N'^{1/2}}{2^{k(N)/2} t} \right)} 
= \int_z^y \frac{dt}{t \log t \log\left( \frac{N'^{1/2}}{2^{k(N)/2} t} \right)} 
+ O\left( \frac{1}{(\log N)^2} \right).
\]
To evaluate the integral, we make the substitution 
\[
t = \left( \frac{N'}{2^{k(N)}} \right)^\alpha.
\]
Let
\[
z' = \frac{\delta \log N'}{\log \left( \frac{N'}{2^{k(N)}} \right)}, \quad 
y' = \frac{\frac{1}{3} \log N'}{\log \left( \frac{N'}{2^{k(N)}} \right)}.
\]
Then,
\begin{align}\label{eq_4}
	\int_z^y \frac{dt}{t \log t \log\left( \frac{N'^{1/2}}{2^{k(N)/2} t} \right)} 
	&= \frac{1}{\log \left( \frac{N'}{2^{k(N)}} \right)} \int_{z'}^{y'} \frac{1}{\alpha (1/2 - \alpha)} \, d\alpha \nonumber \\
	&= \frac{2}{\log \left( \frac{N'}{2^{k(N)}} \right)} \left[ \log \alpha - \log \left( \frac{1}{2} - \alpha \right) \right]_{z'}^{y'} \nonumber \\
	&= \frac{2}{\log \left( \frac{N'}{2^{k(N)}} \right)} \left( \log \frac{1}{3\delta} 
	- \log \frac{ \frac{1}{2} - \frac{\frac{1}{3} \log N'}{\log \left( \frac{N'}{2^{k(N)}} \right)} }
	{ \frac{1}{2} - \frac{\delta \log N'}{\log \left( \frac{N'}{2^{k(N)}} \right)} } \right) \nonumber \\
	&=  {2 C_2 }   + O\left( \frac{1}{\log N } \right).
\end{align}
Taking \( C = 3 \), and combining equations~\eqref{eq_1}, \eqref{eq_3}, and \eqref{eq_4}, we obtain
\[
\sum_{\substack{z \leq q < y}} S(A_q, \mathcal{P}, z) 
\leq\left( 4e^\gamma C_{2 } + O\left((\log N)^{-1/6}\right)  \right) 
\frac{N 'V(z)}{  \log N}.
\]

	\end{proof}
In the proof of the next result, Theorem~\ref{upper_bound_3}, we will use the following bilinear form inequality.

\begin{lemma}[{\cite[Theorem 10.7]{nathanson}}]\label{bilinear_form_inequality}
	Let \( a(n) \) be an arithmetic function such that \( |a(n)| \leq 1 \) for all \( n \). Let \( A > 0 \), and suppose \( X > (\log Y)^{2A} \). Define
	\[
	D^* = \frac{(XY)^{1/2}}{(\log Y)^A}.
	\]
	Then
	\[
	\sum_{d < D^*} \max_{(a, d) = 1} \left| 
	\sum_{n < X} \sum_{\substack{Z \leq p < Y \\ np \equiv a \pmod d}} a(n) 
	- \frac{1}{\varphi(d)} \sum_{n < X} \sum_{\substack{Z \leq p < Y \\ (np, d) = 1}} a(n) 
	\right| 
	\ll \frac{XY (\log XY)^2}{(\log Y)^A},
	\]
	where the implied constant depends only on \( A \).
\end{lemma}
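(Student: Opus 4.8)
The plan is to prove this by the standard route for Bombieri--Vinogradov-type estimates with a built-in bilinear structure: orthogonality of Dirichlet characters, the Siegel--Walfisz theorem for small moduli, and the large sieve inequality in bilinear form for the remaining ones. First I would expand the inner difference using characters modulo $d$: for $(a,d)=1$,
\[
\sum_{n<X}\ \sum_{\substack{Z\le p<Y\\ np\equiv a\pmod d}}a(n)\ -\ \frac{1}{\varphi(d)}\sum_{n<X}\ \sum_{\substack{Z\le p<Y\\ (np,d)=1}}a(n)\ =\ \frac{1}{\varphi(d)}\sum_{\substack{\chi\bmod d\\ \chi\neq\chi_0}}\overline{\chi(a)}\,S_d(\chi),
\]
where $S_d(\chi):=\bigl(\sum_{n<X}a(n)\chi(n)\bigr)\bigl(\sum_{Z\le p<Y}\chi(p)\bigr)$, so that the quantity to be bounded is at most $\sum_{d<D^*}\varphi(d)^{-1}\sum_{\chi\neq\chi_0}|S_d(\chi)|$. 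I would then split the range of $d$ at the threshold $R_0:=(\log Y)^{B}$, with $B=B(A)$ to be chosen large.

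For the small moduli $d\le R_0$, instead of characters I would return to the original count: for each fixed $n<X$ with $(n,d)=1$ the inner sum is $\pi(Y;d,an^{-1})-\pi(Z;d,an^{-1})$, and the Siegel--Walfisz theorem gives this as $\varphi(d)^{-1}(\operatorname{li}Y-\operatorname{li}Z)+O\!\bigl(Y\exp(-c\sqrt{\log Y})\bigr)$ uniformly for $d\le R_0$. Since $|a(n)|\le 1$ and there are at most $X$ such $n$, the total contribution of $d\le R_0$ is $\ll XYR_0\exp(-c\sqrt{\log Y})\ll XY(\log Y)^{-A}$, which is within the claimed bound.

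For $R_0<d<D^*$, I would induce each non-principal $\chi\bmod d$ by its primitive inducing character $\chi^\ast$ modulo some $r\mid d$, $r>1$; remove the coprimality condition $(np,d/r)=1$ at the cost of a bounded divisor sum (essentially harmless, since $p$ being prime the extra divisibility only constrains $n$); interchange the order of summation; and use $\sum_{d<D^*,\ r\mid d}\varphi(d)^{-1}\ll\varphi(r)^{-1}\log D^*$ to reduce matters to bounding $\log D^*\sum_{R_0<r<D^*}\varphi(r)^{-1}\sideset{}{^\ast}\sum_{\chi\bmod r}|S^\ast_r(\chi)|$, where $S^\ast_r(\chi)$ is the analogue of $S_d(\chi)$ with $\chi$ primitive mod $r$. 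Writing $\varphi(r)^{-1}=r^{-1}\cdot r\varphi(r)^{-1}$ and summing dyadically over $r\asymp R$, I would invoke the bilinear large sieve inequality
\[
\sum_{r\le R}\frac{r}{\varphi(r)}\sideset{}{^\ast}\sum_{\chi\bmod r}\ \Bigl|\,\sum_{m<X}\sum_{n<Y}\alpha_m\beta_n\,\chi(mn)\,\Bigr|\ \ll\ (R^2+X)^{1/2}(R^2+Y)^{1/2}\,\|\alpha\|_2\,\|\beta\|_2,
\]
with $\alpha_m=a(m)$ for $m<X$ and $\beta_n$ the indicator function of the primes in $[Z,Y)$, so that $\|\alpha\|_2\le X^{1/2}$ and $\|\beta\|_2\le Y^{1/2}$. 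The decisive point is that for $R\le D^*=(XY)^{1/2}(\log Y)^{-A}$ one has $D^{*2}\ge X$ (since $Y\ge(\log Y)^{2A}$ for large $Y$) and $D^{*2}\ge Y$ (by the hypothesis $X>(\log Y)^{2A}$), so that $(R^2+X)^{1/2}(R^2+Y)^{1/2}\le R^2+RX^{1/2}+RY^{1/2}+X^{1/2}Y^{1/2}$ never exceeds what is needed; carrying out the dyadic summation over $R\in[R_0,D^*]$ then yields a main term $\ll D^*X^{1/2}Y^{1/2}=XY(\log Y)^{-A}$, a tail term $\ll XY/R_0=XY(\log Y)^{-B}$ from the small-$R$ blocks, and cross terms $\ll(XY^{1/2}+X^{1/2}Y)\log(XY)$, all of which are $\ll XY(\log XY)^2(\log Y)^{-A}$ once $B\ge A$ and one uses $X^{1/2}>(\log Y)^A$.

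The main obstacle is the treatment of the large moduli in the last paragraph. Two points need care: first, the passage to primitive characters together with the removal of the condition $(np,d/r)=1$ must be arranged so that the attendant divisor sums cost only powers of $\log XY$ --- this is exactly why the statement carries the extra factor $(\log XY)^2$; second, one has to track through the dyadic summation that no surviving term beats the target $XY(\log Y)^{-A}$ (up to the $\log$ powers), which is precisely where the normalization $D^*=(XY)^{1/2}(\log Y)^{-A}$ and the hypothesis $X>(\log Y)^{2A}$ (ensuring $D^{*2}\gg Y$, so that the level of distribution can absorb the length $Y$ of the prime variable) are used in tandem. Steps 1 and 2 are, by contrast, entirely routine.
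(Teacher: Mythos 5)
First, a point of reference: the paper does not prove this lemma at all --- it is quoted from Nathanson \cite[Theorem~10.7]{nathanson} and used as a black box --- so there is no internal proof to compare against. Your route (orthogonality of characters, Siegel--Walfisz for moduli $d\le(\log Y)^{B}$, reduction to primitive characters plus the bilinear large sieve for $(\log Y)^{B}<d<D^{*}$, and the dyadic bookkeeping showing that the normalization $D^{*}=(XY)^{1/2}(\log Y)^{-A}$ together with the hypothesis $X>(\log Y)^{2A}$ absorbs the cross terms $XY^{1/2}$ and $X^{1/2}Y$) is the standard one and is essentially how the cited result is proved. The orthogonality identity, the small-modulus estimate, the quoted form of the bilinear large sieve, and the dyadic summation are all correct as written.

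The genuine gap is precisely the step you wave through: the passage to primitive characters. For $\chi\bmod d$ induced by $\chi^{*}\bmod r$ one has $\sum_{n<X}a(n)\chi(n)=\sum_{n<X,\,(n,d)=1}a(n)\chi^{*}(n)$, so the coefficient sequence fed into the large sieve depends on $d$ and not only on the conductor $r$; the large sieve over primitive characters mod $r\asymp R$ cannot be invoked until that dependence is removed. Your proposal to ``remove the coprimality condition $(np,d/r)=1$ at the cost of a bounded divisor sum (essentially harmless)'' fails if taken at face value: dropping the condition $(n,d)=1$ changes the $n$-sum by up to $\#\{n<X:(n,d)>1\}\gg X/p_{0}$ for the least prime $p_{0}\mid d$ with $p_0\nmid r$, and after multiplying by the trivial bound $Y$ for the prime sum and summing over $d<D^{*}$ this contributes on the order of $XYD^{*}$, vastly exceeding the target; likewise the correction $\sum_{p\mid d,\,Z\le p<Y}\chi^{*}(p)$ costs up to $X\nu(d)$ per modulus, and $\sum_{d<D^{*}}X\nu(d)\asymp XD^{*}\log\log D^{*}=X^{3/2}Y^{1/2+o(1)}$, which beats $XY(\log Y)^{-A}$ only when $X\ll Y^{1+o(1)}$ --- not implied by the hypotheses. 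The correct execution keeps the bilinear structure: write $\mathbf{1}_{(n,d)=1}=\sum_{e\mid(n,d)}\mu(e)$ so that the $n$-sum becomes $\sum_{e}\mu(e)\chi^{*}(e)\sum_{m<X/e}a(em)\chi^{*}(m)$ with $(e,r)=1$, interchange the sum over $e$ with those over $d$ and $\chi^{*}$, use $\sum_{d<D^{*},\,re\mid d}\varphi(d)^{-1}\ll\varphi(r)^{-1}\varphi(e)^{-1}\log D^{*}$, apply the bilinear large sieve separately for each $e$ with the dilated coefficients $a(em)$ of $\ell^{2}$-norm at most $(X/e)^{1/2}$, and conclude by the convergence of $\sum_{e}\mu^{2}(e)e^{-1/2}\varphi(e)^{-1}$; the two logarithms so incurred are exactly the $(\log XY)^{2}$ in the statement. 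Without this (or an equivalent device) the large-modulus estimate --- the heart of the lemma --- is not established.
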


	\begin{theorem}\label{upper_bound_3}
		Let   \( y = N'^{1/3} \), and \( z = N'^{\delta} \) for some \( 0 < \delta < \frac{1}{3} \). Then for sufficiently large \( N' \), we have
		\[
		S(B, \mathcal{P}, y)  
	\leq  \left({4\delta e^\gamma C_\delta}  +(\log N')^{-1/6}\right) \frac{ N'V(z)}{\log N' }	,
		\]
		where
		\[
		C_\delta = \int_{\delta}^{1/3} \frac{\log(2 - 3\beta)}{\beta(1 - \beta)} \, d\beta.
		\]
		
	\end{theorem}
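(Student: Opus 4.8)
The plan is to bound $S(B,\mathcal{P},y)$ by slicing $B$ according to its two smaller prime factors and applying the upper-bound linear sieve (Theorem~\ref{Jurkat}) to each slice with a common level of distribution close to $N'^{1/2}$. Write $S(B,\mathcal{P},y)=\sum_{p_1,p_2}S\bigl(B(p_1,p_2),\mathcal{P},y\bigr)$, where, for primes $z\le p_1<y\le p_2$ with $p_1p_2^{2}\le N'$, we set $B(p_1,p_2)=\{\,2^{k(N)}p_1p_2\ell+b:\ \ell\text{ prime},\ p_2\le\ell\le N'/(p_1p_2)\,\}$; since $\ell$ is prime these slices partition $B$. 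For a prime $q\in\mathcal{P}$ with $q\ne p_1$, the congruence $2^{k(N)}p_1p_2\ell+b\equiv0\pmod q$ fixes $\ell$ to a single residue class coprime to $q$, while $q=p_1$ kills no $\ell$ (as $q\nmid b$); hence the relevant density is the multiplicative function $g$ with $g(q)=1/(q-1)$ for $q\ne p_1$ and $g(p_1)=0$, which satisfies \eqref{condition_epsilon} just as in \eqref{function_V(z)}, and $V_{p_1}(y):=\prod_{q<y,\,q\in\mathcal{P},\,q\ne p_1}(1-g(q))=V(y)\bigl(1+O(N'^{-\delta})\bigr)$.

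First I would apply Theorem~\ref{Jurkat} to each $B(p_1,p_2)$ with the level $D:=N'^{1/2}(\log N')^{-B}$ for a large fixed constant $B$. Since $y=N'^{1/3}$, the sifting parameter is $s=\log D/\log y=\tfrac32+O\!\bigl(\tfrac{\log\log N'}{\log N'}\bigr)\in(1,3)$, so Lemma~\ref{initial_value} gives $F(s)=2e^{\gamma}/s=\tfrac43e^{\gamma}+O\!\bigl(\tfrac{\log\log N'}{\log N'}\bigr)$. Bounding $|B(p_1,p_2)|\le\pi\!\bigl(N'/(p_1p_2)\bigr)$ in the main term, the Prime Number Theorem and partial summation give $\sum_{p_1,p_2}\pi\!\bigl(N'/(p_1p_2)\bigr)=N'\int_z^{y}\!\int_y^{(N'/u)^{1/2}}\frac{d(\log\log t)\,d(\log\log u)}{\log(N'/ut)}+o\!\bigl(N'/\log N'\bigr)$; with $u=N'^{\beta}$, $t=N'^{\alpha}$ the double integral becomes $\frac1{\log N'}\int_\delta^{1/3}\!\int_{1/3}^{(1-\beta)/2}\frac{d\alpha\,d\beta}{\alpha\beta(1-\alpha-\beta)}$, and the inner integral over $\alpha$ evaluates by partial fractions to $\frac{\log(2-3\beta)}{1-\beta}$, so the double integral equals $C_\delta/\log N'$. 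Together with the Mertens asymptotic $V(y)=(3\delta+o(1))V(z)$ this produces the main term $\bigl(\tfrac43e^{\gamma}\cdot3\delta\cdot C_\delta+O((\log N')^{-1/6})\bigr)\frac{N'V(z)}{\log N'}=\bigl(4\delta e^{\gamma}C_\delta+O((\log N')^{-1/6})\bigr)\frac{N'V(z)}{\log N'}$, which is the asserted bound.

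The remaining — and main — obstacle is the total remainder $\sum_{p_1,p_2}\sum_{d<D,\,d\mid P(y)}|r_d(p_1,p_2)|$, which I would handle with Lemma~\ref{bilinear_form_inequality}. Each $r_d(p_1,p_2)$ records the deviation of the count of primes $\ell\le N'/(p_1p_2)$ in the class $\ell\equiv-b\,(2^{k(N)}p_1p_2)^{-1}\pmod d$ from its expected value; writing $n=p_1p_2$ (so the coefficient $a(n)=\#\{(p_1,p_2):n=p_1p_2,\ z\le p_1<y\le p_2\}$ is $0$ or $1$, since $n$ determines its factorization into two primes) the double sum turns into a bilinear sum over $n$ and the prime $\ell$, coupled by $n\ell\le N'$. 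To keep the level of distribution at $N'^{1/2-o(1)}$ I would split the range $n\in[N'^{\delta+1/3},N'^{2/3}]$ into $O(\log N')$ dyadic blocks $n\in[M,2M)$; on each block $\ell$ runs over primes up to $N'/M\ge N'^{1/3}$, so Lemma~\ref{bilinear_form_inequality} applies with $X\asymp M$, $Y=N'/M$, giving $D^{*}=(XY)^{1/2}(\log Y)^{-A}\asymp N'^{1/2}(\log N')^{-A}\ge D$ once $B\ge A$, and a contribution $\ll N'(\log N')^{2}/(\log N')^{A}$ per block, hence $\ll N'/(\log N')^{A-3}$ in total — negligible for $A$, and then $B$, large. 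A short elementary estimate disposes of the minor discrepancy coming from pairs with $q\mid p_1$ for some $q\mid d$, which contributes only $\ll N'^{1-\delta}\log\log N'$. The crux is that the bilinear input gives error control exactly up to level $\approx N'^{1/2}$, which is precisely what makes $s=\log D/\log y\approx\tfrac32>1$, so that the sharp value $F(\tfrac32)=\tfrac43e^{\gamma}$ is available; any smaller level would only yield $F\to 2e^{\gamma}$ and a weaker constant.
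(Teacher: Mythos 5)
Your main-term computation agrees with the paper's (the same double integral giving $C_\delta/\log N'$, the same $F(3/2)=\tfrac43 e^\gamma$, and $V(y)=(3\delta+o(1))V(z)$), but the remainder estimate has a genuine gap, and it sits exactly at the point you call the crux. By slicing $B$ into the sets $B(p_1,p_2)$ and applying Theorem~\ref{Jurkat} to each slice separately, you commit yourself to bounding
\[
\sum_{p_1,p_2}\ \sum_{\substack{d<D\\ d\mid P(y)}}\bigl|r_d(p_1,p_2)\bigr|
\;=\;\sum_{\substack{d<D\\ d\mid P(y)}}\ \sum_{n}\bigl|r_d(n)\bigr|,
\]
with the absolute value \emph{inside} the sum over $n=p_1p_2$. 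Lemma~\ref{bilinear_form_inequality} controls only $\sum_{d}\bigl|\sum_n(\cdots)\bigr|$; it says nothing about $\sum_d\sum_n|\cdots|$, and the entire content of the bilinear form inequality is the cancellation in the $n$-sum. Without that cancellation, each slice $B(p_1,p_2)$ is just a count of primes $\ell\le N'/(p_1p_2)$ in a single residue class modulo $2^{k(N)}d$, and Theorem~\ref{guo} (or Bombieri--Vinogradov) for primes up to $x=N'/(p_1p_2)\le N'^{2/3-\delta}$ only permits moduli up to roughly $x^{1/2}\le N'^{1/3}$ --- far below $N'^{1/2}$. With that level, $s=\log D/\log y\le 1$ and the constant $F(3/2)=\tfrac43e^\gamma$ is unavailable, which is precisely the degradation you warn about in your last sentence.

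The repair is to group the slices so that the $n$-sum sits inside the sifted set (so the remainder of the grouped set is automatically of the bilinear form), while keeping the grouped set defined by a \emph{rectangle} condition in $(n,\text{free prime})$ so that the lemma applies. This is what the paper does: it takes the smallest prime $p_1$ as the free prime, confines it to multiplicatively short intervals $l\le p_1<(1+\epsilon)l$ with $\epsilon=(\log N')^{-1/6}$, relaxes $p_1p_2p_3\le N'$ to $lp_2p_3<N'$ (an overcount of relative size $O(\epsilon)$, absorbed into the error term), and applies Theorem~\ref{Jurkat} and then Lemma~\ref{bilinear_form_inequality} with $n=p_2p_3$ to each of the $\ll\epsilon^{-1}\log N'$ sets $B^{(l)}$. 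Note that your dyadic blocks $n\in[M,2M)$ would face the rectangle problem in a second form even if the absolute-value placement were fixed: the true range $\ell\le N'/n$ differs from $\ell<N'/M$ by up to a factor of $2$ across the block, a discrepancy proportional to the block's contribution rather than $o(1)$ of it; this is the other reason intervals of ratio $1+(\log N')^{-1/6}$, not $2$, are needed.
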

\begin{proof}
Recall that on p.~\pageref{def_B}, we defined
\[
B = \left\{ 2^{k(N)} p_1 p_2 p_3 + b : z \leq p_1 < y \leq p_2 \leq p_3,\ p_1 p_2 p_3 \leq N' \right\}.
\]
Let
\[
\epsilon := (\log N')^{-1/6}.
\]
Let the prime \( p_1 \) range over pairwise disjoint intervals of the form
\[
l \leq p_1 < (1+\epsilon)l,
\]
where \( l \) is of the form
\[
l = (1+\epsilon)^k z.
\]
Then
\[
0 \leq k \leq \frac{\log(y/z)}{\log(1+\epsilon)} \ll \frac{\log N'}{\epsilon}.
\]
Define
\[
B^{(l)} := \left\{ 2^{k(N)} p_1 p_2 p_3 + b : 
\begin{array}{l}
	z \leq p_1 < y  \leq p_2 \leq p_3, \\
	l \leq p_1 < (1+\epsilon)l,\ \text{and } lp_2 p_3 < N'
\end{array}
\right\},
\]
and let
\[
\tilde{B} := \bigcup_l B^{(l)}.
\]
Then
\[
B \subseteq \tilde{B} \subseteq \left\{ 2^{k(N)} p_1 p_2 p_3 + b : 
z \leq p_1 < y \leq p_2 \leq p_3,\ p_1 p_2 p_3 \leq (1+\epsilon)N' \right\}.
\]
Moreover,
\[
|\tilde{B}| = \sum_l |B^{(l)}|, \quad
S(B, \mathcal{P}, y) \leq S(\tilde{B}, \mathcal{P}, y) = \sum_l S(B^{(l)}, \mathcal{P}, y).
\]
Define the level of distribution
\[
D := \frac{N'^{1/2}}{(\log N')^6}.
\]
Applying the linear sieve (Theorem~\ref{Jurkat}) to \( B^{(l)} \) gives
\[
S(B^{(l)}, \mathcal{P}, y) 
\leq \left(F(s) + O\left((\log D)^{-1/6}\right)\right) |B^{(l)}| V(y) 
+ O\left( \sum_{\substack{d < D \\ d \mid P(y)}} |r_d^{(l)}| \right),
\]
where  
\[
r_d^{(l)} = \left| B_d^{(l)} \right| - \frac{|B^{(l)}|}{\varphi(d)},
\]
and
\[
s = \frac{\log D}{\log y} = \frac{3}{2} + O\left( \frac{\log \log N'}{\log N'} \right) \in (1, 3]. 
\]
Hence, by Lemma~\ref{initial_value},
\[
F(s) = \frac{4e^\gamma}{3} + O\left( \frac{\log \log N'}{\log N'} \right).
\]
Since
\[
\frac{V(y)}{V(z)} = 3\delta + O\left( \frac{1}{\log N'} \right),
\]
summing over \( l \) yields
\[
S(B, \mathcal{P}, y) \leq \sum_l S(B^{(l)}, \mathcal{P}, y) 
\leq \left( 4\delta e^\gamma + O\left((\log D)^{-1/6} \right) \right) |\tilde{B}| V(z) 
+ O\left( \sum_l \sum_{\substack{d < D \\ d \mid P(y)}} |r_d^{(l)}| \right).
\]

For $d\mid P(y)$, we have 
\begin{align*}
	r_d^{(l)} 
	&:= |B_d^{(l)}| - \frac{|B^{(l)}|}{\varphi(d)} \\
	&= \sum_{\substack{
			2^{k(N)} p_1 p_2 p_3 \equiv -b \pmod{d} \\
			z \leq p_1 < y \leq p_2 \leq p_3 \\
			l \leq p_1 < (1+\epsilon)l,\,lp_2 p_3 < N' \\	
	}} 1
	\ - \ \frac{1}{\varphi(d)} 
	\sum_{\substack{
			z \leq p_1 < y \leq p_2 \leq p_3 \\
			l \leq p_1 < (1+\epsilon)l \\
			lp_2 p_3 < N'
	}} 1.
\end{align*}
We can split the second sum into two cases: one where \( (p_1 p_2 p_3, d) = 1 \), which is equivalent to \( (p_1, d) = 1 \), and another where \( (p_1, d) > 1 \).
The contribution from the terms with \( (p_1, d) > 1 \) is bounded by
\begin{align*}
	\frac{1}{\varphi(d)} \sum_{\substack{p_1 p_2 p_3 \leq (1+\epsilon)N' \\ p_1 \mid d,\ p_1 \geq z \\ y < p_2 \leq N'}} 1 
	&\leq \frac{1}{\varphi(d)} \sum_{\substack{p_1 \mid d \\ p_1 \geq z}} \sum_{y < p_2 \leq N'} \pi\left( \frac{(1+\epsilon)N'}{p_1 p_2} \right) \\
	&\ll \frac{(1+\epsilon)N'}{\varphi(d)} \sum_{\substack{p_1 \mid d \\ p_1 \geq z}} \frac{1}{p_1} \\
	&\leq \frac{(1+\epsilon)N' \nu(d)}{z \varphi(d)} \\
	&\ll \frac{N'^{1 - \delta} \log d}{\varphi(d)}.
\end{align*}

Let \( a(n) \) be the characteristic function of integers \( n = p_2 p_3 \), where \( y \leq p_2 \leq p_3 \). Then,
\begin{equation}\label{eq_r_d}
	r_d^{(l)} 
	= \sum_{n < X} \sum_{\substack{Z \leq p < Y \\ np \equiv -b \cdot (2^{k(N)})^{-1} \!\!  \pmod{d}}} a(n) 
	- \frac{1}{\varphi(d)} \sum_{n < X} \sum_{\substack{Z \leq p < Y \\ (np, d) = 1}} a(n) 
	+ O\left( \frac{N'^{1 - \delta} \log d}{\varphi(d)} \right),
\end{equation}
where \( (2^{k(N)})^{-1} \) denotes the multiplicative inverse of \( 2^{k(N)} \) modulo \( d \), and
\[
X := \frac{N'}{l}, \quad 
Z := \max(z, l), \quad 
Y := \min(y, (1 + \epsilon) l).
\]
Note that since \( d \mid P(y) \), we have \( (d, 2b) = 1 \), so the inverse \( (2^{k(N)})^{-1} \bmod d \) exists and satisfies \( \left( d, b \cdot (2^{k(N)})^{-1} \right) = 1 \).
			Moreover, $$D^*   =\frac{(XY)^{1/2}}{(\log Y)^6}\geq  \frac{N'^{1/2}\min(\frac{y}{l}, 1+\epsilon )^{\frac{1}{2}}}{(\log y)^6} \geq \frac{N'^{\frac{1}{2}}}{(\log N')^6}\geq D . $$ Thus, we apply Theorem~\ref{bilinear_form_inequality} to \eqref{eq_r_d} with \( A = 6 \), obtaining
			\begin{align*}
				\sum_{\substack{d < D \\ d \mid P(y)}} \left| r_d^{(l)} \right|
				&\leq \sum_{\substack{d < D^* \\ d \mid P(y)}} \left| r_d^{(l)} \right| \\
				&\ll \frac{XY (\log XY)^2}{(\log Y)^6} 
				+ \sum_{\substack{d < D^* \\ d \mid P(y)}} \frac{N'^{1 - \delta} \log d}{\varphi(d)} \\
				&\ll \frac{N'}{(\log N')^4} + N'^{1 - \delta} (\log D^*)^2 
				\ll \frac{N'}{(\log N')^4}.
			\end{align*}	
			Therefore,
			\[
			\sum_l \sum_{\substack{d < D \\ d \mid P(y)}} \left| r_d^{(l)} \right|
			\ll \frac{\log N'}{\epsilon} \sum_{\substack{d < D \\ d \mid P(y)}} \left| r_d^{(l)} \right|
			\ll \frac{N'}{(\log N')^{3 - 1/6}}.
			\]
		
We now estimate \( |\tilde{B}| \). Observe that
\begin{align*}
	|\tilde{B}| 
	&= \sum_{\substack{z \leq p_1 < y \leq p_2 \leq p_3 \\ p_1 p_2 p_3 \leq (1 + \epsilon) N'}} 1 \\
	&\leq  \sum_{\substack{z \leq p_1 < y \leq p_2 \\ p_1 p_2^2 \leq (1 + \epsilon) N'}} \pi\left( \frac{(1 + \epsilon) N'}{p_1 p_2} \right) \\
	&\leq (1 + 2\epsilon) N' \sum_{z \leq p_1 < y} \frac{1}{p_1} \sum_{y \leq p_2 < \left( \frac{(1 + \epsilon) N'}{p_1} \right)^{1/2}} \frac{1}{p_2 \log \left( \frac{N'}{p_1 p_2} \right)}.
\end{align*}
This double sum can be approximated by the double integral
\begin{align*}
	\sum_{z \leq p_1 < y} \frac{1}{p_1} \sum_{y \leq p_2 < \left( \frac{(1+\epsilon)N'}{p_1} \right)^{1/2}} \frac{1}{p_2 \log\left( \frac{N'}{p_1 p_2} \right)} 
	= \int_{z}^{y} \int_{y}^{\left( \frac{N'}{u} \right)^{1/2}} \frac{1}{\log(N' / ut)} \, d(\log \log t) \, d(\log \log u) 
	+ O\left( \frac{1}{(\log N')^2} \right).
\end{align*}
Using the change of variables \( t = N'^{\alpha} \), \( u = N'^{\beta} \), the integral becomes
\begin{align}\label{integral}
	\int_{z}^{y} \int_{y}^{\left( \frac{N'}{u} \right)^{1/2}} \frac{1}{\log(N' / ut)} \, d(\log \log t) \, d(\log \log u )
	&= \int_{N'^{\delta}}^{N'^{1/3}} \int_{N'^{1/3}}^{\left( \frac{N'}{u} \right)^{1/2}} \frac{1}{\log(N' / ut)} \, d(\log \log t )\, d(\log \log u) \nonumber \\
	&= \frac{1}{\log N'} \int_{\delta}^{1/3} \int_{1/3}^{\frac{1 - \beta}{2}} \frac{d\alpha \, d\beta}{\alpha \beta (1 - \alpha - \beta)} \nonumber \\
	&= \frac{C_\delta}{\log N'},
\end{align}
where
\[
C_\delta := \int_{\delta}^{1/3} \frac{\log(2 - 3\beta)}{\beta(1 - \beta)} \, d\beta.
\]		

Therefore,
\[
|\tilde{B}| \leq \frac{C_\delta N'}{\log N'} (1 + 2\epsilon) + O\left( \frac{N'}{(\log N')^2} \right),
\]
and hence, for sufficiently large $N$, 
\begin{align*}
	S(B, \mathcal{P}, y) 
	&\leq \left( 4\delta e^\gamma + O\left( (\log D)^{-1/6} \right) \right) |\tilde{B}| V(z) 
	+ O\left( \frac{N'}{(\log N')^{3 - 1/6}} \right) \\
	&\leq \left( 4\delta e^\gamma C_\delta + O\left( (\log N')^{-1/6} \right) \right) \frac{N' V(z)}{\log N'}.
\end{align*}

\end{proof}

\section{Application of Richert's Weight: \( p - b \) Divisible by Larger Powers of \( 2 \)}\label{section_weighted_sieve}

The constraint on the upper bound for \( 2^{k(N)} \) in the previous section arises from the choice of parameters \( a \) and \( \delta \) in the linear sieve with Chen's weight. In this section, we apply Richert's weights in the linear sieve framework \cite[Theorem 9.1]{halberstam}, which allows us to accommodate larger values of \( 2^{k(N)} \) while permitting more odd prime divisors.

We follow the weighted sieve formulation of Halberstam and Richert \cite[Chapter~9]{halberstam}, which provides explicit expressions for the lower bound coefficients in the linear sieve weight function. This allows us to apply the result directly, without requiring    numerical computations.

We begin with a version of \cite[Lemma 3.5]{halberstam} adapted to moduli with large power factors, where the Bombieri--Vinogradov theorem is replaced by Theorem~\ref{guo}.
 
\begin{lemma}\label{Lemma_guo}
	Let \( h > 0 \) and \( a \geq 2 \) be integers. Suppose \( q \) is a power of \( a \) satisfying
	\begin{equation}\label{condition_q}
		q \leq x^{2/5} \exp\left( -(\log \log x)^3 \right).
	\end{equation}
	Define
	\begin{equation}\label{def_E}
			E(x, d) := \max_{2 \leq y \leq x} \max_{(l,d) = 1} \left| \pi(y, d, l) - \frac{\operatorname{li}(y)}{\varphi(d)} \right|.
	\end{equation}
	Then, for any positive constant \( U \), there exists a constant \( C_1 = C_1(U, h) > 0 \) such that
	\[
	\sum_{d \leq \frac{x^{1/2}}{q (\log x)^{C_1}}} \mu^2(d) h^{2\nu(d)} E(x, dq)
	= O_{U, h} \left( \frac{x}{\varphi(q) (\log x)^U} \right).
	\]
\end{lemma}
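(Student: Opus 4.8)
The plan is to run the proof of \cite[Lemma 3.5]{halberstam} essentially verbatim, feeding in Theorem~\ref{guo} in place of the Bombieri--Vinogradov theorem. Write $D' := x^{1/2} q^{-1} (\log x)^{-C_1}$, with $C_1$ to be fixed at the end, and let $A > 0$ be a parameter to be chosen later. I will prove the estimate for the part of the sum with $(d,q)=1$; this is the only range that intervenes in the weighted sieve of Section~\ref{section_weighted_sieve}, where $d$ runs over divisors of $P(z)$ and $P(z)$ is coprime to $2b$, hence to $q = 2^{k(N)}$. For such $d$ one has $dq \le x^{1/2}(\log x)^{-C_1} \le x^{1/2}$ and $\varphi(dq) = \varphi(d)\varphi(q)$. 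First I record the trivial bound $E(x,dq) \ll x/\varphi(dq)$, which needs nothing deep: for $2 \le y \le x$ the residue class $l \bmod dq$ contains at most $y/(dq) + 1 \le x/\varphi(dq) + 1$ integers, so $\pi(y,dq,l) \ll x/\varphi(dq)$, and $\operatorname{li}(y)/\varphi(dq) \le \operatorname{li}(x)/\varphi(dq) \ll x/\varphi(dq)$.

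Next I apply Cauchy--Schwarz (using $\mu^2(d)^2 = \mu^2(d)$) in the form
\[
\sum_{\substack{d \le D' \\ (d,q) = 1}} \mu^2(d)\, h^{2\nu(d)}\, E(x,dq)
\le \left( \sum_{\substack{d \le D' \\ (d,q) = 1}} \mu^2(d)\, h^{4\nu(d)}\, E(x,dq) \right)^{1/2}
\left( \sum_{\substack{d \le D' \\ (d,q) = 1}} E(x,dq) \right)^{1/2}.
\]
In the first factor I insert the trivial bound $E(x,dq) \ll x/(\varphi(d)\varphi(q))$ together with the elementary estimate
\[
\sum_{d \le D'} \frac{\mu^2(d)\, h^{4\nu(d)}}{\varphi(d)}
\le \prod_{p \le D'} \left( 1 + \frac{h^4}{p-1} \right) \ll (\log x)^{h^4},
\]
so that factor is $\ll \big( x\,\varphi(q)^{-1} (\log x)^{h^4} \big)^{1/2}$. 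In the second factor the summand $E(x,dq)$ is precisely $\max_{(r,qd)=1} \max_{y \le x} \big| \pi(y, qd, r) - \operatorname{li}(y)/\varphi(qd) \big|$; the modulus $q$ is a power of $a$ lying in the admissible range \eqref{condition_q}, and $D' \le x^{1/2} q^{-1} (\log x)^{-B(A)}$ as soon as $C_1 \ge B(A)$, so Theorem~\ref{guo} gives that factor $\ll \big( x\,\varphi(q)^{-1} (\log x)^{-A} \big)^{1/2}$.

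Multiplying the two bounds yields $\sum_{d \le D',\,(d,q)=1} \mu^2(d) h^{2\nu(d)} E(x,dq) \ll x\,\varphi(q)^{-1} (\log x)^{(h^4 - A)/2}$. Taking $A := h^4 + 2U$ gives the asserted bound $O_{U,h}\big( x\,\varphi(q)^{-1} (\log x)^{-U} \big)$, and one may then set $C_1 := B(h^4 + 2U)$, which by the remark after Theorem~\ref{elliott} may be taken to be $h^4 + 2U + 6$; this exhibits the dependence $C_1 = C_1(U,h)$. Everything here is routine once Theorem~\ref{guo} is available: a crude count for the trivial bound, a Mertens-type product for the divisor sum, and one application of Cauchy--Schwarz. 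I expect the one point genuinely needing attention to be the restriction to $(d,q)=1$ rather than any analytic difficulty: the terms with $(d,q) > 1$ are moduli not of the form handled by Theorem~\ref{guo}, and bounding them by the trivial estimate alone would contribute a factor $(\log x)^{h^2}$ rather than a power saving; so the lemma is to be understood, and is used in Section~\ref{section_weighted_sieve}, with $d$ ranging over integers coprime to $q$.
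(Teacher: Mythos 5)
Your proposal is correct and follows essentially the same route as the paper, whose proof simply invokes the argument of \cite[Lemma 3.5]{halberstam} (trivial bound, Cauchy--Schwarz, and the level-of-distribution input, here Theorem~\ref{guo}) without further detail. Your remark that the terms with $(d,q)>1$ fall outside the scope of Theorem~\ref{guo} and that the lemma is only needed, and only proved, for $d$ coprime to $q$ is a fair clarification of a point the paper's statement leaves implicit; it is harmless in the application of Section~\ref{section_weighted_sieve}, where $d$ runs over divisors of $P(z)$ and is automatically coprime to $2b$.
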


\begin{proof}
	The argument follows that of \cite[Lemma 3.5]{halberstam}, replacing the condition \( q \leq (\log x)^A \) with the bound \eqref{condition_q}, and using Theorem~\ref{guo} in place of the Bombieri--Vinogradov theorem.
\end{proof}

\begin{theorem}
Let \( b > 0 \) be an odd integer, and let \( \epsilon > 0 \) be a small fixed constant. Suppose \( k: \mathbb{N} \to \mathbb{N} \) is an arithmetic function satisfying
\[
2^{k(N)} \leq N^{a - \epsilon}, \quad \text{where } a = \frac{1}{6}.
\]

Let \( s_3(N  ) \) denote the number of primes \( p \leq N + b \) such that \( p - b \) is divisible by \( 2^{k(N)} \) and has at most three odd prime factors.

Define \( N' := N / 2^{k(N)} \). Then for sufficiently large $N$,  
\[	s_3(N   ) \geq \frac{40\theta}{\left( \frac{2}{5} + 6\theta \right)^2} \log 3 \cdot \prod_{p > 2} \left(1 - \frac{1}{(p - 1)^2} \right) \prod_{2 < p \mid b} \frac{p - 1}{p - 2} \cdot \frac{N'}{(\log N)(\log N')},\]
where
\[
\theta = \frac{\epsilon}{2(1 - a + \epsilon)(1 - a)}.
\]

\end{theorem}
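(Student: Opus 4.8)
The plan is to apply Richert's weighted linear sieve, in the form of \cite[Theorem 9.1]{halberstam}, to the sequence
\[
\mathcal{A} := \left\{ \frac{p - b}{2^{k(N)}} \ :\ 0 < p - b \leq N,\ 2^{k(N)} \mid p - b \right\},
\]
sifted by the set $\mathcal{P}$ of primes $>2$ coprime to $b$. As in Section~2, every odd prime dividing an element of $\mathcal{A}$ is automatically coprime to $b$ (otherwise $p\leq b$), and $2\notin\mathcal{P}$, so extra powers of $2$ in an element $m=(p-b)/2^{k(N)}$ are invisible both to the sieve and to the weight. Hence, if $m\in\mathcal{A}$ with $m\leq N'$ has at most three prime factors from $\mathcal{P}$ (counted with multiplicity), then $p=2^{k(N)}m+b\leq N+b$ is a prime with $p-b$ divisible by $2^{k(N)}$ and having at most three odd prime factors; so $s_3(N)$ is bounded below by the number of such $m$. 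Since $|\mathcal{A}_d|=\pi(N+b,\,2^{k(N)}d,\,b)+O(1)$ for $(d,2b)=1$, one reads off $g(d)=1/\varphi(d)$, so the sieve has dimension $\kappa=1$, with $X=\operatorname{li}(N+b)/\varphi(2^{k(N)})\asymp 2N'/\log N$ and $V(z)$ given by \eqref{function_V(z)}.

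First I would fix the level of distribution. The remainder is $r(d)=|\mathcal{A}_d|-X/\varphi(d)=\pi(N+b,\,2^{k(N)}d,\,b)-\operatorname{li}(N+b)/\varphi(2^{k(N)}d)+O(1)$, which is $\leq E(N+b,\,2^{k(N)}d)+O(1)$ in the notation of \eqref{def_E}. Richert's weight in \cite[Theorem 9.1]{halberstam} requires the remainders to be controlled with the extra factor $\mu^2(d)\,h^{2\nu(d)}$ for a fixed constant $h$, which is precisely the content of Lemma~\ref{Lemma_guo} applied with $q=2^{k(N)}$ and $x=N+b$ (legitimate since $2^{k(N)}\leq N^{1/6-\epsilon}$ is far below the cutoff $x^{2/5}\exp(-(\log\log x)^3)$): for any $U>0$ there is $C_1$ with
\[
\sum_{d \leq D} \mu^2(d)\, h^{2\nu(d)}\, |r(d)| \ll \frac{N}{\varphi(2^{k(N)})(\log N)^{U}} \ll \frac{N'}{(\log N)^{U}}, \qquad D := \frac{N^{1/2}}{2^{k(N)}(\log N)^{C_1}}.
\]
Since $N=N'2^{k(N)}\leq N'\,N^{a-\epsilon}$ with $a=\tfrac16$, one gets $N\leq N'^{1/(1-a+\epsilon)}$, hence $2^{k(N)/2}\leq N'^{(a-\epsilon)/(2(1-a+\epsilon))}$ and therefore $D\geq N'^{\tau-o(1)}$, where
\[
\tau = \frac{\tfrac12 - (a-\epsilon)}{1 - (a-\epsilon)} = \frac{2}{5} + \theta, \qquad \theta = \frac{\epsilon}{2(1-a+\epsilon)(1-a)}.
\]
Thus $\mathcal{A}$ is a linear sieve sequence of length $\asymp N'$ with level of distribution $N'^{2/5+\theta}$ and with $h^{2\nu(d)}$-weighted remainder under control — exactly the hypothesis needed.

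Next I would run \cite[Theorem 9.1]{halberstam} with the sifting level $z=N'^{\delta}$ and the logarithmic weight, its cutoff chosen so that the weight is nonpositive on any $m\leq N'$ having four or more prime factors in $\mathcal{P}$ (this forces the cutoff to be $\asymp N'^{1/3}$); the passage from distinct to repeated prime factors costs a negligible error $\ll N'^{1-\delta}$, estimated exactly as the $\sum_{q^k\|n,\,k\geq 2}(k-1)$ term in Section~2. The theorem then yields, after optimizing $\delta$ in the admissible range, a bound of the shape
\[
s_3(N) \geq \bigl( \text{(explicit linear weighted-sieve coefficient at level } \tfrac25 + \theta\text{)} + o(1)\bigr)\, X\, V(z),
\]
where the coefficient is read off from the explicit lower-bound formulas of \cite[Chapter 9]{halberstam} and is positive precisely because $\tau>\tfrac25$. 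Substituting $X\asymp 2N'/\log N$ and $V(z)$ from \eqref{function_V(z)} with $\log z=\delta\log N'$ (so that the factors $e^{\pm\gamma}$ cancel), the coefficient collapses to $\dfrac{40\theta}{\left(\tfrac25+6\theta\right)^{2}}\log 3$ multiplied by $\prod_{p>2}\bigl(1-(p-1)^{-2}\bigr)\prod_{2<p\mid b}\tfrac{p-1}{p-2}$, which is the asserted bound.

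The main obstacle is this final evaluation: carrying the explicit lower-bound coefficients of \cite[Chapter 9]{halberstam} through the optimization over $\delta$ and confirming that they collapse to exactly $\frac{40\theta}{(2/5+6\theta)^2}\log 3$ — in particular that $2/5$ is the level-of-distribution threshold for a $P_3$-result in this linear problem, so that the coefficient is positive and vanishes linearly in $\theta$ as $\epsilon\to 0$. Two subsidiary points need care: that the constant $h$ occurring in Richert's weight is absolute, so that Lemma~\ref{Lemma_guo} genuinely applies with that $h$; and the bookkeeping that lets arbitrary powers of $2$ remain in $(p-b)/2^{k(N)}$ without disturbing the weight (as $2\notin\mathcal{P}$) or inflating the count of odd prime factors. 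None of this is deep given Theorem~\ref{guo} and the Halberstam--Richert machinery, but getting the constant exactly right is the delicate part.
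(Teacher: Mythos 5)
Your proposal follows essentially the same route as the paper: the same sifted sequence $\mathcal{A}$, the same appeal to Lemma~\ref{Lemma_guo} to verify the remainder condition at level of distribution $\alpha = \tfrac{2}{5}+\theta$ (your $\tau$ agrees algebraically with the paper's $\tfrac12-\tfrac{a}{2(1-a)}+\theta$), and the same application of Richert's weighted sieve \cite[Theorem~9.1]{halberstam} with the multiplicity and powers-of-two bookkeeping handled as in Section~2. The one piece you defer — extracting the constant — is done in the paper by fixing $\alpha v=4$, $\alpha u=\tfrac43$ and choosing $\lambda$ slightly above $1/(K+1-u)$ so that $2v\bigl(1-\tfrac23\lambda\bigr)\log 3$ equals exactly $\frac{40\theta}{(2/5+6\theta)^2}\log 3$, confirming your anticipated value.
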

\begin{proof}
The proof follows the same general strategy as the one for the infinitude of primes \( p \) such that \( p - b \) has at most three prime factors, as presented in Halberstam and Richert~\cite[Theorem~9.2]{halberstam}, which uses a linear weighted sieve~\cite[Theorem~9.1]{halberstam}. We adopt the same notation as in~\cite{halberstam} throughout.

Let
\[
A = \left\{ \frac{p - b}{2^{k(N)}} : 0 < p - b \leq N,\ p \text{ prime},\ p \equiv b \pmod{2^{k(N)}} \right\},
\]
and define
\[
\mathcal{B}_{2b} := \left\{ q : q \text{ prime},\ q > 2,\ (q, b) = 1 \right\}.
\]
We set
\[
X = \frac{\operatorname{li}(N + b)}{\varphi(2^{k(N)})}, \quad \omega(p) = \frac{p}{p - 1} \quad \text{for } p \nmid 2b.
\]
Then, for 
\[
R_d := |A_d| - \frac{\omega(d)}{d} X,
\]
we have the bound
\[
|R_d| \leq E(N + b, 2^{k(N)} d),
\]
where \( E(N + b, 2^{k(N)} d) \) is as defined in \eqref{def_E}.

The conditions \((\Omega_1)\) \cite[p.~29]{halberstam} and \((\Omega_2(1, L))\) \cite[p.~142]{halberstam} are satisfied with
\[
L \leq O(1) + \sum_{p \mid 2b} \frac{\log p}{p - 1} \ll \log \log 6b \ll 1,
\]
as follows from Mertens' estimates; see also \cite[(1.1), (1.5), p.~143]{halberstam}.

For the condition \((R(1, \alpha))\) \cite[p.~236]{halberstam}, Lemma~\ref{Lemma_guo} implies that for a suitable constant \( C_1 \),
\[
\sum_{d \leq \frac{N^{1/2}}{2^{k(N)} (\log N)^{C_1}}} \mu^2(d)\, 3^{\nu(d)}\, R_d 
= O\left( \frac{N}{\varphi(2^{k(N)}) (\log N)^3} \right) 
= O\left( \frac{N'}{(\log N)^3} \right) 
= O\left( \frac{X}{(\log X)^2} \right).
\]
The upper bound on \( d \) in the summation satisfies
\[
\frac{N^{1/2}}{2^{k(N)} (\log N)^{C_1}} = \frac{N'^{1/2}}{2^{k(N)/2} (\log N)^{C_1}} 
\geq \frac{N'^{\frac{1}{2} - \frac{a}{2(1 - a)} + \theta}}{(\log N)^{C_1}},
\]
where
\[
\theta = \frac{\epsilon}{2(1 - a + \epsilon)(1 - a)},
\]
so that the condition \((R(1, \alpha))\) is satisfied with
\begin{equation}\label{condition_alpha}
	\alpha = \frac{1}{2} - \frac{a}{2(1 - a)} + \theta.
\end{equation}

Define the weighted sum  
\begin{equation}\label{def_weighted}
	W(A; \mathcal{B}_{2b}, v, u, \lambda) := 
	\sum_{\substack{y \in A \\ (y, P(X^{1/v})) = 1}} 
	\left( 1 - \lambda \sum_{\substack{X^{1/v} \leq p < X^{1/u} \\ p \mid y,\ p \in B_{2b}}} 
	\left( 1 - u \frac{\log p}{\log X} \right) \right).
\end{equation}

Taking the parameters
\begin{equation}\label{parameters}
	\alpha v = 4, \quad \alpha u = \frac{4}{3},
\end{equation}
and applying \cite[Theorem~9.1 and Lemma~9.1]{halberstam}, we obtain the estimate
\begin{equation}\label{weight_equation}
	W(A; \mathcal{B}_{2b}, v, u, \lambda) 
	\geq X W(X^{1/v}) \left( \frac{e^\gamma}{2} \left(1 - \frac{2}{3} \lambda \right) \log 3 
	- \frac{B_1}{(\log X)^{1/15}} \right),
\end{equation}
where \( W(z) \) is defined as
\[
W(z) := \prod_{p < z} \left(1 - \frac{\omega(p)}{p} \right),
\]
and \( B_1 \) is an absolute constant.

By Mertens’ estimate,  we express \( W(X^{1/v}) \) as
\begin{align*}
	W(X^{1/v}) = 
	2v \prod_{p > 2} \left( 1 - \frac{1}{(p - 1)^2} \right) 
	\prod_{2 < p \mid b} \frac{p - 1}{p - 2} \cdot \frac{e^{-\gamma}}{\log X} 
	\left( 1 + O\left( \frac{1}{\log X} \right) \right).
\end{align*}
Substituting into \eqref{weight_equation}, we obtain
\[W(A; \mathcal{B}_{2b}, v, u, \lambda) 
\geq   v \prod_{p > 2} \left( 1 - \frac{1}{(p - 1)^2} \right)
\prod_{2 < p \mid b} \frac{p - 1}{p - 2} \cdot \frac{X}{\log X}  \cdot
\left[ \left( 1 - \frac{2}{3} \lambda \right) \log 3 
- \frac{B_2}{(\log X)^{1/15}} \right],\]
where \( B_2 \) is an absolute constant.
Since
\[
X = \frac{\operatorname{li}(N + b)}{\varphi(2^{k(N)})} \sim \frac{2N'}{\log N},
\]
we can further write
\begin{align}\label{weight_final_form_2}
	W(A; \mathcal{B}_{2b}, v, u, \lambda) 
	\geq {} & v \prod_{p > 2} \left( 1 - \frac{1}{(p - 1)^2} \right)
	\prod_{2 < p \mid b} \frac{p - 1}{p - 2} \cdot \frac{2N'}{(\log N)(\log N')} \nonumber \\
	& \cdot \left[ \left( 1 - \frac{2}{3} \lambda \right) \log 3 
	- \frac{B_{3}}{(\log N')^{1/15}} \right],
\end{align}
where \( B_3 \) is an absolute constant.

To establish a result for elements with at most \( K \) distinct odd prime factors, we require that elements \( y \in S(A, X^{1/v}) \) with more than \( K \) distinct odd prime divisors contribute non-positively to the weight function \eqref{def_weighted}.
 This leads to the condition 
\begin{equation}\label{condition_K}
	1 - \lambda \left( K + 1 - u \cdot \frac{\log N'}{\log X} \right) \leq 0.
\end{equation}

Moreover, as shown in \cite[p.~250--251]{halberstam}, the contribution from elements \( y \in S(A, X^{1/v}) \) divisible by \( p^2 \), where \( X^{1/v} \leq p < X^{1/u} \), can be sufficiently bounded. Elements \( a \) divisible by \( p^2 \) with \( p \geq X^{1/u} \) always contribute negatively to the weights \( 1 - u \frac{\log p}{\log X} \). Therefore, if we additionally count the number of prime divisors of \( a \) with multiplicity, the corresponding term in \eqref{def_weighted} can only increase and is at most
\[
1 - \lambda \left( \Omega(a) + 1 - u \cdot \frac{\log |a|}{\log X} \right).
\]
Hence, the difference between counting prime factors with or without multiplicity is negligible, and the same condition \eqref{condition_K} ensures the validity of the result for elements with at most \( K \) (not necessarily distinct) odd prime factors.

To ensure that \eqref{condition_K} holds, it suffices to require
\begin{equation}\label{condition_lambda_u}
	1 - \lambda (K + 1 - u) < 0.
\end{equation}

Furthermore, to guarantee a positive lower bound in \eqref{weight_final_form_2}, we must also have 
\begin{equation}\label{condition_lambda}
	1 - \frac{2}{3} \lambda > 0.
\end{equation}

By \eqref{condition_alpha}, \eqref{parameters}, \eqref{condition_lambda_u}, and \eqref{condition_lambda}, we obtain the condition
\[
1 - \frac{1}{\frac{6K - 12}{3K - 2} + \theta} < a < 1 - \frac{1}{\frac{6K - 6}{3K + 1} + \theta}.
\]
It suffices to choose a value of \( a \) that satisfies this inequality for all sufficiently small \( \theta > 0 \), while also maximizing \( a \). We take
\begin{equation}\label{relation_a_and_K}
	a = 1 - \frac{1}{\frac{6K - 6}{3K + 1}} = \frac{3K - 7}{6K - 6}.
\end{equation}

Fixing \( K = 3 \), we have \( a = \frac{1}{6} \), so \( \alpha = \frac{2}{5} + \theta \). The parameters in \eqref{parameters} then become
\[
v = \frac{4}{\alpha} = \frac{4}{\frac{2}{5} + \theta}, \quad 
u = \frac{4}{3\alpha} = \frac{4}{3\left(\frac{2}{5} + \theta\right)}.
\]
With these choices of \( u \) and \( v \), and for any \( \lambda \) satisfying
\[
  \lambda > \frac{1}{K + 1 - u} = \frac{6/5 + 3\theta}{4/5 + 12\theta},
\]  we obtain the lower bound
\[	s_3(N) 
\geq W(A; \mathcal{B}_{2b}, v, u, \lambda) + O\left( N'^{1 - \frac{1}{v}} \log N' \right),\]
where the error term arises from bounding the contribution of elements divisible by \( p^2 \) for \( X^{1/v} \leq p < X^{1/u} \); see, for example, equation (2.6) in \cite[Theorem~9.2]{halberstam}.

When
\[
\lambda = \frac{1}{K + 1 - u} = \frac{6/5 + 3\theta}{4/5 + 12\theta},
\]
the constant in the main term of \eqref{weight_final_form_2} becomes
\[
2v\left(1 - \frac{2}{3} \lambda\right) \log 3 
= \frac{40\theta}{\left(\frac{2}{5} + \theta\right)\left(\frac{2}{5} + 6\theta\right)} \log 3 
> \frac{40\theta}{\left(\frac{2}{5} + 6\theta\right)^2} \log 3>0. 
\]
 Therefore, by choosing a slightly larger \( \lambda > \frac{6/5 + 3\theta}{4/5 + 12\theta} \) so that   \[2v\left(1 - \frac{2}{3} \lambda\right) \log 3 = 
 \frac{40\theta}{\left(\frac{2}{5} + 6\theta\right)^2} \log 3  ,\] we have  the desired lower bound.
\end{proof}
\begin{remark}
	A similar result holds for \( n_3(N) \), counting primes \( p \) with \( p - b \leq N \) and  \(  p - b =2^{k(N)} P_3 \). This can be achieved   by replacing the sifting set with
	\[
	A = \left\{ 
	\frac{p - b}{2^{k(N)}} : p - b \leq N,\ 2^{k(N)} \mid\mid (p - b)
	\right\}.
	\]
\end{remark}

Let \( s_K(N  ) \) denote the number of primes \( p \leq N + b \) such that \( p - b \) is divisible by \( 2^{k(N)} \) and has at most $K$ odd prime factors.

The relation \eqref{relation_a_and_K} applies for all \( K \) such that \( a \leq \frac{2}{5} \), as required by Lemma~\ref{Lemma_guo}. We thus obtain the following.

\begin{cor}
Let \( b > 0 \) be an odd integer, and let \( \epsilon > 0 \) be a small fixed constant. Suppose \( k: \mathbb{N} \to \mathbb{N} \) is an arithmetic function satisfying
\[
2^{k(N)} \leq N^{a - \epsilon}, \quad \text{for some } 0<a <1.
\]

Define \( N' := N / 2^{k(N)} \). Then for \( K = 4, 5, 6, 7 \), and all sufficiently large \( N \), we have
\begin{equation}\label{s_7}
	s_K(N  ) \geq \frac{8\theta(1 + 3K)^3\log 3 }{\left[8 + 3\theta(1 + 4K + 3K^2)\right]^2}   \prod_{p > 2} \left(1 - \frac{1}{(p - 1)^2}\right) \prod_{2 < p \mid b} \frac{p - 1}{p - 2} \cdot \frac{N'}{(\log N)(\log N')},
\end{equation}
where
\[
\theta = \frac{\epsilon}{2(1 - a + \epsilon)(1 - a)}, \quad \text{and} \quad a = \frac{3K - 7}{6K - 6}.
\]
The corresponding values of \( a \) for each \( K \) are given below:

\begin{center}
	\begin{tabular}{|c|c|c|c|c|}
		\hline
		\( K \) & 4 & 5 & 6 & 7 \\
		\hline
		\( a \) & \( \dfrac{5}{18} \) & \( \dfrac{1}{3} \) & \( \dfrac{11}{30} \) & \( \dfrac{7}{18} \) \\
		\hline
	\end{tabular}
\end{center}

For \( K = 8 \), suppose instead that
\[
2^{k(N)} \leq N^{2/5} \exp\left(-(\log \log N)^3\right),
\]
then for all sufficiently large \( N \), we have 
\begin{equation}\label{s_8_weighted_sieve_result}
	s_8(N  ) \geq \frac{52}{3} \cdot \prod_{p > 2} \left(1 - \frac{1}{(p - 1)^2}\right) \prod_{2 < p \mid b} \frac{p - 1}{p - 2} \cdot \frac{N'}{(\log N)(\log N')}.
\end{equation}
\end{cor}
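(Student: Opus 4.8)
The plan is to re-run, with the number $K$ of admissible odd prime factors left as a parameter, the weighted-sieve argument just used for $s_3(N)$ — itself the scheme of \cite[Theorem~9.2]{halberstam} with the Bombieri--Vinogradov input replaced by Lemma~\ref{Lemma_guo}. First I would reuse the same data: the sifting set
\[
A=\left\{\frac{p-b}{2^{k(N)}}:0<p-b\le N,\ p\equiv b\pmod{2^{k(N)}}\right\},
\]
the density $\omega(p)=p/(p-1)$ for $p\nmid 2b$, the quantity $X=\operatorname{li}(N+b)/\varphi(2^{k(N)})\sim 2N'/\log N$, and the weighted sum $W(A;\mathcal B_{2b},v,u,\lambda)$ of \eqref{def_weighted} with $\alpha v=4$ and $\alpha u=4/3$. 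Conditions $(\Omega_1)$ and $(\Omega_2(1,L))$ hold with $L\ll1$ verbatim, and Lemma~\ref{Lemma_guo} gives $(R(1,\alpha))$ with $\alpha=\tfrac12-\tfrac{a}{2(1-a)}+\theta$ — but only for moduli in the range that lemma allows, i.e.\ essentially for $a\le\tfrac25$. Combining \eqref{condition_alpha}, \eqref{parameters}, \eqref{condition_lambda_u} and \eqref{condition_lambda} reproduces the admissible window
\[
1-\frac1{\frac{6K-12}{3K-2}+\theta}<a<1-\frac1{\frac{6K-6}{3K+1}+\theta},
\]
and I would take $a=\tfrac{3K-7}{6K-6}$, its supremum as $\theta\to0^+$.

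For $K\in\{4,5,6,7\}$ I would first note that $a=\tfrac{3K-7}{6K-6}$ equals $\tfrac5{18},\tfrac13,\tfrac{11}{30},\tfrac7{18}$, all $\le\tfrac25$, so Lemma~\ref{Lemma_guo} applies under the hypothesis $2^{k(N)}\le N^{a-\epsilon}$ with $\theta=\tfrac{\epsilon}{2(1-a+\epsilon)(1-a)}>0$. A direct computation then gives $\alpha=\tfrac4{3K+1}+\theta$, hence
\[
v=\frac{4(3K+1)}{4+\theta(3K+1)},\qquad u=\frac v3,\qquad K+1-u=\frac{8+3\theta(1+4K+3K^2)}{3\bigl(4+\theta(3K+1)\bigr)}.
\]
Choosing $\lambda$ just above $\tfrac1{K+1-u}$ makes \eqref{condition_K} hold (so, via the $p^2$-divisor analysis of \cite[p.~250--251]{halberstam}, elements with more than $K$ odd prime factors — counted with or without multiplicity — contribute non-positively) while $\lambda<\tfrac32$ keeps \eqref{condition_lambda}. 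Since at $\lambda=\tfrac1{K+1-u}$ one gets $1-\tfrac23\lambda=\tfrac{\theta(3K+1)^2}{8+3\theta(1+4K+3K^2)}$, the constant in \eqref{weight_final_form_2} is
\[
2v\bigl(1-\tfrac23\lambda\bigr)\log 3=\frac{8\theta(3K+1)^3\log 3}{\bigl(4+\theta(3K+1)\bigr)\bigl(8+3\theta(1+4K+3K^2)\bigr)}>\frac{8\theta(1+3K)^3\log 3}{\bigl(8+3\theta(1+4K+3K^2)\bigr)^2},
\]
using $4+\theta(3K+1)<8+3\theta(1+4K+3K^2)$; since the $p^2$-error $O(N'^{1-1/v}\log N')$ is $o\bigl(N'/((\log N)(\log N'))\bigr)$, this yields \eqref{s_7}. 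For $K=8$ the value $a=\tfrac{17}{42}$ exceeds $\tfrac25$, so the $N^{a-\epsilon}$ hypothesis is unavailable; instead I would use $2^{k(N)}\le N^{2/5}\exp(-(\log\log N)^3)$, which sits exactly at the boundary of Lemma~\ref{Lemma_guo}. This plays the role of $a=\tfrac25$ with $\theta\to0^+$ (and $\tfrac25$ does lie in the window above for $K=8$), so $\alpha\to\tfrac16$, $v\to24$, $u\to8$, $K+1-u\to1$, $\lambda\to1$, $1-\tfrac23\lambda\to\tfrac13$, and the constant tends to $16\log 3>\tfrac{52}{3}$; for $N$ large, picking $\lambda$ marginally above $1$ gives \eqref{s_8_weighted_sieve_result}.

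The place I expect to have to be most careful is the pair of admissibility thresholds. One must confirm that $\tfrac{3K-7}{6K-6}\le\tfrac25$ holds exactly up to $K=7$ — which is why $K=8$ forces the stronger hypothesis on $2^{k(N)}$ and why $K\ge9$ falls outside the method — and that the $\lambda$-interval $\bigl[\tfrac1{K+1-u},\tfrac32\bigr)$ is non-empty, equivalently $K+1-u>\tfrac23$, which the closed form for $K+1-u$ above makes immediate. Everything else is the line-for-line bookkeeping of the $s_3$ proof; the only real work is tracking the $K$-dependence of $v,u,\lambda$ and carrying out the elementary simplification of $2v\bigl(1-\tfrac23\lambda\bigr)$.
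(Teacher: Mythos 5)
Your proposal is correct and follows essentially the same route as the paper's own proof: the same choice $a=\tfrac{3K-7}{6K-6}$ giving $\alpha=\tfrac{4}{3K+1}+\theta$, the same computation of $v$, $u$, $K+1-u$ and of the constant $2v(1-\tfrac23\lambda)\log 3$ at $\lambda=\tfrac1{K+1-u}$ (with the identical final inequality via $4+\theta(3K+1)<8+3\theta(1+4K+3K^2)$), and the same switch to $2^{k(N)}\le N^{2/5}\exp(-(\log\log N)^3)$, $\alpha=\tfrac16$, $\lambda$ just above $1$ for $K=8$. Your explicit checks that $\tfrac{3K-7}{6K-6}\le\tfrac25$ exactly for $K\le 7$ and that the $\lambda$-window is non-empty are sound and only make explicit what the paper leaves implicit.
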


\begin{proof}
This follows from the previous theorem by adjusting the sieve parameters. Let
\[
a = \frac{3K - 7}{6K - 6}, \quad \text{so that} \quad \alpha = \frac{4}{3K + 1} + \theta,
\]
where
\[
\theta = \frac{\epsilon}{2(1 - a + \epsilon)(1 - a)}.
\]
We seek a suitable \( \lambda \) satisfying
\[
  \lambda > \frac{1}{K + 1 - u} = \frac{1}{K + 1 - \frac{4}{3\alpha}}.
\]
If we take equality  \( \lambda = \frac{1}{K + 1 - \frac{4}{3\alpha}} \), then the constant in the main term of \eqref{weight_final_form_2} becomes
\[
2v \left(1 - \frac{2}{3} \lambda \right) \log 3
= \frac{8\theta(1 + 3K)^3}{(4 + \theta + 3\theta K)(8 + 3\theta(1 + 4K + 3K^2))} \log 3
> \frac{8\theta(1 + 3K)^3}{(8 + 3\theta(1 + 4K + 3K^2))^2} \log 3.
\]
Therefore, by choosing a slightly larger value \( \lambda > \frac{1}{K + 1 - \frac{4}{3\alpha}} \) so that
\[
2v \left(1 - \frac{2}{3} \lambda \right) \log 3 
= \frac{8\theta(1 + 3K)^3}{(8 + 3\theta(1 + 4K + 3K^2))^2} \log 3,
\]
we obtain the desired lower bound \eqref{s_7}, valid for \( K \leq 7 \), where \( a = \frac{3K - 7}{6K - 6} < \frac{2}{5} \).

For \( K = 8 \), we have \( \frac{3K - 7}{6K - 6} > \frac{2}{5} \), so instead we take
\[
2^{k(N)} \leq N^{2/5} \exp\left( -(\log \log N)^3 \right),
\]
as required for the power modulus upper bound in Lemma~\ref{Lemma_guo}. In this case, the condition \( (R(1, \alpha)) \) is satisfied with
\[
\alpha = \frac{1}{6}.
\]
We now seek a parameter \( \lambda \) satisfying
\[
 \lambda > \frac{1}{K + 1 - \frac{4}{3\alpha}} = 1.
\]
When \( \lambda = 1 \), the constant in the main term of \eqref{weight_final_form_2} becomes
\[
2v \left(1 - \frac{2}{3} \lambda \right) \log 3 = 16 \log 3 > \frac{52}{3}.
\]
Hence, by choosing a slightly larger \( \lambda > 1 \) such that
\[
2v \left(1 - \frac{2}{3} \lambda \right) \log 3 = \frac{52}{3},
\]
we obtain the desired lower bound \eqref{s_8_weighted_sieve_result}.
\end{proof}

	\bibliographystyle{plain}

\end{document}